\def\ps@pprintTitle{%
 \let\@oddhead\@empty
 \let\@evenhead\@empty
 \def\@oddfoot{}%
 \let\@evenfoot\@oddfoot}
\def\input@path{{./Images/}{./}}
\newtheorem{theorem}{Theorem}
\newtheorem{lemma}[theorem]{Lemma}
\newtheorem{proposition}[theorem]{Proposition}
\newtheorem{corollary}[theorem]{Corollary}
\newtheorem*{theoremnonum}{Theorem}
\theoremstyle{definition}
\newtheorem{definition}[theorem]{Definition}
\theoremstyle{remark}
\newtheorem{remark}[theorem]{Remark}
\newtheorem{question}[theorem]{Question}
\newtheorem{example}[theorem]{Example}
\numberwithin{theorem}{section} 
\newcommand\TTmin{\mathbb{T}_{\min}}
\newcommand\TTmax{\mathbb{T}_{\max}}
\newcommand\TTclosed{\overline{\mathbb{T}}}
\newcommand\TP{\mathbb{TP}}
\newcommand\TPmax{\mathbb{TP}_{\max}}
\newcommand\puiseux[2]{{#1}\{\!\{{#2}\}\!\}}
\newcommand\SetOf[2]{\left\{\left.#1\vphantom{#2}\ \right|\ #2\vphantom{#1}\right\}}
\newcommand\IdealOf[2]{\left\langle\left.#1\vphantom{#2}\ \right|\ #2\vphantom{#1}\right\rangle}
\newcommand\tcone[1]{\operatorname{tcone}(#1)}
\newcommand\tconv[1]{\operatorname{tconv}(#1)}
\newcommand\clos[1]{\operatorname{cl}(#1)}
\newcommand\monomial[1]{{\sf M}(#1)}
\newcommand\imonomial[2]{{\sf M}_{#1}(#2)}
\newcommand\closedmonomial[1]{\overline{\sf M}(#1)}
\newcommand\complementarymonomial[1]{\rotatebox[origin=c]{180}{\sf M}(#1)}
\newcommand\closedcomplementarymonomial[1]{\overline{\rotatebox[origin=c]{180}{\sf M}}(#1)}
\newcommand\monocone[2]{{\sf MC}_{#1}(#2)}
\DeclareMathOperator{\supp}{supp}
\newcommand\KK{{\mathbb K}}
\DeclareMathOperator\val{val}
\DeclareMathOperator\tbary{tbary}
\newcommand\RR{{\mathbb R}}
\newcommand\ZZ{{\mathbb Z}}
\newcommand{\NN}{\mathbb N}
\newcommand{\1}{\bf 1}
\DeclareMathOperator{\lcm}{lcm}
\newcommand\maxunit{\mathcal{E}_{\max}}
\newcommand\minunit{\mathcal{E}_{\min}}
\newcommand\homogmaxunit{\widehat{\mathcal{E}}_{\max}}
\newcommand\modmaxunit{\overline{\mathcal{E}}_{\max}}
\newcommand\modminunit{\overline{\mathcal{E}}_{\min}}
\newcommand\neighbour{\mathcal{N}}
\DeclareMathOperator{\conv}{conv}
\DeclareMathOperator{\cone}{cone}
\DeclareMathOperator{\hull}{hull}
\DeclareMathOperator{\im}{im}
\DeclareMathOperator{\sign}{sign}
\newcommand\cB{{\mathcal B}}
\newcommand\cC{{\mathcal C}}
\newcommand\cF{{\mathcal F}}
\newcommand\cL{{\mathcal L}}
\newcommand\cM{{\mathcal M}}
\newcommand\cP{{\mathcal P}}
\newcommand\cV{{\mathcal V}}
\newcommand\trans[1]{#1^{\top}}
\newcommand\added[1]{#1}
\title[Face posets of tropical polyhedra and monomial ideals]{Face posets of tropical polyhedra \\ and monomial ideals}
\author{Georg Loho}
\email{g.loho@utwente.nl}
\author{Ben Smith}
\email{benjamin.smith-3@manchester.ac.uk}
\thanks{Georg Loho was supported by the ERC Starting Grant ScaleOpt and by the Swiss National Science Foundation (SNSF) within the project \emph{Convexity, geometry of numbers, and the complexity of integer programming (Nr.~163071)}. Ben Smith was supported by the EPSRC grant \emph{Arrangements of tropical linear spaces (1673882)}.}
\subjclass[2010]{14T05, 52B99, 13D02, 06A07}
\keywords{monomial tropical polyhedra, tropical polytopes, monomial ideal, face poset, lcm-lattice, CP-order}
\begin{document}
\maketitle

\begin{abstract}
  We exhibit several posets arising from commutative algebra, order theory, tropical convexity as potential face posets of tropical polyhedra, and we clarify their inclusion relations.
  We focus on monomial tropical polyhedra, and deduce how their geometry reflects properties of monomial ideals.
  Their vertex-facet lattice is homotopy equivalent to a sphere and encodes the Betti numbers of an associated monomial ideal. 
\end{abstract}

\section{Introduction}

A union of shifted copies of the positive orthant is a seemingly simple but fundamental object in mathematics.
We call such an object a \emph{monomial tropical polyhedron}.
It occurs in the study of monomial ideals in commutative algebra~\cite{MillerSturmfels:2005}, in multicriteria and vector optimisation~\cite{Ehrgott:2005,Jahn:2011}, in order theory~\cite{Trotter:1992} and tropical convexity~\cite{JoswigLoho:2017}. 
While the starting point of our investigation is the search for the concept of \emph{faces} of \emph{tropical polyhedra}, arising from convexity over the $(\max,+)$-semiring, we do not focus on the geometric viewpoint but rather on the combinatorial side of a face poset.
Our work demonstrates that the search for the `right' notion of faces for a tropical polytope is actually a far deeper question  that branches out into commutative algebra and order theory.

\smallskip

In the development of the theory of tropical polyhedra, it turned out that the classical approaches to the definition of a \emph{face} are all flawed, see~\cite{Joswig:2005,DevelinYu:2007,GaubertKatz:2011,AllamigeonKatz:2013,AllamigeonBenchimolGaubertJoswig:2015,AllamigeonKatz:2017}; they do not properly tile the boundary of the polyhedra, they do not fulfil the desirable characterization of a face in terms of its defining vertices or do not tie in with the extremality property arising from linear programming. 
Our work emerges from the introduction of the \emph{vertex-facet lattice}, defined in Section \ref{sec:vertex+facet+lattice}.
This is a new \emph{face lattice} for monomial tropical polyhedra, building on work from Joswig~\cite{Joswig:2005} and from Develin and Yu~\cite{DevelinYu:2007}.
We define it as the intersection lattice of the vertices contained in the facets, which are well-defined for monomial tropical polyhedra.
As the work ~\cite{DevelinYu:2007} exhibited and we also demonstrate later, it is subtle to put this notion of face in correspondence with parts of the boundary of the tropical polyhedron, therefore we view it as a purely combinatorial object.
The restriction to monomial tropical polyhedra is justified because they form the building blocks for general tropical polyhedra.

\begin{theoremnonum}[{Prop.~\ref{prop:monomial+cone+decomp}}]
  A $d$-dimensional tropical polyhedron is the intersection of $d+1$ monomial tropical polyhedra, one for each possible affine tropical direction. 
\end{theoremnonum}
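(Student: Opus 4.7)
The plan is to attach to $P$ one canonical monomial tropical polyhedron $M_j(P)$ for each of the $d+1$ coordinate directions $j \in [d+1]$ of $\TP^d$, and then to show that $P = \bigcap_{j=1}^{d+1} M_j(P)$. For an apex $v$ and a direction $j$, let $S_j(v) := \SetOf{x \in \TP^d}{x_i - v_i \geq x_j - v_j \text{ for all } i}$ denote the shifted positive orthant at $v$ opening in direction $j$; in the affine chart where the $j$th coordinate is held fixed, this is literally a shifted copy of $\RR_{\geq 0}^{d}$. Define $M_j(P) := \bigcup_{v \in P} S_j(v)$. The inclusion $P \subseteq M_j(P)$ is immediate from $v \in S_j(v)$, and the finite-union structure of $M_j(P)$ will follow by reducing to vertices of $P$.

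The vertex reduction I have in mind is the following: if $v \in P$ equals a tropical combination $\bigoplus_k \lambda_k \odot v_k$ of vertices $v_k$ of $P$, and $k^\star$ realises the max at coordinate $j$, a short calculation using the $\1$-invariance of $S_j$ on $\TP^d$ yields $S_j(v) \subseteq S_j(v_{k^\star})$. Hence $M_j(P) = \bigcup_{v \in \verts(P)} S_j(v)$, a finite union of shifted positive orthants in direction $j$, confirming that $M_j(P)$ is indeed a monomial tropical polyhedron in the sense of the paper.

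The heart of the argument is the reverse inclusion $\bigcap_j M_j(P) \subseteq P$. Given $x$ in the intersection, for each $j$ I would pick a witness $p_j \in P$ with $x \in S_j(p_j)$, and set $\lambda_j := x_j - (p_j)_j$. The containment $x \in S_j(p_j)$ unpacks precisely to $\lambda_j \odot p_j \leq x$ coordinatewise with equality in coordinate $j$. Consequently $\bigoplus_j \lambda_j \odot p_j \leq x$ coordinatewise, while the summand $j = i$ forces the $i$th coordinate of the max to equal $x_i$ exactly. Thus $x = \bigoplus_j \lambda_j \odot p_j$, a tropical convex combination of points of $P$, which lies in $P$ by tropical convexity.

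The step I expect to be the main obstacle is the finiteness reduction in the second paragraph: the collapse of $M_j(P)$ to a union indexed by $\verts(P)$ is where the tropical geometry does the real work, and some care will be needed if $P$ is unbounded, since one must account for orthants coming from recession directions as well as ordinary vertices. The remaining steps amount to dictionary translations between sector containments and tropical linear inequalities, modulo fixing the max-plus versus min-plus convention and verifying the affine chart in which each $S_j(v)$ is a genuine shifted positive orthant.
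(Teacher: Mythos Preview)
Your proposal is correct and follows essentially the same route as the paper: they homogenise to $\TTmax^{d+1}$, invoke the Tropical Farkas Lemma to characterise $\tcone{U}$ as those $z$ lying in some $i$th sector of a generator for every $i$, and observe that the $i$th monomial cone is precisely the union of the $i$th sectors of the generators---your reverse-inclusion paragraph is the Farkas direction spelled out by hand, and your vertex-reduction paragraph is their sector-union identity. The unbounded case you flag as the main obstacle is exactly what homogenisation buys them: rays of $P$ become ordinary generators with $-\infty$ in the zeroth coordinate, so the recession cone is absorbed uniformly into the same argument.
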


We establish a common framework based on \emph{covector graphs} to compare several posets partly originating in commutative algebra or order theory, which serve some purpose of a face poset of a monomial tropical polyhedron.
We associate the Scarf poset, CP-order, max-min poset, vertex-facet lattice, max-lattice and pseudovertex poset to a monomial tropical polyhedron.

\begin{theoremnonum}[{Synopsis of Section~\ref{sec:face+posets}}]
The six posets from left to right are embedded in each other, where the embedding of the Scarf poset has the additional property of being cover preserving.
Furthermore, if the monomial tropical polyhedron is sufficiently generic, the first four posets are isomorphic. 
\end{theoremnonum}

Those six, along with two further occurring posets, are visualised in Figure~\ref{fig:several+face+posets}. 
The strictness of the inclusions is deduced through the construction of separating examples.
Each of these posets is a natural candidate as a face poset of a monomial tropical polyhedron that emphasises different properties.
The pseudovertex poset is a highly refined tropical object, derived from the covector decomposition of tropical polyhedra introduced in~\cite{DevelinSturmfels:2004}, that records all possible candidates for faces.
The max-min poset is a far simpler poset that restricts to well-behaved faces that overcome some of the discrepancy displayed in \cite{DevelinYu:2007}, as well as exhibiting the natural duality of monomial tropical polyhedra.
The max-lattice is the natural generalisation of the LCM-lattice, an object from the study of monomial ideals that preserves many homological properties of the monomial ideal \cite{GasharovPeevaWelker:1999,IchimKatthanMoyanoFernandez:2017}.
The CP-order is an object from order theory \cite{FelsnerKappes:2008,Kappes:2006}, used to study orthogonal surfaces~\cite{Miller:2002} (called `grid surfaces' there) and that captures many of the desirable geometric properties one would want a face to exhibit.
Finally, the Scarf poset is derived from the construction of \emph{primitive sets} for Scarf's algorithmic proof of Brouwer's fixed point theorem in~\cite{Scarf:1973}.
While his work operates under a genericity assumption, his definition was generalised to the language of more general monomial ideals in~\cite{BayerPeevaSturmfels:1998, Miller:2000}.
\added{From this viewpoint, the Scarf poset can be viewed as an extension of the Scarf complex to allow unbounded faces.}

\begin{figure} 
  \begin{tikzpicture}
  \matrix (m) [matrix of math nodes,row sep=3em,column sep=5em,minimum width=2em]
          {
            \text{pseudovertex poset} & \\ \text{max-lattice} & \text{LCM-lattice} \\ \text{vertex-facet lattice} & \\ \text{max-min poset} & \\ \text{CP-order} & \text{Betti poset}\\ \text{Scarf poset} & \text{Scarf complex} \\
          };
          \path[]
          (m-1-1) edge [white] node [black] {\rotatebox[origin=c]{90}{$\subset$}} (m-2-1)
          (m-2-1) edge [white] node [black] {\rotatebox[origin=c]{90}{$\subset$}} (m-3-1)
          (m-3-1) edge [white] node [black] {\rotatebox[origin=c]{90}{$\subset$}} (m-4-1)
          (m-4-1) edge [white] node [black] {\rotatebox[origin=c]{90}{$\subset$}} (m-5-1)
          (m-5-1) edge [white] node [black] {\rotatebox[origin=c]{90}{$\subset$}} (m-6-1)
          (m-5-2) edge [white] node [black] {\rotatebox[origin=c]{90}{$\subset$}} (m-6-2)          
          (m-2-2) edge [white] node [black] {\rotatebox[origin=c]{90}{$\subset$}} (m-5-2)
          (m-2-1) edge [white] node [black] {$\supset$} (m-2-2)
          (m-5-1) edge [white] node [black] {$\supset$} (m-5-2)
          (m-6-1) edge [white] node [black] {$\supset$} (m-6-2);

\end{tikzpicture}
  \caption{Different posets serving as face posets; they are all subposets of $\left(\{-\infty\} \cup \RR \cup \{+\infty\}\right)^d$ with the componentwise order. The posets on the left are motivated by geometric constructions, while those on the right arise from commutative algebra. }
  \label{fig:several+face+posets}
\end{figure}

\smallskip

Cryptomorphic to the vertex-facet lattice, we define the \emph{facet complex}, the simplicial complex whose maximal simplices are the vertices incident to a single facet of the monomial tropical polyhedron.
Section~\ref{sec:facet+complex} is dedicated to establishing properties of this complex.
We show that the facet complex captures a certain universal structure:
\begin{theoremnonum}[{Synopsis of Section~\ref{subsec:embedding+facet+complex}}]
The facet complex of a monomial tropical polyhedron contains the following objects as natural subcomplexes:
\begin{enumerate}
\item the facet complex of any lift of the monomial tropical polyhedron,
\item the facet complex of any deformation of the monomial tropical polyhedron,
\item the Scarf complex of the monomial tropical polyhedron.
\end{enumerate}
\end{theoremnonum}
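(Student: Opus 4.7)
The plan is to prove each of the three embeddings separately, but via a common strategy. For each auxiliary object $X$ (a lift, a generification, or the Scarf complex) associated to the monomial tropical polyhedron $P$, I exhibit a canonical vertex-to-vertex map and then verify that simplices are preserved; since a simplicial map is determined by this data, this establishes the subcomplex embedding. The covector-graph framework set up earlier in the paper provides a uniform language in which a ``facet'' can be read off directly from the combinatorial data attached to a vertex, so showing that a collection of vertices lies on a common facet reduces in each case to comparing combinatorial labels.

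For the lift, I first fix what it means for $P'$ to be a lift of $P$: the vertex set of $P'$, after projection or identification along the lifting coordinate, equals the vertex set of $P$. The induced surjection on vertices should carry common-facet incidences in $P'$ to common-facet incidences in $P$, because lifting only refines the combinatorial data indexed by the coordinate directions. Using the covector-graph description, every facet label of $P'$ restricts to a facet label of $P$, and so a maximal simplex of the facet complex of $P'$ maps into a maximal simplex of the facet complex of $P$.

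For a generification, the direction is opposite: a small generic perturbation $P_\varepsilon$ of $P$ can only refine vertex-facet incidences. The identity on vertex sets, which remain canonically identifiable under small perturbations, carries facets of $P_\varepsilon$ into facets of $P$, because the closure of each facet of $P_\varepsilon$ under specialisation is contained in a single facet of $P$. The key lemma, which I expect to follow from the covector decomposition, is that under specialisation maximal cells only merge and never split; this gives immediately that every simplex of the facet complex of $P_\varepsilon$ is a simplex of the facet complex of $P$.

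For the Scarf complex, the plan is to show that each Scarf simplex, defined via the primitive-set condition of \cite{Scarf:1973,BayerPeevaSturmfels:1998}, corresponds to a set of vertices sharing a common facet of $P$. I would use the characterisation of facets of monomial tropical polyhedra in terms of coordinate maxima to exhibit, for each primitive set $S$, an explicit facet containing all elements of $S$: the coordinate singled out by the Scarf condition on $S$ should be exactly the coordinate indexing the ambient facet. The main obstacle is that the Scarf complex is classically defined only in the generic case, so for a non-generic $P$ the primitive-set notion has to be adapted and then matched with the (not-necessarily generic) vertex-facet data; I expect this last step to be the most technical, and it is where the covector-graph framework will do most of the work.
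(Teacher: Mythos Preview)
Your plan misses the actual unifying mechanism and, in the case of lifts, rests on a mistaken picture of what a lift is.

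The paper does \emph{not} route these embeddings through the covector-graph framework. The common engine is instead an elementary ``valid apex'' lemma (Lemma~\ref{lem:incident+vertices+valid+inequalities}): if a subset $S\subseteq\overline{V}$ is exactly the set of generators incident with the apex of some valid tropical inequality for $\monomial{V}$, then by extremality of facet-apices there is a genuine facet-apex $b\geq a$, and $S$ is contained in the facet indexed by $b$. So for each auxiliary object one only has to manufacture, from a facet there, a valid apex for $\monomial{V}$ incident with the right vertices. Your proposal never isolates this step; ``facet labels restrict'' and ``cells only merge under specialisation'' are the right intuitions but do not by themselves produce the simplex of $\cF(V)$ you need.

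Concretely for lifts: a lift is a polyhedron $M\subset\puiseux{\RR}{t}_{\geq 0}^{d}$ in the \emph{same} ambient dimension, with $\val(M)=\monomial{V}$; there is no ``lifting coordinate'' to project along. The paper takes a facet of $M$ with supporting covector $y$ (so $y^{\top}w^{(i)}=1$ on the facet, $>1$ elsewhere), observes that $\val$ is an order-preserving semiring homomorphism, and concludes that $\val(y)$ is the apex of a valid tropical inequality tight at the $\val(w^{(i)})$; then the lemma above finishes. Your covector-graph restriction argument does not recover this, because covectors of $\monomial{V}$ are not defined for the Puiseux lift $M$.

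For generifications the paper's argument is again apex-based: from a facet-apex $b$ of the deformation one reads off its \emph{pattern type} (which coordinates of which generators attain $b$), transports that pattern back to $V$ to define a candidate apex $a$, and checks directly that $a$ is valid for $\monomial{V}$ using the order-preservation built into the definition of deformation. Your ``cells merge under specialisation'' lemma is about the covector decomposition, which by Remark~\ref{rem:max+lattice+covector+comparison} is strictly finer than and not determined by the vertex-facet data, so it does not obviously control the facet complex. Finally, the Scarf embedding is dispatched by the cover-preserving inclusion of the Scarf poset into the vertex-facet lattice (Proposition~\ref{prop:Scarf+cover+subposet}) together with a standard fact from \cite{MillerSturmfels:2005}; no separate facet-construction is needed.
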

Develin and Yu give a list of desirable behaviour that a face lattice for tropical polytopes should have \cite[Conjecture 4.7]{DevelinYu:2007}.
In particular, they say it should have the homology of a sphere, which the facet complex satisfies:
\begin{theoremnonum}[{Theorem~\ref{thm:facet+complex+sphere}}]
The facet complex of a $d$-dimensional monomial tropical polyhedron is homotopy equivalent to a $(d-1)$-sphere.
\end{theoremnonum}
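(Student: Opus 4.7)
My plan is to identify the facet complex as the nerve of a natural cover of the topological boundary of $P$, which I first show is homotopically a $(d-1)$-sphere.

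First, I would compactify $P$ by taking its closure $\bar P$ inside the $d$-cube $[-\infty,+\infty]^d$. The set $\bar P$ deformation retracts to the corner $(+\infty,\ldots,+\infty)$ (since $P$ is closed under adding non-negative vectors), hence is contractible; similarly the closure $\bar Q$ of the complement retracts to the opposite corner. These two contractible closed sets cover the $d$-ball $[-\infty,+\infty]^d$ and meet in $\partial\bar P$, so a Mayer--Vietoris argument (or the standard fact that a contractible separating subset of a ball is a sphere up to homotopy) gives $\partial\bar P\simeq S^{d-1}$.

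Second, $\partial\bar P$ inherits a natural polyhedral complex structure whose top-dimensional cells are the facets of $P$, with the vertices $v_1,\ldots,v_n$ of $P$ among its $0$-cells. I would cover $\partial\bar P$ by the open stars $U_i$ of these vertices. Each $U_i$ is a union of convex cells all sharing $v_i$ as a vertex, hence is star-shaped at $v_i$ and contractible. For $S\subseteq\{1,\ldots,n\}$, the intersection $\bigcap_{i\in S}U_i$ consists of the open cells whose closure contains every $v_i$, $i\in S$, and since every cell is contained in some facet, this intersection is nonempty precisely when $\{v_i\}_{i\in S}$ lies on a common facet of $P$ — which is the defining condition for $S$ to span a simplex of the facet complex. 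Thus the nerve of $\{U_i\}$ is exactly the facet complex, and the nerve lemma will deliver the desired homotopy equivalence with $\partial\bar P\simeq S^{d-1}$ once the cover is shown to be good.

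The main obstacle is the good-cover condition, i.e., contractibility of the nonempty intersections $\bigcap_{i\in S}U_i$. This is automatic for simplicial complexes but not for general polyhedral ones. My plan to handle it is to exploit the monomial structure: for $S$ a face of the facet complex, the coordinatewise join $m_S=\bigvee_{i\in S}v_i$ lies on the common facet and should serve as a canonical apex with respect to which the intersection is star-shaped, using that the cells of $\partial\bar P$ are built from translates of sub-orthants intersected with coordinate hyperplanes. Should this direct geometric argument prove awkward, a robust alternative is to forgo the nerve lemma in favour of a purely combinatorial attack on the facet complex itself: construct a shelling, or equivalently a discrete Morse matching arising from the vertex-facet lattice framework developed in Section~\ref{sec:vertex+facet+lattice}, whose only critical cells are the empty face and a single top-dimensional face, which again yields the homotopy type of $S^{d-1}$.
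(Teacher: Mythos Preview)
Your overall architecture (boundary is a sphere, then nerve lemma) matches the paper's, but there is a genuine gap in both steps, and the two errors are in fact two faces of the same omission: you have forgotten the rays.

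First, the facet complex has vertex set $\overline{V}=V\cup\maxunit$, not just $V$. Covering only by the open stars of the finite vertices $v_1,\dots,v_n$ can at best produce the \emph{bounded} subcomplex $\cF_{fin}(V)$ as nerve, never the full facet complex $\cF(V)$. The paper handles this by adjoining one further piece $D_i=\{p\in\closedmonomial{V}:p_i=+\infty\}$ for each ray $e^{(i)}$; these are exactly the missing strata at infinity and they contribute the simplices of $\cF(V)$ involving the $e^{(i)}$ (including the far face).

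Second, and consistently with the first point, your Mayer--Vietoris computation does not give a sphere. With $\bar P$, $\bar Q$ and the cube $[-\infty,\infty]^d$ all contractible, the long exact sequence forces $\bar P\cap\bar Q=\partial\bar P$ to be \emph{acyclic}, not $S^{d-1}$. Concretely, take $d=2$ and $V=\{(0,0)\}$: then $\closedmonomial{V}=[0,\infty]^2$ and its topological boundary inside $[-\infty,\infty]^2$ is the L-shape $\{0\}\times[0,\infty]\cup[0,\infty]\times\{0\}$, which is contractible. The set that \emph{is} a sphere is the manifold boundary of $\closedmonomial{V}$ as a topological $d$-ball, i.e.\ the staircase surface together with the infinity strata $D_i$; the paper obtains this via a radial retraction from an interior point $\omega$ after identifying $\TTclosed^d$ with a cube. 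Your ``standard fact'' about contractible separating subsets of a ball being spheres is simply false (a diameter separates a disc).

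Once you include the $D_i$ in your cover, the contractibility of nonempty intersections is exactly the point where the paper does real work: it shows each $C_v$ (and hence each $A_S$) is \emph{min}-tropically convex, which immediately gives star-shapedness. Your proposed apex $m_S=\bigvee_{i\in S}v_i$ will in general only be a max-barycenter, and star-shapedness at $m_S$ is not obvious for the mixed pieces $C_v\cap D_i$; the min-tropical convexity argument is the clean substitute. Your fallback via shelling or discrete Morse is plausible but would need an explicit matching, which the paper does not attempt.
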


\smallskip

Finally, we establish a dictionary in Section \ref{sec:monomial+ideals} between monomial ideals and monomial tropical polyhedra to prove the following:
\begin{theoremnonum}[{Theorem~\ref{thm:facet+complex+Betti}}]
The facet complex of a monomial tropical polyhedron encodes the Betti numbers of its associated monomial ideal.
\end{theoremnonum}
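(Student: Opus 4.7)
The plan is to combine the classical description of multigraded Betti numbers as reduced simplicial homologies of open intervals in the LCM-lattice with the dictionary between monomial ideals and monomial tropical polyhedra developed in Section~\ref{sec:monomial+ideals}. Recall the Gasharov--Peeva--Welker formula: for a monomial ideal $I$, one has $\beta_{i,m}(I) = \dim_k \tilde H_{i-1}(\Delta(\hat 0, m); k)$, where $\Delta(\hat 0, m)$ is the order complex of the open interval in the LCM-lattice below the multidegree $m$. Since the Betti poset consists exactly of those $m$ for which this homology is nontrivial, it suffices to recover all such homologies from the facet complex $\cF(P)$ of the associated monomial tropical polyhedron $P$.

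First, I would transport the setup via the dictionary of Section~\ref{sec:monomial+ideals}: minimal generators of $I$ become vertices of $P$, divisibility becomes componentwise order on exponent vectors, and $\lcm$ becomes tropical join. Under this translation, the LCM-lattice of $I$ coincides with the max-lattice of $P$, and, by the synopsis of Section~\ref{sec:face+posets}, the Betti poset is already a subposet of the vertex-facet lattice, i.e., the face poset of $\cF(P)$.

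Next, for each element $m$ appearing in the vertex-facet lattice, I would isolate the subcomplex $\cF(P)_{\le m}$ of $\cF(P)$ whose simplices have vertex set bounded above by $m$ componentwise, and interpret it as the facet complex of the monomial tropical subpolyhedron of $P$ truncated below $m$. Theorem~\ref{thm:facet+complex+sphere} then pins down its homotopy type as a sphere of the appropriate dimension, while the inclusion of the Scarf complex from the synopsis of Section~\ref{subsec:embedding+facet+complex} supplies the comparison with $\Delta(\hat 0, m)$, using that for generic ideals the Scarf complex already computes a minimal free resolution.

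The main obstacle is controlling the gap between the vertex-facet lattice and the full LCM-lattice, since the former is genuinely coarser. I expect to show that this gap occurs only at multidegrees $m$ outside the Betti poset, where $\beta_{i,m}(I) = 0$ and the corresponding truncated subcomplex $\cF(P)_{\le m}$ must be contractible. Establishing this contractibility, most naturally via an explicit deformation retract onto a dominant tropical vertex of the truncation, should be the technical heart of the argument; everything else then follows by assembling the multigraded pieces into the total Betti numbers.
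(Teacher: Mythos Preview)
Your opening is on target: the Gasharov--Peeva--Welker formula together with the containment of the Betti poset in the vertex--facet lattice is exactly the right starting point, and the paper uses the same chain of inclusions (Betti poset $\subseteq$ CP-order $\subseteq$ vertex--facet lattice $\subseteq$ LCM-lattice). But the second half of your plan has a real gap.

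The invocation of Theorem~\ref{thm:facet+complex+sphere} is misplaced. That theorem concerns the \emph{full} facet complex, including the rays and the far face, and identifies it as a $(d-1)$-sphere; it says nothing about the truncated subcomplexes $\cF(P)_{\le m}$. Moreover, $\cF(P)_{\le m}$ is not the facet complex of a smaller monomial tropical polyhedron in any direct sense: truncating at $m$ changes which halfspaces are facet-defining, so the maximal simplices are not simply inherited. Likewise, the Scarf comparison only bites in the generic case and does not give you $\Delta(\hat 0,m)$ in general, and your proposed ``deformation retract onto a dominant tropical vertex'' for degrees outside the Betti poset is neither standard nor set up anywhere in the paper.

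What you are missing is the crosscut theorem (Theorem~\ref{thm:crosscut+homology}). The paper's route is much shorter: Lemma~\ref{lem:crosscut+facet+complex} identifies the bounded facet complex $\cF_{fin}(V)$ \emph{exactly} as the crosscut complex of the affine part of the vertex--facet lattice with respect to its atoms $V$. Folkman's theorem then gives $\tilde H_*(\cF_{fin}(V)) \cong \tilde H_*(\text{vertex--facet lattice})$, and the same argument localises to each open interval $(\hat 0,m)$. Since the Betti poset already lies inside the vertex--facet lattice (via Lemma~\ref{lem:syzygy+points+CP+points} and Proposition~\ref{prop:CP+subposet+VIF}), the Clark--Mapes result guarantees that passing from the LCM-lattice down to the vertex--facet lattice loses no homology. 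No truncated subcomplexes, no geometric retracts: the crosscut identification does all the work.
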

We show how recent advances on the structure and resolutions of monomial ideals~\cite{IchimKatthanMoyanoFernandez:2017,Chen:2019,EagonMillerOrdog:2019} are reflected in the geometry of monomial tropical polyhedra.
This complements multiple other connections between tropical convexity and monomial ideals noted in~\cite{BlockYu:2006,DochtermannJoswigSanyal:2012,Noren:2016,MillerSturmfels:2005}.
We make an explicit interpretation of the duality for monomial tropical polyhedra demonstrated in~\cite{JoswigLoho:2017} as Alexander duality for monomial ideals, mirroring the \v{C}ech hull construction from \cite{Miller:1998}.
We further expand on the connection between cellular resolutions of monomial ideals and lifts of monomial tropical polyhedra which was explored in \cite{DevelinYu:2007}.
We also consider two posets associated to monomial ideals, the LCM-lattice \cite{GasharovPeevaWelker:1999} and the Betti poset \cite{ClarkMapes:2014b}, and investigate their relation with the face posets of monomial tropical polyhedra established in Section~\ref{sec:face+posets}, briefly outlined in Figure \ref{fig:several+face+posets}.

\smallskip

We begin our investigation, in Section~\ref{sec:monomial+tropical+polyhedra}, by identifying the \emph{tropical hypercube} $\left(\{-\infty\} \cup \RR \cup \{\infty\}\right)^d$ as the natural space for the face posets and their duality.
In particular, we later extend existing poset constructions to obtain the complete structure mimicking the face poset of a classical polytope.
While one can use projective transformations to reduce the combinatorial study of classical polyhedra to polytopes, this fails in the tropical world due to the lack of appropriate transformations. 
Hence, dealing with rays and generators with non-finite entries was often avoided in former work as it imposes additional technical obstacles.
We accept this additional overhead to lay the groundwork for the further study of face posets of tropical polyhedra.

\section{Monomial tropical polyhedra} \label{sec:monomial+tropical+polyhedra}

\subsection{Tropical hypercube} \label{sec:tropical+hypercube}

We work over $\TTmax = (\RR \cup \{-\infty\}, \oplus, \odot)$, the \emph{$\max$-tropical semiring}, where $\oplus$ denotes the $\max$ operation and $\odot$ denotes addition.
Our definitions of tropical convexity follow~\cite{GaubertKatz:2007}.
We define the \emph{tropical convex hull} of a finite set $V = \{ v^{(1)}, \dots, v^{(n)} \} \subset \TTmax^d$ by 
\begin{equation} \label{eq:convex+hull}
\tconv{V} = \SetOf{\bigoplus_{j=1}^{n} \lambda_j \odot v^{(j)}}{v^{(j)} \in V \ , \ \lambda_j \in \TTmax \ , \ \bigoplus \lambda_j = 0} \enspace .
\end{equation}
This is the \emph{tropical polytope} generated by $V$.
A set is \emph{tropically convex} if it contains the tropical convex hull of each of its finite subsets.
Additionally, we define the \emph{tropical conic hull} of a finite set $W = \{ w^{(1)}, \dots, w^{(m)} \}$ by
\begin{equation} \label{eq:conic+hull}
\tcone{W} = \SetOf{\bigoplus_{j = 1}^{m} \lambda_j \odot w^{(j)}}{w^{(j)} \in W \ , \ \lambda_j \in \TTmax} \enspace .
\end{equation}
\begin{remark}
Some parts of the literature refer to \eqref{eq:conic+hull} as the tropical convex hull.
When working in $\RR^d$, the condition $\bigoplus \lambda_j = 0$ can be obtained by quotienting by scalar addition, a standard practice in tropical geometry.
However this does not hold when working with infinite coordinates in $\TTmax^d$, and so tropical convex and conic hull are necessarily different notions.
\end{remark}
More generally, we can define a \emph{tropical polyhedron} as the tropical sum
\begin{equation} \label{eq:minkowski+decomposition}
Q = \tconv{V} \oplus \tcone{W} = \SetOf{v \oplus w}{v \in \tconv{V} \ , \ w \in \tcone{W}}
\end{equation}
for two finite subsets $V, W \subset \TTmax^d$. 
In this representation, the set $\tcone{W}$ is unique and it is called the \emph{tropical recession cone} of $Q$.

There is a distinguished subset of the generators of $\tconv{V}$ called the \emph{extreme} points, elements that cannot be written as the tropical convex hull of other points of $\tconv{V}$.
These form a minimal generating set for the tropical polytope.
Analogously, there is a distinguished family of points of $\tcone{W}$ called \emph{extreme} that cannot be written as the tropical sum of other points of $\tcone{W}$.
If $w \in \tcone{W}$ is extreme, the points in the set $\SetOf{\lambda \odot w}{\lambda \in \TTmax}$ are also extreme and form an \emph{extremal ray} of the tropical cone.
A set of representatives from the extremal rays yields a minimal generating set for $\tcone{W}$, unique up to choice of representative.
These two minimal generating sets comprise a minimal generating set for $Q$.

Given $Q \subseteq \TTmax^d$, we obtain its \emph{homogenisation} $\widehat{Q} \subseteq \TTmax^{d+1}$ as the tropical cone defined as
\begin{align*}
\widehat{Q} &= \tcone{\widehat{V} \cup \widehat{W}} \enspace , \\
\widehat{V} &= \SetOf{(0,v_1,\dots,v_d)}{v \in V} \enspace , \\
\widehat{W} &= \SetOf{(-\infty,w_1,\dots,w_d)}{w \in W} \enspace .
\end{align*}
Similarly, we refer to $\widehat{V}$ and $\widehat{W}$ as the homogenisation of the points and rays respectively.
By \cite[Proposition 4]{AllamigeonGaubertGoubault:2013}, identifying $\TTmax^d$ with $\{0\} \times \TTmax^d \subset \TTmax^{d+1}$ allows us to recover $Q$ from $\widehat{Q}$ via
\begin{align*}
\{0\} \times Q = \widehat{Q} \cap (\{0\} \times \TTmax^d) \enspace .
\end{align*}
Moreover, the (minimal) generators of $\widehat{Q}$ define the (minimal) generators of $Q$.

By the tropical Minkowski-Weyl theorem, \cite[Theorem 1]{GaubertKatz:2011}, such a tropical polyhedron can also be written as the intersection of finitely many max-tropical \discretionary{half-}{spaces}{halfspaces}, which are of the form
\begin{equation} \label{eq:general+halfspace}
H(a,I) = \SetOf{x \in \TTmax^{d}}{\bigoplus_{i \in I} a_i \odot x_i \geq \bigoplus_{j \in [d]_0 \setminus I} a_j \odot x_j \ , \ x_0 = 0} \enspace .
\end{equation}
for some $(a_0,a_1,\ldots,a_d) \in \TTmax^{d+1}$ and a subset $I$ of $[d]_0 := \{0,1,\dots, d\}$.

The dual point $-a \in \TTmin^{d+1}$ is unique up to scaling and it is often called the apex of the halfspace.
It has the property that evaluating the inequality in~\eqref{eq:general+halfspace} yields the same value for all the products $a_k \odot x_k$ with $k \in [d]$.
The point naturally lives in the dual space $\TTmin^{d+1}$, where $\TTmin = (\RR \cup \{+\infty\}, \min, +)$ is the \emph{min-tropical semiring}.
We will use the notion of an apex in a slightly different way tailored to the specific class of tropical polyhedra we are interested in. 

We consider the $\max$-tropical semiring embedded in the space
\[
\TTclosed = \{-\infty\} \cup \RR \cup \{\infty\} \enspace .
\]
This leads to the $d$-dimensional \emph{tropical hypercube}, the space $\TTclosed^d$.

Tropical polyhedra and their defining halfspaces naturally live in spaces that are dual to each other, namely $\TTmax$ and $\TTmin$.
To capture both features at once, it will be beneficial to consider a larger space that comprises the two spaces. 
$\TTclosed$ is precisely that, with $\TTmax$ and $\TTmin$ identified along their common elements.

\begin{remark}
  It will be useful to us to consider the tropical hypercube as a topological space, in particular, as a compactification of $\TTmax^d$.
  We imbue $\TTmax^d$ with the product topology induced by the order topology on $\TTmax$.
With this topology, $\TTmax^d$ is dense in $\TTclosed^d$ and so the tropical hypercube is the compactification of $\TTmax^d$.
This follows by considering any point $p \in \TTclosed^d$ as the limit of a points $p^{(k)} = (p_1^{(k)},\dots,p_d^{(k)}) \in \TTmax^d$, where $p_i^{(k)} = p_i$ if $p_i \in \TTmax$ and $(p_i^{(k)})_{k \in \NN}$ is a strictly increasing divergent sequence otherwise.
Note that $\TTclosed^d$ is also the compactification of $\TTmin^d$ with respect to this topology.
\end{remark}

\begin{remark}
Compactifications are widely used in tropical geometry.
However, one usually takes the quotient 
\[
\TPmax^d =\left(\TTmax^{d+1} \setminus \{(-\infty,\dots,-\infty)\}\right) / \RR \cdot \1
\]
as a compactification of the $d$-dimensional space.
In dimension one, \added{there is a bijection between the tropical projective line $\TP^1$ and our compactification $\TTclosed$ given by $(x_1,x_2) \mapsto x_1-x_2$.}
The classical projective line $\mathbb{P}^1$ can be formed by taking two copies of $\mathbb{A}^1$ and gluing them by identifying $x$ and $x^{-1}$.
Tropically, this is done by gluing two copies of $\TTmax$ by identifying $x$ and $-x$, or equivalently by gluing $\TTmax$ and $\TTmin$ along $\RR$.
\added{This bijection extends to higher dimensions, where there is a natural identification between $\TTclosed^d$ and $(\TP^1)^d$.}
\end{remark}

The \emph{$\max$-tropical unit vectors} $e^{(1)}, \ldots, e^{(d)} \in \TTmax^{d}$ are given by
\[
e^{(i)}_{k} \ = \
\begin{cases}
  0  & \mbox{ if }  i = k \\
  -\infty & \mbox{ otherwise }
\end{cases}
\qquad \text{for } 1 \leq i,k \leq d \enspace .
\]
We set
\begin{equation} \label{eq:def+maxunit}
\maxunit \ = \ \bigl\{ e^{(1)}, e^{(2)}, \dots, e^{(d)} \bigr\} \ \subseteq \ \TTmax^{d} \enspace . 
\end{equation}
This also gives rise to the dual \emph{$\min$-tropical unit vectors} $\minunit = -\maxunit$.

\subsection{Monomial tropical polyhedra}

Our main object of study are tropical polyhedra whose recession cone is $\tcone{\maxunit}$, the span of the $\max$-tropical unit vectors.
\begin{definition} \label{def:monomial+tropical+polyhedron}
For a finite set $V \subset \TTmax^d$, we define the \emph{monomial tropical polyhedron} by
\[
\monomial{V} = \tconv{V} \oplus \tcone{\maxunit} \enspace .
\]
\end{definition}
Due to the special structure of its recession cone, these tropical polyhedra have a unique minimal set of extremal generators for the polytope part $\tconv{V}$, which we call its \emph{vertices}.

A different point of view on monomial tropical polyhedra comes from the observation that we can also represent it as a classical Minkowski sum with a non-negative orthant
\begin{equation} \label{eq:representation+sum+orthant}
\monomial{V} = V + \RR_{\geq 0}^d \enspace .
\end{equation}
This also yields the connection with multicriteria optimisation~\cite{DaechertKlamrothLacourVanderpooten:2017} where the latter construction leads to the set of points dominated by $V$.

For a subset $J \subseteq [d]$ we introduce the vector $f^J \in \TTclosed^d$
\[
f^J_{k} \ = \
\begin{cases}
  +\infty  & \mbox{ if } k \in J \\
  0 & \mbox{ otherwise }
\end{cases}
\qquad \text{for } 1 \leq i,k \leq d \enspace .
\]
To capture all features on the boundary, we define the \emph{closed monomial tropical polyhedron} by
\[
\closedmonomial{V} = \bigcup_{J \subseteq [d]} \left(f^J + \monomial{V}\right) \subset \TTclosed^d \enspace .
\]
Note that this differs slightly from the use of this notation in~\cite{JoswigLoho:2017}; this has been adapted due to the focus on tropical polyhedra than on tropical cones.

The external representation of monomial tropical polyhedra is as follows.
All defining halfspaces are of the form 
\begin{equation} \label{eq:principal+halfspace}
H(c) = \SetOf{x \in \TTmax^{d}}{\bigoplus_{i \in [d]} c_i \odot x_i \geq 0} \enspace ,
\end{equation}
with $(c_1,\dots,c_d) \in \TTmax^{d}$. We call the dual point $a = -c \in \TTmin^{d}$ the \emph{apex} of the halfspace.
Unlike general tropical polyhedra, monomial tropical polyhedra have a unique minimal exterior description.

This can be nicely seen through a particular duality exhibited in~\cite{JoswigLoho:2017}.
Let us define $\closedcomplementarymonomial{V}$ as the closure in $\TTclosed^d$ of the complement of the closed monomial tropical polyhedron $\TTclosed^d \setminus \closedmonomial{V}$ and $\complementarymonomial{V}$ as $\closedcomplementarymonomial{V} \cap \TTmin^d$. 
We state~\cite[Theorem 10]{JoswigLoho:2017} in a slightly modified version, tailored to our purposes.

\begin{theorem} \label{thm:complementary-cones}
  The set $\complementarymonomial{V}$ is a $\min$-tropical polyhedron in $\TTmin^{d}$.
  It has the exterior description $\bigcap_{v \in V} -H(-v)$.
  Furthermore, if $\mathcal{H}$ is a set of $\max$-tropical halfspaces such that $\bigcap \mathcal{H} = \monomial{V}$, then
  \[
  \complementarymonomial{V} \ = \ - \monomial{-A} \enspace ,
  \]
  where $A \subset \TTmin^{d}$ is the set of apices of the tropical halfspaces in $\mathcal{H}$.
 In particular, $\complementarymonomial{V} = \tconv{A} \oplus \tcone{\minunit}$.
\end{theorem}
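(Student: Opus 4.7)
The plan is to start from the Minkowski representation $\monomial{V} = V + \RR_{\geq 0}^d$ given in~\eqref{eq:representation+sum+orthant}, which characterises $\monomial{V}$ as the set of points in $\TTmax^d$ dominated from below coordinatewise by some vertex $v \in V$. The complement in $\TTmax^d$ then consists of those $x$ for which every $v \in V$ fails this domination in at least one coordinate, i.e.\ $\forall v \in V,\ \exists i \in [d],\ x_i < v_i$. Taking the closure inside $\TTclosed^d$ only relaxes the strict inequality, and after intersecting with $\TTmin^d$ one obtains the explicit description
\[
\complementarymonomial{V} \ = \ \SetOf{y \in \TTmin^d}{\forall v \in V,\ \exists i \in [d],\ y_i \leq v_i} \enspace .
\]
I expect the only real technicality to lie here, in justifying this identification under the product-of-order-topology on $\TTclosed^d$: one must verify that sequences escaping to $+\infty$ in some coordinate, or approaching $v_i$ from below in another, yield exactly the weak inequality claimed and that no extraneous points are added.

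Once this is in place, the first two assertions are immediate. After the coordinate change $y = -x$, the condition ``$\exists i,\ y_i \leq v_i$'' is precisely the defining inequality of the dualised halfspace $-H(-v)$, so that $\complementarymonomial{V} = \bigcap_{v \in V} -H(-v)$. This exhibits $\complementarymonomial{V}$ as a finite intersection of min-tropical halfspaces, hence a min-tropical polyhedron by the min-tropical Minkowski--Weyl theorem.

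For the remaining two assertions I would essentially transport \cite[Theorem~10]{JoswigLoho:2017} through our compactification. Given any exterior description $\bigcap \mathcal{H} = \monomial{V}$, the set $A \subset \TTmin^d$ of apices is a minimal generating set for the max-tropical polyhedron $-\complementarymonomial{V}$: each defining halfspace of $\monomial{V}$ contributes an apex that becomes an extreme point after negation, and conversely each extreme point of $-\complementarymonomial{V}$ yields the apex of a defining halfspace via the apex--generator duality. This identification gives $-\complementarymonomial{V} = \monomial{-A}$, and hence $\complementarymonomial{V} = -\monomial{-A}$. The final equality $\complementarymonomial{V} = \tconv{A} \oplus \tcone{\minunit}$ then follows by applying the min-tropical analogue of~\eqref{eq:representation+sum+orthant} to $-\monomial{-A}$, since coordinatewise negation exchanges $\RR_{\geq 0}^d$ with the non-positive orthant, which is exactly the cone tropically generated by $\minunit$.
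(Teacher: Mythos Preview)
The paper does not prove this theorem at all: it is introduced with the sentence ``We state \cite[Theorem~10]{JoswigLoho:2017} in a slightly modified version, tailored to our purposes'' and no argument is given. So there is no proof in the paper to compare your proposal against; you are effectively sketching the content of the cited reference.

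Your outline is the right shape and matches how the result is obtained in \cite{JoswigLoho:2017}: use the order-theoretic description $\monomial{V}=\{x:\exists v\in V,\ x\ge v\}$, pass to the complement, close up in $\TTclosed^d$, and read off min-tropical halfspaces indexed by the vertices. One point to tighten: your sentence ``after the coordinate change $y=-x$, the condition $\exists i,\ y_i\le v_i$ is precisely the defining inequality of $-H(-v)$'' does not parse cleanly. You have already named the variable in $\complementarymonomial{V}$ as $y$, so there is no coordinate change left to make; and with the paper's convention $H(c)=\{x:\max_i(c_i+x_i)\ge 0\}$ and $-H$ meaning pointwise negation, one gets $-H(-v)=\{y:\exists i,\ y_i\le -v_i\}$ rather than $\{y:\exists i,\ y_i\le v_i\}$. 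The geometry you describe (each vertex $v$ becomes the apex of a min-tropical halfspace in the exterior description of $\complementarymonomial{V}$) is correct; just be careful that the signs in your write-up actually match whichever convention you fix. For the last two assertions you explicitly defer to \cite[Theorem~10]{JoswigLoho:2017}, which is exactly what the paper does, so there is nothing further to compare.
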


\begin{corollary} \label{cor:unique+exterior+description}
  Let $\monomial{V}$ be a monomial tropical polyhedron generated by $V$. Then there is a unique inclusionwise minimal finite set $A \subset \TTmin^d$ such that
  \[
  \monomial{V} = \bigcap_{a \in A} H(-a) \enspace .
  \]
\end{corollary}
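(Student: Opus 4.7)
The plan is to derive the corollary by combining Theorem \ref{thm:complementary-cones} with uniqueness of the minimal generating set for monomial tropical polyhedra, applied to the dual object $\complementarymonomial{V}$.

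First, I would note that at least one finite exterior description $\monomial{V} = \bigcap_{a \in A} H(-a)$ exists by the tropical Minkowski–Weyl theorem cited earlier. Theorem \ref{thm:complementary-cones} then gives $\complementarymonomial{V} = \tconv{A} \oplus \tcone{\minunit}$, so the apex set of any exterior description is a generating set for $\complementarymonomial{V}$ regarded as a min-tropical monomial polyhedron. The symmetric min-tropical version of the remark after Definition \ref{def:monomial+tropical+polyhedron} (the recession cone $\tcone{\minunit}$ forces a unique minimal set of extreme generators) produces a distinguished subset $A^* \subseteq A$ satisfying $\complementarymonomial{V} = \tconv{A^*} \oplus \tcone{\minunit}$. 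Any other exterior description of $\monomial{V}$ yields an apex set generating $\complementarymonomial{V}$, which therefore must contain $A^*$.

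Next, I would verify that $A^*$ itself provides a valid exterior description, namely $\monomial{V} = \bigcap_{a^* \in A^*} H(-a^*)$. The inclusion $\monomial{V} \subseteq \bigcap_{a^* \in A^*} H(-a^*)$ is immediate from $A^* \subseteq A$. For the reverse inclusion, it suffices to show that each halfspace $H(-a)$ with $a \in A \setminus A^*$ is already implied by the halfspaces coming from $A^*$. The key observation is that the recession cone $\tcone{\minunit}$ encodes componentwise dominance: writing $a = \min_j(\lambda_j \odot a^{(j)}) \oplus r$ with $a^{(j)} \in A^*$, $\min_j \lambda_j = 0$, and $r \in \tcone{\minunit}$, picking any $j^*$ with $\lambda_{j^*} = 0$ yields $a \leq a^{(j^*)}$ componentwise. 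Expanding the definition $H(-a) = \{ x \mid \exists i,\, x_i \geq a_i \}$, componentwise dominance $a_i \leq a^{(j^*)}_i$ forces $H(-a^{(j^*)}) \subseteq H(-a)$, so the halfspace $H(-a)$ is redundant.

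Combining these two steps, $A^*$ is contained in every apex set of an exterior description, and is itself an exterior description, so it is the unique inclusionwise minimal such set. The main potential obstacle is the componentwise domination step: it uses crucially that $\tcone{\minunit}$ corresponds to the non-positive orthant in the finite locus, and some care is required to handle $\pm \infty$ entries coming from the compactification $\TTclosed^d$ and from generators of $\complementarymonomial{V}$ lying on the boundary. Once this is handled, uniqueness follows by the standard extreme-point argument transferred from $\monomial{V}$ to $\complementarymonomial{V}$ via Theorem \ref{thm:complementary-cones}.
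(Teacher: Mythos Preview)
Your proposal is correct and matches the paper's intended approach: the corollary is stated without proof, precisely because it is meant to follow immediately from Theorem~\ref{thm:complementary-cones} by transferring the uniqueness of extreme generators of a monomial tropical polyhedron to the dual object $\complementarymonomial{V}$. Your redundancy argument for the halfspaces indexed by $A\setminus A^*$ is the right way to close the direction not explicitly supplied by Theorem~\ref{thm:complementary-cones}, and your caveat about $\pm\infty$ coordinates is easily handled since all apices lie in $\TTmin^d$.
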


For a classical pointed polyhedron $P$, there is always a projective transformation mapping it to a polytope $\overline{P}$, allowing one to assign the face lattice of a polytope to a polyhedron~\cite{JoswigKaibelPfetschZiegler:2001}.
We shall perform a similar construction for $\monomial{V}$ and $\closedmonomial{V}$.

While the Corollary~\ref{cor:unique+exterior+description} describes the \emph{principal halfspaces}, we also introduce additional inequalities \added{corresponding to the \emph{non-negativity constraints} on $\TTmax^d$. 
Explicitly, we include} (at most) $d$ inequalities of the form $x_i \geq -\infty$ for $i \in [d]$.
We add the inequality $x_i \geq -\infty$ if there is a vertex $v \in V$ with $v_i = -\infty$.
These are the \emph{boundary inequalities}, and including them allows us to get the exterior polyhedral description in an analogous form as for a classical polytope.
\added{We note that there have been recent attempts to formulate tropical polyhedra over the signed tropical numbers, where one does not have these non-negativity constraints~\cite{LohoVegh:2020}}.

\added{To represent boundary inequalities as tropical halfspaces with apices, we note that each boundary inequality is the limit of the inequality $x_i \geq -c$ as $c$ goes to infinity.
These can be equivalently expressed as the tropical linear inequality
\begin{equation} \label{eq:boundary+inequality}
x_i \odot c \geq 0 \oplus \bigoplus_{j \in [d] \setminus \{i\}}(x_j \odot -\infty)  \enspace ,
\end{equation}
which has a well-defined apex $(\infty, \dots, -c,\dots, \infty) \in \TTmin^d$ with all entries $+\infty$ except for $-c$ in the $i$th coordinate.
Taking the limit of this apex as $c \rightarrow \infty$ gives us the \emph{boundary apex} $(\infty, \dots, -\infty,\dots, \infty) \in \TTclosed^d$ of the $i$th boundary inequality.
By defining the boundary apex as a limit of apices of principal halfspaces, the combinatorics of boundary apices mirrors that of apices of principal halfspaces; see Proposition~\ref{prop:facetapex+characterisation}.}

Moreover, we introduce a special superfluous inequality $0 \geq -\infty$, equivalent to
\begin{equation} \label{eq:far+face}
0 \geq \bigoplus_{i \in [d]} x_i \odot -\infty \enspace ,
\end{equation}
which determines the \emph{far face}.
The notion of the far face originally occurs in the classical construction as the unique maximal face of $\overline{P}$ not in the images of faces of $P$.

While Corollary~\ref{cor:unique+exterior+description} describes $\monomial{V}$ in $\TTmax^d$, the purpose of these additional inequalities is to describe $\closedmonomial{V}$ including its boundary correctly in $\TTclosed^d$. 
For example, the far face inequality~\eqref{eq:far+face} is not tight for any points of $\monomial{V}$, but it is tight for points of the boundary of $\closedmonomial{V}$.

\begin{example} \label{ex:model}

We examine ``the model" introduced by Develin and Yu~\cite{DevelinYu:2007}.
This is the tropical polytope $\tconv{V} \subset \TTmax^3$ shown in Figure~\ref{fig:the+model}, generated by
\begin{align*}
V \ = \
\begin{blockarray}{cccccc}
A & B & C & D & E & F \\
\begin{block}{(cccccc)}
2 & 2 & 1 & 1 & 1 & 1 \\
0 & 1 & 2 & 3 & 4 & 5 \\
1 & 0 & 5 & 4 & 3 & 2 \\
\end{block}
\end{blockarray} \enspace .
\end{align*}
\added{Note that we will use the same non-standard axes for all 3D figures as those given in used in~Figure~\ref{fig:the+model}.
This allows us to display boundary behaviour cleanly in later examples, and allows direct comparison with the examples given in~\cite{DevelinYu:2007} where the same axes are used.}

\begin{figure}[h]
    \includegraphics[width=0.5\columnwidth]{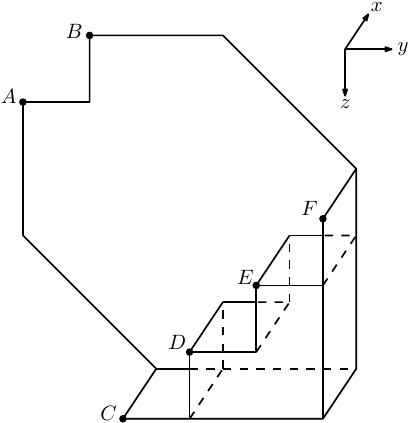}%
    \caption{The model $\tconv{V}$ from~\cite{DevelinYu:2007}, \added{along with the relevant axes}.}
    \label{fig:the+model}
\end{figure}
\begin{figure}[h]
    \includegraphics[width=0.6\columnwidth]{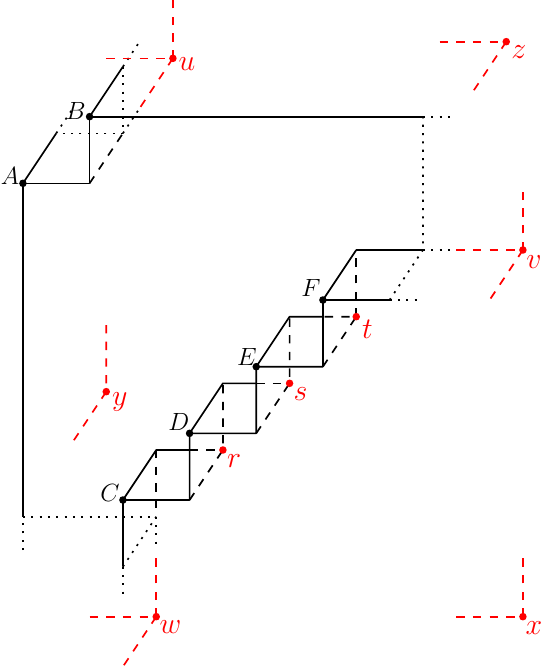}%
    \caption{The monomial tropical polyhedron $\monomial{V}$; the apices of its principal halfspaces are marked in red.}
    \label{fig:the+model+mono+generators}
\end{figure}

The monomial tropical polyhedron $\monomial{V}$ generated by $V$ is shown in Figure \ref{fig:the+model+mono+generators}.
The apices of its principal halfspaces are shown in red.
Explicitly, they are the following points in $\TTmin^3$:
\begin{align*}
\begin{blockarray}{ccccccccc}
r & s & t & u & v & w & x & y & z \\
\begin{block}{(ccccccccc)}
2 & 2 & 2 & \infty & 2 & 2 & 1 & \infty & \infty \\
3 & 4 & 5 & 1 & \infty & 2 & \infty & 0 & \infty \\
5 & 4 & 3 & 1 & 2 & \infty & \infty & \infty & 0 \\
\end{block}
\end{blockarray}
\end{align*}
As no generators have $-\infty$ in any coordinate, we do not require any boundary inequalities of the form \eqref{eq:boundary+inequality}.
The only additional inequality we add is the far-face given in equation \eqref{eq:far+face}.
\end{example}

\section{Vertex-facet lattice} \label{sec:vertex+facet+lattice}

As monomial tropical polyhedra have a unique minimal set of non-redundant vertices and defining halfspaces, their incidences form a canonical notion of \emph{combinatorial type} for a monomial tropical polyhedron.
While this notion is motivated from the combinatorial type of classical polytopes, we discuss the relation with the existing notion of combinatorial type for tropical polytopes arising from the covector decomposition~\cite{FinkRincon:2015} in Section~\ref{subsec:covector+decomposition}.

To define a notion of face lattice, we require concrete characterisations of apices of facets and incidence in $\TTclosed^d$.

\subsection{Facet-apices and incidence}

The natural partial order on $\TTclosed$ is the standard partial order on $\RR^d$ extended to $\TTclosed^d$.
This means that
\begin{equation}
  x \leq y \quad \Leftrightarrow \quad x_i \leq y_i \text{ for all } i \in [d] \enspace ,
\end{equation}
and with $\leq$ replaced by $<$, respectively.

The following statement appears in the literature in~\cite{AllamigeonKatz:2013} and~\cite{DaechertKlamrothLacourVanderpooten:2017} for tropical polyhedra in $\RR^d$.
We give a simple extension to $\TTclosed^d$.

\begin{proposition} \label{prop:facetapex+characterisation}
  A point $a \in \TTclosed^{d}$ is an apex of a principal or non-redundant boundary halfspace of the monomial tropical polyhedron $\monomial{V}$ if and only if
  \begin{enumerate} 
  \item there is no generator $v \in V$ with $v < a$, \label{eq:apex+cond+1}
  \item for each $i \in [d]$ with $a_i \neq \infty$ there exists a generator $v$ such that $v_i = a_i$ and $v_k < a_k$ for all $k \neq i$. \label{eq:apex+cond+2}
  \end{enumerate}
\end{proposition}
\begin{proof}
The arguments of~\cite{AllamigeonKatz:2013} and~\cite{DaechertKlamrothLacourVanderpooten:2017} still hold if one allows for $-\infty$ as coordinates of the generators i.e. $a \in \TTmin^d$ a principal apex.
For any other points $a \notin \TTmin^d$, at least one coordinate must satisfy $a_i = -\infty$\added{: this immediately implies the first condition cannot hold.}
For any coordinate $k \neq i$, if $a_k \neq \infty$ then the second condition implies we can find some generator $v$ such that $v_k = a_k$ and $v_i < a_i = -\infty$, giving a contradiction.
Therefore the only remaining points that may satisfy these conditions are boundary apices.
\added{The boundary apices satisfy condition~\eqref{eq:apex+cond+2} precisely when the boundary inequality $x_i \geq -\infty$ is tight with respect to some generator.}
\end{proof}

We define a \emph{facet-apex} to be any point in $\TTclosed^d$ satisfying the condition of Proposition \ref{prop:facetapex+characterisation}, and we denote the set of them by $F$.
Those arising as apices of principal halfspaces we call \emph{principal apices} and those arising from non-redundant boundary inequalities we call \emph{boundary apices}.
We also associate a \emph{far-apex} $b^{\infty}$ without geometric meaning in $\TTclosed^d$ to the inequality \eqref{eq:far+face} corresponding to the far face.

Using Proposition~\ref{prop:facetapex+characterisation} and the duality from Theorem~\ref{thm:complementary-cones} we get an analogous characterisation of the vertices.

\begin{corollary} \label{cor:vertex+containment}
A point $v \in \TTclosed^d$ is a minimal generator of the monomial tropical polyhedron $\monomial{V}$ if and only if
\begin{enumerate}
\item there is no facet-apex $a \in A$ with $v < a$,
\item for each $i \in [d]$ with $v_i \neq -\infty$ there exists a facet-apex $a$ such that $v_i = a_i$ and $v_k < a_k$ for all $k \neq i$.
\end{enumerate}
\end{corollary}

We are now ready to define the notion of incidence among points and rays. 
Let $p$ be a point in $\TTmax^d$ and $q \in \TTmin^d$ the apex of the halfspace $H(-q)$ of the form \eqref{eq:principal+halfspace}.
We say $p$ is incident to $H(-q)$ if and only if the tropical inequality $\bigoplus_{k \in [d]} x_k \odot -q_k \geq 0$ is tight at the point $p$.
This leads to the following notion of incidence in $\TTclosed^d$.

\begin{definition} \label{def:vertex+apex+incidence}
A point $p \in \TTclosed^d$ is \emph{incident} with a point $q \in \TTclosed^d$ if $p \leq q$ and there exists some coordinate $i \in [d]$ such that $p_i = q_i$.
A ray $e^{(i)} \in \maxunit$ is incident with a point $q \in \TTclosed^d$ if $q_i = \infty$.
Additionally, each of the rays is incident with the far-apex. 
\end{definition}

Note that these two notions of incidence coincide when we homogenise, where points and rays are indistinguishable.
Consider the points $\widehat{e}^{(i)} = (-\infty, e^{(i)})$ in $\TTmax^{d+1}$ and $\widehat{q} = (0,q)$ in $\TTmin^{d+1}$.
The point $\widehat{e}^{(i)}$ is tight at the homogeneous tropical inequality $\bigoplus_{k \in [d]} x_k \odot -q_k \geq x_0 \odot - q_0$ if and only if $-q_i = -\infty$.
One can also check that these homogeneous unit vectors are tight with the homogenised far face inequality~\eqref{eq:far+face}.

We are now ready to consider the incidences in a monomial tropical polyhedron $\monomial{V}$.
Recall that the elements of $V$, the unique inclusion-wise minimal set needed to describe a monomial tropical polyhedron in the form~\eqref{eq:representation+sum+orthant}, are the \emph{vertices}.
The tropical unit vectors $\maxunit$ are its \emph{rays}.
Let $\overline{V} = V \cup \maxunit$ denote the set of vertices and rays of $\monomial{V}$.
Let $\overline{F} = F \cup b^{\infty}$ denote the set of facet-apices and the far-apex of $\monomial{V}$.

\begin{definition} \label{def:vertex-facet-incidence}
The \emph{vertex-facet incidence graph} is the bipartite graph on the node set $\overline{V} \sqcup \overline{F}$ with edge $(v,a)$ if $v$ is incident to the apex $a$.
\end{definition}

\begin{example} \label{ex:2d+monomial+polyhedron}
Consider the monomial tropical polyhedron $\monomial{V}$ shown in Figure~\ref{fig:2d+monomial+polyhedron}.
Its generators and facet-apices are
\[
V \ = \
\begin{bmatrix}
v^{(1)} & v^{(2)}
\end{bmatrix}
\ = \
\begin{bmatrix}
1 & 2 \\
2 & -\infty
\end{bmatrix}
\quad , \quad
F \ = \
\begin{bmatrix}
a^{(1)} & a^{(2)} & a^{(3)}
\end{bmatrix}
\ = \
\begin{bmatrix}
1 & 2 & \infty \\
\infty & 2 & -\infty
\end{bmatrix} \enspace .
\]
Its vertex-facet incidence graph is also shown in this figure.
\end{example}
\begin{figure}
\includegraphics[width=0.49\textwidth]{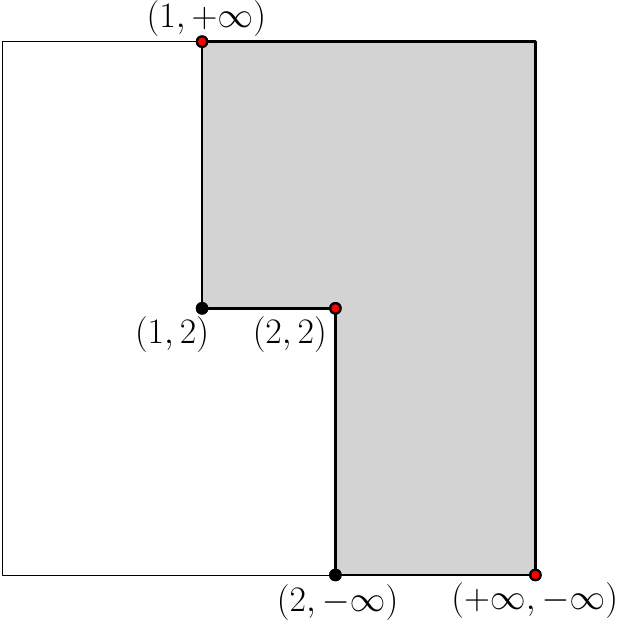}
\raisebox{0.2\height}{\resizebox{0.35\textwidth}{!}{\begin{tikzpicture}
	\node (v1) at (0,3){$v^{(1)}$};
	\node (v2) at (0,2){$v^{(2)}$};
	\node (e1) at (0,1){$e^{(1)}$};
	\node (e2) at (0,0){$e^{(2)}$};
	
	\node (a2) at (3,3){$a^{(1)}$};
	\node (a1) at (3,2){$a^{(2)}$};
	\node (b2) at (3,1){$a^{(3)}$};
	\node (inf) at (3,0){$b^{\infty}$};	
	
	\draw (v1) -- (a2);
	\draw (v1) -- (a1);
	\draw (e2) -- (a2);
	\draw (e2) -- (inf);
	\draw (e1) -- (inf);
	\draw (e1) -- (b2);
	\draw (v2) -- (b2);
	\draw (v2) -- (a1);
\end{tikzpicture}}}
\caption{$\monomial{V}$ and its vertex-facet incidence graph defined in Example \ref{ex:2d+monomial+polyhedron}.}
\label{fig:2d+monomial+polyhedron}
\end{figure}

\subsection{Vertex-facet lattice}

We briefly recall some necessary concepts from lattice theory, we refer to~\cite{Birkhoff:1967} for more details.
Let $L$ be a lattice with minimal element $\hat{0}$ and maximal element $\hat{1}$.
The \emph{atoms} of $L$ are the elements $a \in L$ such that $\hat{0} < b \leq a$ implies $a = b$.
The \emph{coatoms} of $L$ are the elements $c \in L$ such that $c \leq d < \hat{1}$ implies $c = d$.
We shall insist that our lattices are finite, therefore the atoms and coatoms of $L$ are well defined.
Furthermore, every finite lattice is \emph{complete} i.e., every subset has a greatest lower bound and a least upper bound.

Let $\cF = \{F_1,\dots, F_k\}$ be a finite collection of sets on some ground set $E$.
We define a closure operator $\clos{\cdot}$ on $E$ by
\[
\clos{A} = \bigcap_{A \subseteq F_i \in \cF} F_i \subseteq E \enspace .
\]
This induces a corresponding closure operator on $\cF$ given by
\[
\clos{\{F_{i_1},\dots,F_{i_k}\}} = \SetOf{F_i \in \cF}{F_i \supseteq \bigcap_{\ell = 1}^k F_{i_\ell}} \enspace .
\]
This construction describes a complete lattice $L$, given by either the closed subsets of $E$ ordered by inclusion or the closed subsets of $\cF$ ordered by reverse inclusion.
We consider these as two distinct labellings for $L$.

\begin{example} \label{ex:closure+from+bipartite+graph}
Let $G$ be a bipartite graph on disjoint node sets $U, V$.
Then $G$ describes a set system on the ground set $U$ via the neighbourhoods $\neighbour(v)$ of $v \in V$, i.e.,
\[
\SetOf{\neighbour(v) \subseteq U}{v \in V} \enspace .
\]
This gives rise to the lattice $L$ of closed sets of $U$.
Reversing the roles of $U$ and $V$ in this construction gives the alternative labelling of $L$ by closed sets of $V$.
\end{example}

\begin{example} \label{ex:cone+face+lattice}
Let $C$ be a cone with rays $R$ and facets $F$.
The ray-facet incidence relations of $C$ form a bipartite graph on the node set $R\sqcup F$ whose edges encode whether a ray is contained in a facet.
The lattice $L$ of closed sets is the \emph{face lattice} of $C$, and can be defined by the conic hull of rays or the intersection of facets.
We remark that the dual lattice of closed sets of $F$ ordered by inclusion is the face lattice of the dual cone $C^*$.

Any polyhedron $P \subseteq \RR^d$ can be realised as the intersection of a cone $C \subseteq \RR^{d+1}$ with the hyperplane $x_0 = 0$.
Furthermore, the face lattice of $C$ is isomorphic to the face lattice of $P$, and duality is preserved i.e., the face lattice of $C^*$ is isomorphic to the face lattice of $P^*$.
If $P$ is a polytope, this is the lattice induced by the vertex-facet incidence relations.
However if $P$ is unbounded, certain rays of $C$ will not be geometrically realised as vertices of $P$, rather as unbounded rays.
It is still beneficial to record the incidence data of these rays as it is crucial for recovering the face lattice of the dual polyhedron, see also~\cite{JoswigKaibelPfetschZiegler:2001}.
\end{example}

Example~\ref{ex:cone+face+lattice} motivates our definition for a face lattice of a monomial tropical polyhedron.
\begin{definition} \label{def:vertex-facet-lattice}
Consider the set system induced by the vertex-facet incidence graph, in the sense of Example~\ref{ex:closure+from+bipartite+graph}.
The lattice of closed sets is the \emph{vertex-facet lattice} $\cV$ of~$\monomial{V}$.
\end{definition}


\begin{figure}
\resizebox{0.6\textwidth}{!}{\begin{tikzpicture}[scale=1.5]
	\node (0) at (0,0.3){$\emptyset$};
	\node (v1) at (-1.5,1){$v^{(1)}$};
	\node (v2) at (-0.5,1){$v^{(2)}$};
	\node (e1) at (0.5,1){$e^{(1)}$};
	\node (e2) at (1.5,1){$e^{(2)}$};
	\node (v1v2) at (-1.5,2){$v^{(1)}v^{(2)}$};
	\node (v1e2) at (-0.5,2){$v^{(1)}e^{(2)}$};	
	\node (v2e1) at (0.5,2){$v^{(2)}e^{(1)}$};
	\node (e1e2) at (1.5,2){$e^{(1)}e^{(2)}$};
	\node (v1v2e1e2) at (0,2.7){$v^{(1)}v^{(2)}e^{(1)}e^{(2)}$};
	
	\draw (0) -- (v1);
	\draw (0) -- (v2);
	\draw (0) -- (e1);
	\draw (0) -- (e2);
	\draw (v1) -- (v1v2);
	\draw (v1) -- (v1e2);
	\draw (v2) -- (v1v2);
	\draw (v2) -- (v2e1);
	\draw (e1) -- (v2e1);
	\draw (e1) -- (e1e2);
	\draw (e2) -- (v1e2);
	\draw (e2) -- (e1e2);
	\draw (v1v2) -- (v1v2e1e2);
	\draw (v1e2) -- (v1v2e1e2);
	\draw (v2e1) -- (v1v2e1e2);
	\draw (e1e2) -- (v1v2e1e2);
\end{tikzpicture}}
\caption{The vertex-facet lattice of $\monomial{V}$ from Example \ref{ex:2d+monomial+polyhedron}.}
\label{fig:2d+vf+lattice}
\end{figure}
The vertex-facet lattice of the monomial tropical polyhedron described in Example \ref{ex:2d+monomial+polyhedron} is given in Figure \ref{fig:2d+vf+lattice}.
We note that the two-dimensional case is misleading in its simplicity.
The following example demonstrates that vertex-facet lattices can have seemingly pathological behaviour.

\begin{example} \label{ex:vif+model}
We continue with $\monomial{V}$ from Example~\ref{ex:model}.
The vertex-facet incidence graph is shown in Figure~\ref{fig:model+incidence}.
The corresponding vertex-facet lattice is large, therefore we show just a section of it.
Specifically, we show all maximal chains passing through $AB$.
Unlike the face lattice of a classical polytope, we note that not all maximal chains are of the same length.
As a result, vertex-facet lattices do not admit a grading and so we have no notion of the `combinatorial' dimension of a face.

\begin{figure}[ht!]%
\resizebox{0.3\textwidth}{!}{
\begin{tikzpicture}
	\node (a) at (0,8){$A$};
	\node (b) at (0,7){$B$};
	\node (c) at (0,6){$C$};
	\node (d) at (0,5){$D$};
	\node (e) at (0,4){$E$};
	\node (f) at (0,3){$F$};
	\node (e1) at (0,2){$e^{(1)}$};
	\node (e2) at (0,1){$e^{(2)}$};
	\node (e3) at (0,0){$e^{(3)}$};
	
	\node (r) at (5,8.5){$r$};
	\node (s) at (5,7.5){$s$};
	\node (t) at (5,6.5){$t$};
	\node (u) at (5,5.5){$u$};
	\node (v) at (5,4.5){$v$};
	\node (w) at (5,3.5){$w$};
	\node (x) at (5,2.5){$x$};
	\node (y) at (5,1.5){$y$};
	\node (z) at (5,0.5){$z$};
	\node (inf) at (5,-0.5){$\bm \infty$};
	
	\draw (a) -- (r);
	\draw (a) -- (s);
	\draw (a) -- (t);
	\draw (a) -- (u);
	\draw (a) -- (v);
	\draw (a) -- (w);
	\draw (a) -- (y);
	\draw (b) -- (r);
	\draw (b) -- (s);
	\draw (b) -- (t);
	\draw (b) -- (u);
	\draw (b) -- (v);
	\draw (b) -- (w);
	\draw (b) -- (z);
	\draw (c) -- (r);
	\draw (c) -- (w);
	\draw (c) -- (x);
	\draw (d) -- (r);
	\draw (d) -- (s);
	\draw (d) -- (x);
	\draw (e) -- (s);
	\draw (e) -- (t);
	\draw (e) -- (x);
	\draw (f) -- (t);
	\draw (f) -- (v);
	\draw (f) -- (x);
	\draw (e1) -- (u);
	\draw (e1) -- (y);
	\draw (e1) -- (z);
	\draw (e1) -- (inf);
	\draw (e2) -- (v);
	\draw (e2) -- (x);
	\draw (e2) -- (z);
	\draw (e2) -- (inf);
	\draw (e3) -- (w);
	\draw (e3) -- (x);
	\draw (e3) -- (y);
	\draw (e3) -- (inf);
	
\end{tikzpicture}%
}
\resizebox{0.68\textwidth}{!}{
\begin{tikzpicture}
\node (0) at (0,0){$\emptyset$};
\node (a) at (-1,1){$A$};
\node (b) at (1,1){$B$};
\node (ab) at (0,2){$AB$};
\node (abc) at (-1,4){$ABC$};
\node (abd) at (1,4){$ABD$};
\node (abe) at (3,4){$ABE$};
\node (abf) at (5,4){$ABF$};
\node (abe1) at (-5,6){$ABe^{(1)}$};
\node (abcd) at (0,6){$ABCD$};
\node (abde) at (2,6){$ABDE$};
\node (abef) at (4,6){$ABEF$};
\node (abfe2) at (6,6){$ABFe^{(2)}$};
\node (abce3) at (-2,6){$ABCe^{(3)}$};
\node (abcdefe1e2e3) at (0,8){$ABCDEFe^{(1)}e^{(2)}e^{(3)}$};

\draw (0) -- (a);
\draw (0) -- (b);
\draw (a) -- (ab);
\draw (b) -- (ab);
\draw (ab) -- (abc);
\draw (ab) -- (abd);
\draw (ab) -- (abe);
\draw (ab) -- (abf);
\draw (ab) -- (abe1);
\draw (abc) -- (abcd);
\draw (abd) -- (abcd);
\draw (abd) -- (abde);
\draw (abe) -- (abde);
\draw (abe) -- (abef);
\draw (abf) -- (abef);
\draw (abf) -- (abfe2);
\draw (abc) -- (abce3);
\draw (abcd) -- (abcdefe1e2e3);
\draw (abde) -- (abcdefe1e2e3);
\draw (abef) -- (abcdefe1e2e3);
\draw (abfe2) -- (abcdefe1e2e3);
\draw (abce3) -- (abcdefe1e2e3);
\draw (abe1) -- (abcdefe1e2e3);

\end{tikzpicture}%
}
\caption{Pictured left is the vertex-facet incidence graph of the monomial tropical polyhedron induced by the model. Pictured right is the subposet of the vertex-facet lattice given by all chains containing $AB$.}%
\label{fig:model+incidence}%
\end{figure}

\end{example}

Section \ref{sec:face+posets} is dedicated to comparing the vertex-facet lattice $\cV$ to existing posets.
To do this, it will be necessary to consider a subposet of $\cV$.

\begin{definition} \label{def:affine+part+vif}
  The \emph{affine part of $\cV$} is the induced subposet of $\cV$ whose elements are closed subsets $S \subseteq \overline{V}$ such that $S \cap V \neq \emptyset$ or $S = \emptyset$.
\end{definition}
We shall see that these are the elements that have geometric representations in $\TTclosed^d$ via their tropical barycenter described in Section~\ref{subsec:lcm+lattice}. 

\begin{lemma}
The affine part of the vertex-facet lattice is a lattice.
\end{lemma}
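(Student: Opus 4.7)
The plan is to verify two things: that joins of affine elements remain in the affine part, and that every pair of affine elements admits a greatest lower bound there. Once both are established, a finite poset with all pairwise meets and joins is automatically a lattice.

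First I would handle joins. Given $A_1, A_2$ in the affine part, their join in $\cV$ is $\clos{A_1 \cup A_2}$. Either at least one of them contains a vertex $v \in V$, in which case $\clos{A_1 \cup A_2} \supseteq A_1 \cup A_2$ contains $v$ and is therefore affine, or both are empty, in which case the join is $\emptyset$. In either case the join lies in the affine part, so joins are simply inherited from $\cV$.

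For meets, let $M = A_1 \cap A_2$ denote the meet in $\cV$; since the intersection of closed sets is closed, $M$ is a well-defined element of $\cV$. If $M$ contains a vertex or $M = \emptyset$, then $M$ itself belongs to the affine part and serves as the affine meet. Otherwise $M$ is a nonempty subset of $\maxunit$ consisting purely of rays; in that case any affine lower bound $B$ of $A_1$ and $A_2$ satisfies $B \subseteq M$, which forces $B \cap V = \emptyset$ and hence $B = \emptyset$, so $\emptyset$ is the greatest affine lower bound.

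The main subtlety to watch is the case in which $A_1 \cap A_2$ consists purely of rays: this is precisely where the affine meet fails to coincide with the meet in $\cV$, and it is the explicit inclusion of $\emptyset$ in the definition of the affine part that allows the meet to be taken. Outside this situation the affine meets and joins agree with those of $\cV$, so the affine part sits inside $\cV$ almost as a sublattice, differing from one only through this collapse to the minimum element when no vertex is shared.
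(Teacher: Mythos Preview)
Your proposal is correct and follows essentially the same approach as the paper's proof: show the join in $\cV$ stays affine because it contains $A_1\cup A_2$, and show the meet either stays affine or collapses to $\emptyset$ when it consists only of rays. Your version is slightly more careful than the paper's, explicitly separating the case $M=\emptyset$ and justifying why any affine lower bound must equal $\emptyset$ when $M\subseteq\maxunit$.
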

\begin{proof}
Let $S_1,S_2$ be closed subsets of $\overline{V}$ such that $S_i \cap V \neq \emptyset$ or $S_i = \emptyset$, it suffices to show the affine part contains a unique least upper bound and greatest lower bound of $S_1,S_2$.
The least upper bound of $S_1, S_2$ in $\cV$ contains $S_1 \cup S_2$ (with equality if $S_1 = S_2 = \emptyset$), therefore also satisfies this condition.
Consider the greatest lower bound $S$ of $S_1,S_2$ in $\cV$, either it satisfies $S \cap V \neq \emptyset$, or it is not contained in the affine part.
In the latter case, the greatest lower bound of $S_1$ and $S_2$ is the empty set in the affine part.
\end{proof}

\begin{remark}
The facets in the sense of Joswig~\cite[\S 3]{Joswig:2005} are exactly the halfspaces given by the facet-apices.
The construction described around~\cite[Theorem 3.7]{Joswig:2005}, extended to tropical polyhedra and considered as a lattice, yields a face poset isomorphic to the vertex-facet lattice.
This follows since the main condition of facets to form a face is that their meet is not empty.
However, we refrain from assigning a geometric object due to the discrepancy described in Section~\ref{subsec:max+min+poset} between the two possible labels arising from the duality between vertices and facet-apices.

Similar issues were discussed by Develin \& Yu concerning the facets by Joswig in~\cite{DevelinYu:2007}.
Consequently, they introduced a concept of face in terms of lifts of tropical polyhedra (see Section~\ref{sec:resolutions} for the definition of a lift).
Explicitly, a face is a minimal subset of the boundary which corresponds to faces of a lift of the same dimension, for any choice of lift.
This notion is rather coarse as it tries to unify the face structure of \emph{all} lifts.
This makes it less suitable for our framework of face posets. 
\end{remark}

\begin{remark}
  The paper~\cite{DaechertKlamrothLacourVanderpooten:2017} gives an efficient algorithm for computing the optima of a discrete multicritera optimisation problem.
  From our perspective, their result uses the combinatorics of the vertex-facet incidences to update the search region in the computation of the optima.
  Their update procedure is to introduce a new vertex and efficiently update the corresponding vertex-facet incidences. 
  Indeed, \cite[\S 3 \& 4]{DaechertKlamrothLacourVanderpooten:2017} shows a clever way to decompose and traverse the dual graph of the facet-apices. 
\end{remark}

\subsection{Intermediate faces}
As Proposition~\ref{prop:facetapex+characterisation} and Corollary~\ref{cor:vertex+containment} give us a geometric characterisation of the vertices and facet-apices, the next natural question is whether we can characterise all faces of the vertex-facet lattice.
The following example suggests this question is more difficult by demonstrating how degenerate some faces can be.
\begin{example} \label{ex:degenerate+polyhedra}
  We consider two small examples which highlight the degenerate nature of monomial tropical polyhedra.
  Consider the generating sets
  \begin{align*}
  V_1 \ = \
  \begin{blockarray}{ccc}
    a & b & c \\
  \begin{block}{(ccc)}
    1 & 0 & 0 \\
    0 & 1 & 0 \\
    0 & 0 & 1 \\
  \end{block}
  \end{blockarray}
  \qquad
  V_2 \ = \
  \begin{blockarray}{ccc}
    u & v & w \\
  \begin{block}{(ccc)}
    0 & 1 & 1 \\
    1 & 0 & 1 \\
    1 & 1 & 0 \\
  \end{block}
  \end{blockarray}
  \enspace .
  \end{align*}
The monomial tropical polyhedra $\monomial{V_1}$ and $\monomial{V_2}$ are displayed in Figure \ref{fig:degenerate+polyhedra}.
The first, $\monomial{V_1}$, has the facets 
\[
abc \ , \ abe^{(1)}e^{(2)} \ , \ ace^{(1)}e^{(3)} \ , \ bce^{(2)}e^{(3)} \enspace ,
\]
and the far-face.
Note that $\monomial{V_1}$ is degenerate as perturbations of $V_1$ yield more facets.
However, its vertex-facet lattice is realisable as the face lattice of the classical polyhedron $\conv(a,b,c) + \RR^3_{\geq 0}$.
This can also be seen as the dual monomial tropical polyhedron is generic. 
  
The second monomial tropical polyhedron $\monomial{V_2}$ has the facets 
\[
uvwe^{(1)} \ , \ uvwe^{(2)} \ , \ uvwe^{(3)} \ , \ ue^{(2)}e^{(3)} \ , \ ve^{(1)}e^{(3)} \ , \ we^{(1)}e^{(2)} \enspace ,
\]
along with the far-face.
$\monomial{V_2}$ is also degenerate, but unlike the previous example its vertex-facet lattice is not realisable by a classical polyhedron.
This is due to the element $uvw$, the intersection of the facets $uvwe^{(1)}, uvwe^{(2)}, uvwe^{(3)}$, being an `edge' containing three vertices.
This example has been identified as problematic from the perspective of monomial ideals and orthogonal surfaces, see \cite[Example 3.8]{EagonMillerOrdog:2019} and \cite[Section 3.1]{Kappes:2006}.
\end{example}
\begin{figure}
\includegraphics[width=0.7\textwidth]{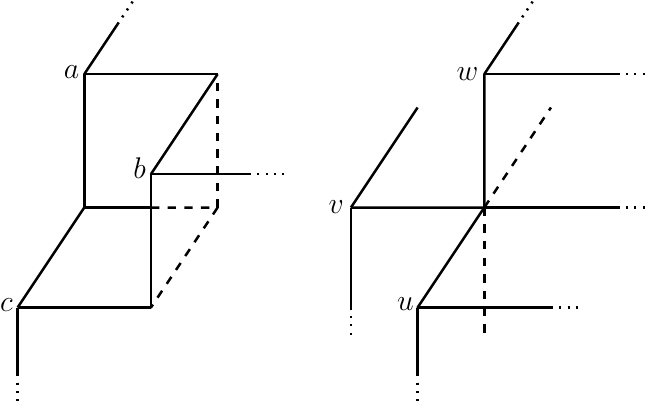}
\caption{Two degenerate monomial tropical polyhedra, $\monomial{V_1}$ and $\monomial{V_2}$ from Example \ref{ex:degenerate+polyhedra}.}
\label{fig:degenerate+polyhedra}
\end{figure}

The natural way to characterise faces of ordinary polyhedra is via minimising linear functions.
Given $c \in \TTmin^d$, consider the tropical linear functional
\begin{equation*}
\begin{split}
\varphi_c : \TTmax^d &\rightarrow \TTmax \\
p &\mapsto \bigoplus_{i \in [d]} -c_i \odot p_i = \max_{i \in [d]}(p_i - c_i) \end{split} \enspace ,
\end{equation*}
We say a vertex $v\in V$ minimises $\varphi_c$ if $\varphi_c(v) \leq \varphi_c(w)$ for all $w \in V$.
Similar to our previous treatment of tropical rays, we say $e^{(i)}$ minimises $\varphi_c$ if $c_i = \infty$.
The following proposition shows every face of the vertex-facet lattice is the minimum of some tropical linear functional.

\begin{proposition} \label{prop:linear+functional+face}
  For each closed set $S \subseteq \overline{V}$ in the affine part of the vertex-facet lattice, there exists some $c \in \TTmin^d$ such that every $v \in S$ minimises $\varphi_c$ and every $w \in \overline{V} \setminus S$ does not minimise $\varphi_c$.
\end{proposition}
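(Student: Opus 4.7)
The plan is to dualise: starting from the closed set $S$ of vertices/rays, pass to the set $T$ of facet-apices incident to all of $S$, and then build $c$ by taking the coordinatewise minimum of the apices in $T$. The tropical identity $\max_i \min_a \bigl(p_i - a_i\bigr) = \max_{a} \max_i \bigl(p_i - a_i\bigr)$ will convert this into a maximum of the facet-apex functionals $\varphi_a$, at which point incidence with $\monomial{V}$'s halfspaces does the rest.

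More concretely, I would first set $T := \{a \in F : S \subseteq N(a)\}$, where $N(a)$ denotes the elements of $\overline{V}$ incident to $a$. Since $S$ lies in the affine part, $S$ either is empty or contains a vertex; in the non-empty case the far-apex $b^{\infty}$ (which is incident to no vertex) is automatically absent from $T$, so restricting to $F$ loses no information and $S = \bigcap_{a \in T} N(a)$ by closedness. Define $c \in \TTclosed^d$ by $c_i := \min_{a \in T} a_i$, with the convention $\min \emptyset = \infty$. Writing out the definition gives
\[
\varphi_c(p) \;=\; \max_{i \in [d]} (p_i - c_i) \;=\; \max_{a \in T}\, \max_{i \in [d]}(p_i - a_i) \;=\; \max_{a \in T} \varphi_a(p) \enspace .
\]

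For a vertex $v \in V$, since $\monomial{V} \subseteq H(-a)$ for every $a \in T$, each $\varphi_a(v) \geq 0$, with equality precisely when $v$ is incident to $a$ (this is the translation of Proposition~\ref{prop:facetapex+characterisation} into functional form). Hence $\varphi_c(v) = 0$ iff $v$ is incident to every $a \in T$, iff $v \in S$, while $\varphi_c(v) > 0$ otherwise; in particular the minimum of $\varphi_c$ over $V$ equals $0$ and is achieved exactly on $S \cap V$. For a ray $e^{(i)}$, by construction $c_i = \infty$ iff $a_i = \infty$ for every $a \in T$, which by Definition~\ref{def:vertex+apex+incidence} is exactly the condition that $e^{(i)}$ is incident to every apex in $T$, i.e.\ $e^{(i)} \in S$. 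This matches the convention that $e^{(i)}$ minimises $\varphi_c$ iff $c_i = \infty$.

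The main obstacle is bookkeeping rather than depth: one must verify that the ``dual'' closed set $T$ really is the one governing the closure of $S$, and that boundary apices (those with an $-\infty$ coordinate) cause no trouble — they do not, because their associated $\varphi_a$ still satisfies $\varphi_a(v) \geq 0$ on $\monomial{V}$ with equality exactly on the boundary face they define, and they contribute only $\infty$ entries to $c$ in coordinates where all vertices of $S$ are $-\infty$. Once this is checked, the proposition follows from the two claims above.
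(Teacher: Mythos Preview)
Your proposal is correct and follows essentially the same route as the paper: define $c$ as the coordinatewise minimum of the apices in the dual closed set $T$, then check vertices and rays separately. Your explicit identity $\varphi_c = \max_{a\in T}\varphi_a$ is a clean way to package what the paper does directly. One small slip: in your opening plan the displayed ``tropical identity'' is misstated (the left-hand side should read $\max_i \max_a(p_i-a_i)$, not $\max_i \min_a(p_i-a_i)$), but your concrete computation a few lines later is correct.
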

\begin{proof}
Let $S$ be a closed set and $T \subseteq \overline{F}$ its corresponding closed set of facets.
Define $c := \min\SetOf{a}{a \in T}$.
For $v \in S \cap V$, we have $v \leq c$ componentwise but $v \geq c$ for at least one component, hence $\max_{i \in [d]}(v - c) = 0$.
For each $w \in V \setminus S$, there is some $a \in T$ such that $w_i > a_i$ for at least one $i \in [d]$.
This implies $\max_{i \in [d]}(w - p) > 0$, and therefore the only elements of $V$ that minimise $\varphi_c$ are those in $S$.
Note that $e^{(i)} \in S$ if and only if all facet-apices of $T$ have $a_i = \infty$, which is equivalent to $e^{(i)}$ minimises $\varphi_c$.
\end{proof}

Unfortunately, the characterisation of faces of ordinary polyhedra does not carry over to monomial tropical polyhedra.
The following example demonstrates that the reverse direction of Proposition~\ref{prop:linear+functional+face} is not correct. 
It extends~\cite[Remark 3.10]{Joswig:2005} by demonstrating that also our combinatorially defined faces do not fulfil this desirable property. 

\begin{figure}
  \begin{tikzpicture}[scale=0.9] 

  \tikzset{Linesegment/.style = {black,thick}}

  
  \coordinate (a) at (1,-4,-1); 
  \coordinate (b) at (2,-3,-2); 
  \coordinate (c) at (2,-2,-3); 
  \coordinate (d) at (1,-1,-4); 

  \node at (a) [label=left:$A$]{};
  \node at (b) [label=above:$B$]{};
  \node at (c) [label=left:$C$]{};
  \node at (d) [label=left:$D$]{};

  \draw[Linesegment] (a) -- (1,-4,-4); 
  \draw[Linesegment] (d) -- (1,-4,-4); 
  \draw[Linesegment] (b) -- (2,-4,-2); 
  \draw[Linesegment] (2,-4,-2) -- (5,-4,-2);
  \draw[Linesegment] (b) -- (2,-3,-3); 
  \draw[Linesegment] (c) -- (2,-3,-3); 
  \draw[Linesegment] (a) -- (5,-4,-1);
  \draw[Linesegment] (a) -- (1,-5,-1);
  \draw[Linesegment] (d) -- (1,-1,-5);
  \draw[Linesegment] (d) -- (5,-1,-4);
  \draw[Linesegment] (b) -- (5,-3,-2);
  \draw[Linesegment] (c) -- (5,-2,-3);
  \draw[Linesegment] (c) -- (2,-2,-4);
  \draw[Linesegment] (2,-2,-4) -- (5,-2,-4);
  \draw[Linesegment] (2,-3,-3) -- (5,-3,-3); 
  \draw[Linesegment, dashed] (1,-4,-4) -- (2,-4,-4);
  \draw[Linesegment, dashed] (2,-2,-4) -- (2,-4,-4);
  \draw[Linesegment, dashed] (2,-4,-2) -- (2,-4,-4);

  \node[circle, fill=red, inner sep=0.7] at (2,-4,-3){}; 
  \draw[Linesegment, dashed, red] (2,-4,-3) -- (2,-1,-3);
  \draw[Linesegment, dashed, red] (2,-4,-3) -- (2,-4,0);
  \draw[Linesegment, dashed, red] (2,-4,-3) -- (-2,-4,-3);

\end{tikzpicture}
  \caption{$\monomial{V}$ from Example \ref{ex:vertices+incident+linear+functional+no+face}. The red point is the apex of the tropical linear functional minimised at $ABC$.}
  \label{fig:not+face+linear+functional}
\end{figure}

\begin{example} \label{ex:vertices+incident+linear+functional+no+face}
Consider the monomial tropical polyhedron $\monomial{V}$ depicted in Figure~\ref{fig:not+face+linear+functional} whose generators are
\[
V \ = \
\begin{blockarray}{cccc}
A & B & C & D \\
\begin{block}{(cccc)}
1 & 2 & 3 & 4 \\ 4 & 3 & 2 & 1 \\ 1 & 2 & 2 & 1 \\
\end{block} \end{blockarray} \enspace .
\]
Its facets are
\[
Ae^{(2)}e^{(3)} \ , \ De^{(1)}e^{(3)} \ , \ ADe^{(1)}e^{(2)} \ , \ ABCD \ , \ CDe^{(3)} \ , \ BCe^{(3)} \ , \ ABe^{(3)} \enspace ,\]
along with the far-face.
Its other (affine) faces are
\[
AB \ , \ AD \ , \ BC \ , \ CD \ , \ Ae^{(3)} \ , \ Ae^{(2)} \ , \ Be^{(3)} \ , \ Ce^{(3)} \ , \ De^{(1)} \ , \ De^{(3)} \enspace .
\]
The tropical linear functional $\max(x_1 - 3,x_2 - 4, x_3 - 2)$ is minimised on the set of vertices $ABC$, however this is not a face in the vertex-facet lattice.
\end{example}

\section{Face posets} \label{sec:face+posets}

A closed monomial tropical polyhedron $\closedmonomial{V}$ is, as a subset of $\TTclosed^d$, equipped with the componentwise partial order of $\TTclosed^d$, which extends the natural partial order of $\RR^d$.
All the following posets are naturally subposets of $\TTclosed^d$ equipped with that order. 

\subsection{Covector decomposition} \label{subsec:covector+decomposition}

In~\cite{JoswigLoho:2016}, the notion of covector graphs introduced in~\cite{DevelinSturmfels:2004} were used to study the combinatorics of point configurations at infinity, extending results from~\cite{FinkRincon:2015}.
We introduce that notion in a way slightly adapted to our purposes.

Given a point $v \in \TTmax^d$, its \emph{$i$th affine sector} is defined as
\begin{align} \label{eq:aff+point+sector}
\begin{split}
S_i(v) &= \left(\bigcap_{k \in [d]} \SetOf{z \in \TTmax^d}{z_i + v_k \leq z_k + v_i}\right) \cap \SetOf{z \in \TTmax^d}{z_i \leq v_i} \\
S_0(v) &= \bigcap_{k \in [d]} \SetOf{z \in \TTmax^d}{v_k \leq z_k}
\end{split} \enspace .
\end{align}
Given a ray $w \in \TTmax^d$, its $i$th affine sector is
\begin{align} \label{eq:aff+ray+sector}
S_i(w) = \bigcap_{k \in [d]} \SetOf{z \in \TTmax^d}{z_i + \added{w_k} \leq z_k + \added{w_i}} \enspace \quad i \in [d] ,
\end{align}
where $S_0(w)$ is empty.
These definitions are compatible with the usual notion of sector, which we discuss further in Section~\ref{sec:representation+by+mono+trop+poly}.

This leads to the notion of \emph{affine covector graph}.
Given a finite set of points and rays $V, W \subseteq \TTmax^d$, we define for $p \in \TTmax^d$ the set
\[
\neighbour_p(v) = \SetOf{i \in [d]_0}{p \in S_i(v)} \qquad \text{ for } v \in V\cup W \enspace .
\]
Recall that the definition of affine sector $S_i(v)$ differs depending on whether $v$ is a point or a ray.
The \emph{covector graph} $G_p$ of $p$ with respect to $V\cup W$ is the bipartite graph on $(V \cup W) \times [d]_0$ with edges $(v,i)$ for $i \in \neighbour_p(v)$.
Note that we will only be considering covectors with respect to monomial tropical polyhedra, and so our set of rays will always be $W = \maxunit$.
These rays describe the boundary strata: a point $p$ is in $S_i(e^{(k)})$ for $i \neq k$ if and only if $p_i = -\infty$.
\added{Note that every point $p \in \TTmax^d$ is contained in $S_i(e^{(i)})$.}

We can define covector graphs for points in $\TTclosed^d$ in the following way.
For any point $p \in \TTclosed^d$, consider $\tilde{p} \in \TTmax^d$ defined as
\[
\tilde{p_i} \ = \
\begin{cases}
p_i & \text{ if } p_i \in \TTmax \\
M & \text{ if } p_i = \infty \\
\end{cases}
\]
for arbitrarily large $M \in \RR$.
We define the covector graph $G_p$ as 
\[
G_p = G_{\tilde{p}} \cup \SetOf{(e^{(i)},0)}{p_i = \infty} \enspace .
\]
The intuition behind this definition is as follows.
\added{Let $\widehat{e}^{(i)} = (-\infty,e^{(i)})$ be the homogenisation of $e^{(i)}$.
In $\TTmax^{d+1}$, the zeroth sector of $\widehat{e}^{(i)}$ is defined to be}
 \[
S_0(\widehat{e}^{(i)}) = \SetOf{z \in \TTmax^{d+1}}{z_0 \leq z_i - \infty}.
\]
In affine space $\TTmax^d$, the zeroth coordinate is always equal to zero, therefore we can formally define $p \in S_0(e^{(i)})$ if and only if $p_i = \infty$.

Covector graphs have a natural poset structure given by containment.
However, we will consider another partial ordering derived from the natural partial order on $\TTclosed^d$.
Our elements will be the \emph{pseudovertices}, the points $p \in \TTclosed^d$ whose covector graphs $G_p$ are connected.
\added{Via the procedure described in~\cite{DevelinSturmfels:2004, JoswigLoho:2017}, one can obtain these points geometrically as follows.
If one replaces each principal halfspace by the corresponding tropical hyperplane, one gets a polyhedral decomposition of $\TTmax^d$ containing $\monomial{V}$ as a subcomplex.
The pseudovertices are precisely the zero-dimensional cells of this decomposition.
While not all of them are vertices of $\monomial{V}$ in the sense of tropical convexity, they are vertices of a polyhedral complex whose support is $\monomial{V}$.}
We also include the unique minimal element $\bm{-\infty} = (-\infty,\dots,-\infty)$; note that the unique maximal element is the pseudovertex $\bm{\infty} = (\infty,\dots,\infty)$.
We let $\cP$ be the set of pseudovertices including $\bm{-\infty}$ with partial order given by the standard partial order $\leq$ on $\RR^d$ extended to $\TTclosed^d$.
We call $(\cP,\leq)$ the \emph{pseudovertex poset}.

Note that $\cP$ is not a lattice as the following example demonstrates.
\begin{figure}
\includegraphics[width=\textwidth]{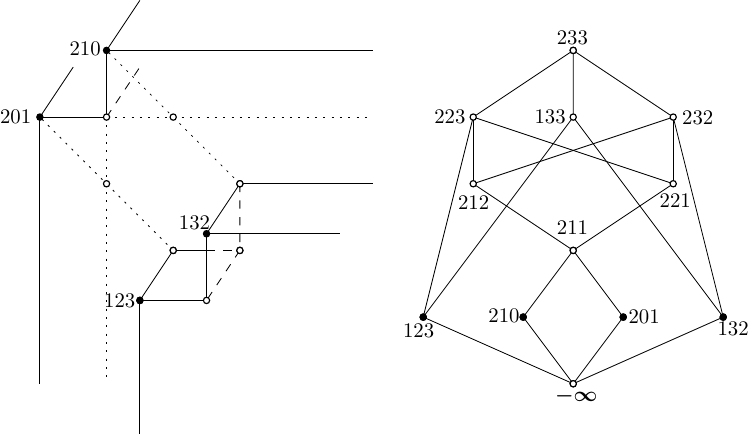}
\caption{The monomial tropical polyhedron described in Example \ref{ex:pseudovertex+non+lattice} and the interval $[\bm{-\infty},(2,3,3)]$ in its pseudovertex poset.}
\label{fig:pseudovertex+non+lattice}
\end{figure}
\begin{example} \label{ex:pseudovertex+non+lattice}
Let $\monomial{V}$ be the monomial tropical polyhedron generated by
\[
V = \{(2,1,0),(2,0,1),(1,2,3),(1,3,2)\} \enspace .
\]
Figure~\ref{fig:pseudovertex+non+lattice} shows $\monomial{V}$ and an interval in its pseudovertex poset.
In particular, elements $(2,2,1)$ and $(2,1,2)$ have no unique least upper bound.
\end{example}

\begin{remark} \label{rem:connection+products+simplices}
  The duality between tropical point configurations and subdivisions of products of simplices established in~\cite{DevelinSturmfels:2004} emphasises a different viewpoint on the partial orderings of covector graphs.
  The first `natural' choice of containment of covector graphs translates to a reversed order for the corresponding cells in the subdivision of a product of two simplices.
  However, the interpretation of the partial order on the pseudovertices does not have such a clear geometric analogue.
  Note that a combinatorial abstraction of the ordering derived from the graph structure is used to measure progress in abstract tropical linear programming~\cite{Loho:2020}. 
\end{remark}

While a natural first choice of poset to describe the `face structure' of $\monomial{V}$, Example \ref{ex:pseudovertex+non+lattice} highlights that the pseudovertex poset is not the correct choice.
Additionally, covector graphs encode a lot of information that is not relevant as the structure of $\monomial{V}$ is mostly encoded via the zeroth sectors and their interaction with other sectors.
We highlight this in Figure~\ref{fig:covectors+too+fine}. 
This motivates the study of a coarser poset that only encodes this information.

\begin{figure}
\includegraphics[width=0.34\textwidth]{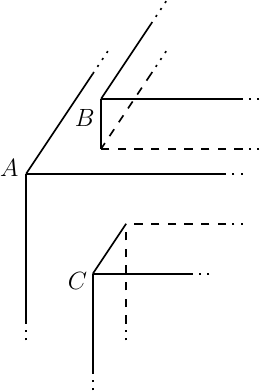}
\caption{A monomial tropical polyhedron with generators $A,B,C$. Vertices $B$ and $C$ can move freely without changing the vertex-facet lattice, but may drastically change the underlying covector graphs.}
  \label{fig:covectors+too+fine}
\end{figure}

\subsection{Max-lattice} \label{subsec:lcm+lattice}

We start with a lattice that is significantly simpler than the pseudovertex poset but still fine enough to capture all other posets in this section.

In the following we introduce the \emph{$\max$-lattice of the monomial tropical polyhedron $\monomial{V}$}. 
We define the \emph{modified $\max$-tropical unit vectors} $\modmaxunit = \{\bar{e}^{(1)}, \ldots, \bar{e}^{(d)}\} \subset \TTclosed^{d}$ as
\[
\bar{e}^{(i)}_{k} \ = \
\begin{cases}
  +\infty  & \mbox{ if }  i = k \\
  -\infty & \mbox{ if } i \in [d] \setminus k
\end{cases}
\enspace .
\]
For each $S \subseteq V \cup \modmaxunit$ such that $S \cap V \neq \emptyset$, we define the elements
\[
m_S = \max\SetOf{v}{v \in S}
\]
where $\max$ is taken componentwise.
The $\max$-lattice is the set of elements of this form, along with the unique minimal element $m_{\emptyset} = \bm{-\infty}$ ordered by the standard partial order on $\TTclosed^d$.
An element $m$ of the $\max$-lattice is naturally labelled by the maximal inclusionwise set $S$ such that $m = m_S$.
We note that the unique maximal element is $m_{V \cup \modmaxunit} = \bm{\infty}$.
\added{This forms a lattice as each pair of elements $m_S, m_T$ have a unique join and meet defined as
\begin{align*}
m_S \vee m_T &= \max\SetOf{v}{v \in S\cup T} = m_{S \cup T} \, , \\
m_S \wedge m_T &= \max\SetOf{v}{v \in S\cap T} = m_{S \cap T} \, .
\end{align*}}

There is geometric intuition behind the definition of the $\max$-lattice.
Let $X$ be a subset of $\TTclosed^d$, the \emph{$\max$-tropical barycenter} of $X$ is
\begin{equation} \label{eq:tropical+barycenter}
\tbary(X) = \sup\SetOf{x}{x \in X} \enspace ,
\end{equation}
where $\sup$ is taken componentwise.
This definition is obtained by directly tropicalising the classical notion of barycenter, and appears as an important tool in~\cite{ABGJ:2018}. 
Note that we must work in $\TTclosed^d$, else the tropical barycenter may not be well-defined.

Given $S \subseteq V \cup \modmaxunit$, we define the tropical polyhedron:
\[
P_S = \tconv{v\ |\ v \in S \cap V} \oplus \tcone{e^{(k)}\ |\ \bar{e}^{(k)} \in S \cap \modmaxunit} \enspace .
\]
If $S$ has the additional property that $S \cap V \neq \emptyset$, then $S$ is a tropical subpolyhedron of $\monomial{V}$.
The following lemma shows the max-lattice contains geometric information of $\monomial{V}$ in terms of tropical barycenters.
\begin{lemma} \label{lem:tropical+barycenter}
Let $S \subseteq V \cup \modmaxunit$ with $S \cap V \neq \emptyset$. Then
\[
m_S = \tbary(P_S) \enspace .
\]
\end{lemma}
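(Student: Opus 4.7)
The plan is to prove the two inequalities $m_S \leq \tbary(P_S)$ and $m_S \geq \tbary(P_S)$ componentwise in $\TTclosed^d$, working directly from the definitions of $\max$, $\sup$, and the Minkowski decomposition of $P_S$.

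For the inequality $m_S \leq \tbary(P_S)$, I would argue as follows. Every vertex $v \in S \cap V$ lies in $P_S$ itself (as the trivial tropical convex combination), so $v \leq \tbary(P_S)$ componentwise. This already handles the contributions of vertices to $m_S$. For each $\bar{e}^{(k)} \in S \cap \modmaxunit$, I need $\tbary(P_S)_k = +\infty$, since $\bar{e}^{(k)}_k = +\infty$ and $\bar{e}^{(k)}_i = -\infty$ for $i \neq k$ (so the latter coordinates impose no constraint). Fix any $v \in S \cap V$ (which exists by hypothesis); then $v \oplus (\lambda \odot e^{(k)}) \in P_S$ for every $\lambda \in \RR$, and the $k$th coordinate of this point tends to $+\infty$ as $\lambda \to +\infty$. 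Taking the componentwise supremum over $P_S$ gives $\tbary(P_S)_k = +\infty$ as required.

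For the reverse inequality, I would take an arbitrary $p \in P_S$ and show $p \leq m_S$ componentwise. Any such $p$ has the form
\[
p \;=\; \bigoplus_{v \in S \cap V} \lambda_v \odot v \;\oplus\; \bigoplus_{\bar{e}^{(k)} \in S \cap \modmaxunit} \mu_k \odot e^{(k)}
\]
with $\bigoplus_v \lambda_v = 0$, hence $\lambda_v \leq 0$ for every $v$. Coordinatewise this gives
\[
p_i \;=\; \max_{v \in S \cap V}(\lambda_v + v_i) \;\oplus\; \max_{\bar{e}^{(k)} \in S \cap \modmaxunit}(\mu_k + (e^{(k)})_i) \enspace .
\]
The first term is bounded by $\max_{v \in S \cap V} v_i$, which is at most $(m_S)_i$. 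For the second term, $(e^{(k)})_i = -\infty$ unless $k = i$, so it contributes only when $\bar{e}^{(i)} \in S$, in which case $(m_S)_i = +\infty$ automatically dominates. Thus $p_i \leq (m_S)_i$ for every $i$, and taking the supremum over $p \in P_S$ yields $\tbary(P_S) \leq m_S$.

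Combining the two inequalities gives equality in $\TTclosed^d$. There is no substantial obstacle here; the only care required is handling the $\pm\infty$ coordinates cleanly and remembering that in the max-lattice the modified unit vector $\bar{e}^{(k)}$ plays the role of the ray $e^{(k)}$ with its $k$th coordinate sent to $+\infty$, which is exactly what the componentwise supremum over the tropical conic hull produces.
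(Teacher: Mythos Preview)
Your proof is correct and follows essentially the same approach as the paper's. The paper's argument is a terse one-liner observing that the ``largest'' points of $P_S$ are obtained by setting all vertex coefficients to $0$ and letting the ray coefficients $\lambda_k$ tend to $+\infty$; your two-inequality argument is the careful unpacking of exactly this observation, with the reverse inequality making precise why those points are indeed the largest.
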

\begin{proof}
The `largest' points of $P_S$ are obtained by the representation
\[
\left(\bigoplus 0 \odot v \right) \oplus \left(\bigoplus \lambda_k \odot e^{(k)} \right) \quad v \in S \cap V \ , \ \bar{e}^{(k)} \in S \cap \modmaxunit 
\]
where $\lambda_k$ are arbitrarily large.
Taking the supremum of these points gives the desired result.
\end{proof}

\begin{example} \label{ex:max+lattice}
Consider the monomial tropical polyhedron $\monomial{V}$ generated by $V = \{(1,2),(2,-\infty)\}$ and its $\max$-lattice shown in Figure \ref{fig:max+lattice}.
All subsets of $V \cup \maxunit$ whose intersection with $V$ is non-empty have a corresponding point in $\closedmonomial{V}$, in particular the vertices and facet-apices are contained.
\end{example}
\begin{figure}
\includegraphics[width=0.49\textwidth]{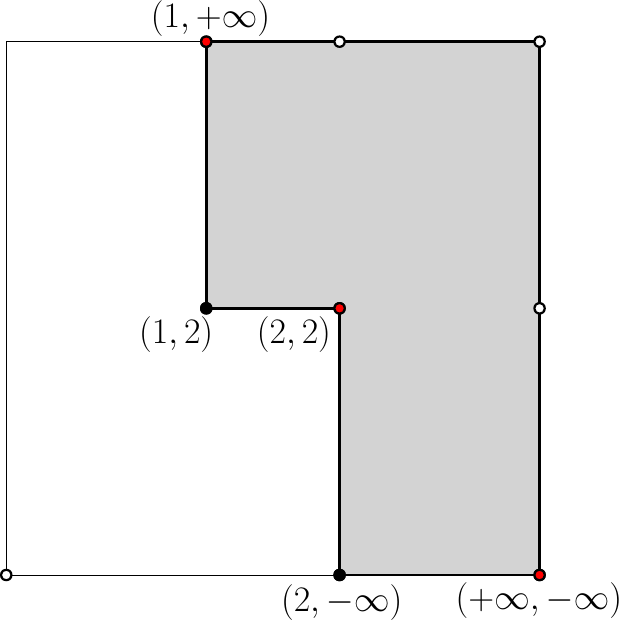}
\resizebox{0.49\textwidth}{!}{\begin{tikzpicture}
\node (0) at (0,-1.5){$(-\infty,-\infty)$};
\node (a) at (-1,0){$(1,2)$};
\node (b) at (1,0){$(2,-\infty)$};

\node (e2a) at (-2,1.5){$(1,+\infty)$};
\node (ab) at (0,1.5){$(2,2)$};
\node (e1b) at (2,1.5){$(+\infty,-\infty)$};

\node (e1ab) at (1,3){$(+\infty,2)$};
\node (e2ab) at (-1,3){$(2,+\infty)$};

\node (e1e2ab) at (0,4.5){$(+\infty,+\infty)$};

\draw (0) -- (a);
\draw (0) -- (b);
\draw (a) -- (ab);
\draw (a) -- (e2a);
\draw (b) -- (ab);
\draw (b) -- (e1b);
\draw (ab)--(e1ab);
\draw (ab)--(e2ab);
\draw (e1b)--(e1ab);
\draw (e2a)--(e2ab);
\draw (e1ab)--(e1e2ab);
\draw (e2ab)--(e1e2ab);

\end{tikzpicture}}
\caption{The monomial tropical polyhedron $\monomial{V}$ and its $\max$-lattice defined in Example \ref{ex:max+lattice}. Vertices of $\monomial{V}$ are marked \added{black}, facet-apices are marked \added{red} and other elements are marked white.}
\label{fig:max+lattice}
\end{figure}

The following two propositions describe the relationship between the $\max$-lattice, the vertex-facet lattice and the pseudovertex poset.
\begin{proposition}
The affine part of the vertex-facet lattice of $\monomial{V}$ is a sublattice of its $\max$-lattice.
\end{proposition}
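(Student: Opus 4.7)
The plan is to exhibit an injective, order-preserving map $\phi$ from the affine part of $\cV$ into the $\max$-lattice by sending each closed set $S$ to its componentwise supremum $m_S$, using the identification of each ray $e^{(i)} \in \maxunit$ with the modified ray $\bar{e}^{(i)} \in \modmaxunit$ before taking the maximum. Well-definedness is immediate, since the condition $S \cap V \neq \emptyset$ or $S = \emptyset$ that cuts out the affine part of $\cV$ is exactly the membership condition for the $\max$-lattice. Order-preservation is also immediate: taking componentwise maxima is monotone under set inclusion.

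The heart of the argument is order-reflection, namely that $m_{S_1} \leq m_{S_2}$ implies $S_1 \subseteq S_2$. Given $x \in S_1$, the argument splits into two cases. If $x = e^{(i)}$ is a ray, then $\bar{e}^{(i)} \leq m_{S_2}$ gives $(m_{S_2})_i = +\infty$, and the only contribution to $m_{S_2}$ capable of producing $+\infty$ is $\bar{e}^{(i)}$ itself, so $e^{(i)} \in S_2$. If $x \in V$ is a vertex, I would take any apex $a \in \overline{F}$ incident to $S_2$; incidence gives $a \geq m_{S_2} \geq m_{S_1} \geq x$ componentwise. The crucial observation is that condition~(1) of Proposition~\ref{prop:facetapex+characterisation} forbids any generator from lying strictly below an apex, so $x < a$ in every coordinate is impossible; this forces a coordinate match $x_k = a_k$ and hence the incidence of $x$ with $a$. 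Since $a$ was arbitrary and $S_2$ is closed, we conclude $x \in S_2$.

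Injectivity follows by applying order-reflection in both directions, so $\phi$ realises the affine part of $\cV$ as a subposet of the $\max$-lattice. To promote this to the sublattice statement, I would further check that meet is preserved by $\phi$: given any element $m_T$ of the $\max$-lattice lying below both $m_{S_1}$ and $m_{S_2}$, each member of its maximal label $T$ satisfies the hypotheses of the order-reflection argument with respect to both $S_1$ and $S_2$, so it lies in $S_1 \cap S_2$, yielding $m_T \leq m_{S_1 \cap S_2}$. Combined with the trivial inequality $m_{S_1 \cap S_2} \leq m_{S_1}, m_{S_2}$, this identifies $m_{S_1 \cap S_2}$ as the meet of $\phi(S_1)$ and $\phi(S_2)$ in the $\max$-lattice.

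The step I expect to be the main obstacle is pinpointing the precise role of Proposition~\ref{prop:facetapex+characterisation}(1): one must recognise that the rigidity imposed by the apex characterisation is exactly what prevents two distinct closed sets of the vertex-facet lattice from sharing the same componentwise supremum, even though the closure operation on $\cV$ and the canonical-label operation on the $\max$-lattice look quite different a priori. Once this is isolated, everything else falls out directly from the definitions and the order on $\TTclosed^d$.
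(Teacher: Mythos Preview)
Your proof is correct and follows essentially the same approach as the paper: both define the map $S \mapsto m_{\overline{S}}$ and show it is an order embedding, with closedness of $S_2$ and the facet-apex characterisation (Proposition~\ref{prop:facetapex+characterisation}(1)) doing the work in the order-reflection step. Your argument is organised as a direct implication where the paper uses the contrapositive, and you make the appeal to Proposition~\ref{prop:facetapex+characterisation}(1) explicit where the paper leaves it implicit; you also add a meet-preservation check that the paper's proof omits, so if anything your version is slightly more complete.
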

\begin{proof}
Given a closed subset $S$ of $V \cup \maxunit$, the corresponding subset $\overline{S}$ of $V \cup \modmaxunit$ is defined by simply replacing $e^{(k)}$ with $\bar{e}^{(k)}$.
We claim that the map from the vertex-facet lattice to the $\max$-lattice defined by $S \mapsto m_{\overline{S}}$ is an order embedding.

Let $S_1,S_2$ be closed subsets of $V \cup \maxunit$ such that $S_i \cap V \neq \emptyset$ for $i = 1,2$.
It suffices to show $S_1 \subseteq S_2$ if and only if $m_{\overline{S}_1} \leq m_{\overline{S}_2}$.
If $S_1 \subseteq S_2$, it is immediate that $\overline{S}_1 \subseteq \overline{S}_2$ and therefore $m_{\overline{S}_1} \leq m_{\overline{S}_2}$.

Conversely, suppose $S_1 \nsubseteq S_2$.
By closedness, there exists an apex $a \in F$ such that all elements of $S_2$ are incident with $a$, but there exists some $w \in S_1 \setminus S_2$ that is not incident with $a$.
There exists some coordinate $i \in [d]$ such that
\begin{align*}
v_i \leq a_i < w_i  \ &\forall v \in S_2 \cap V \quad \text{ if } w \in V \\
v_i \leq a_i < \infty  \ &\forall v \in S_2 \cap V \quad \text{ if } w = e^{(i)}
\end{align*}
In either case, we have $m_{\{\overline{w}\}} \nleq m_{\overline{S}_2}$ therefore $m_{\overline{S}_1} \nleq m_{\overline{S}_2}$.
\end{proof}

\goodbreak

\begin{proposition}
The $max$-lattice is a subposet of the pseudovertex poset.
\end{proposition}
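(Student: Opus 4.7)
The plan is simple: both the max-lattice and the pseudovertex poset $\cP$ are equipped with the order inherited from the componentwise partial order on $\TTclosed^d$, and both contain $\bm{-\infty}$ by convention. So the only thing to verify is that for every $S \subseteq V \cup \modmaxunit$ with $S \cap V \neq \emptyset$, the point $m_S$ is a pseudovertex, i.e., the covector graph $G_{m_S}$ is connected.

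I would use the coordinate node $0 \in [d]_0$ as a universal hub. Unwinding~\eqref{eq:aff+point+sector}, for any $v \in V$ the edge $(v,0)$ lies in $G_{m_S}$ exactly when $v \leq m_S$; in particular every $v \in S \cap V$ is directly connected to $0$. For $v \in V$ with $v \not\leq m_S$, picking $i^* \in [d]$ attaining the maximum of $v_k - (m_S)_k$ produces an edge $(v, i^*)$: the positivity of this maximum forces both inequalities defining $S_{i^*}(v)$, even when $\pm\infty$ entries appear in $m_S$.

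It then remains to bridge each coordinate node $i \in [d]$ back to $0$, which I would do by three cases on the value of $(m_S)_i$. If $(m_S)_i = +\infty$, the compactification convention for covector graphs on $\TTclosed^d$ adds the edge $(e^{(i)},0)$, which together with the universal edge $(e^{(i)},i)$ gives the desired path. If $(m_S)_i$ is finite, the maximum is necessarily attained by some $u \in S \cap V$, since elements of $\modmaxunit$ contribute only values in $\{-\infty,+\infty\}$; for that $u$ both $(u,0)$ and $(u,i)$ are edges by a direct sector check. If $(m_S)_i = -\infty$, any $u \in S \cap V$ has $u_i = -\infty$, and the defining inequalities of $S_i(u)$ degenerate to $-\infty \leq -\infty$, yielding the bridge $(u,0), (u,i)$. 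Finally, each ray $e^{(k)}$ carries the universal edge $(e^{(k)},k)$, so it attaches to node $k$, which has just been shown to be connected to $0$.

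The main obstacle I expect is purely notational: carefully managing the $\pm\infty$ arithmetic in the defining inequalities of~\eqref{eq:aff+point+sector}--\eqref{eq:aff+ray+sector}, and correctly tracking the extra edges $(e^{(i)},0)$ introduced by the extension of covector graphs to $\TTclosed^d$. Once these conventions are pinned down, the hub-at-$0$ argument is essentially immediate and produces connectedness of $G_{m_S}$, showing $m_S \in \cP$.
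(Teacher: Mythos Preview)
Your proposal is correct and follows essentially the same approach as the paper: show that the coordinate node $0$ is connected to every $i\in[d]$ via an element of $S$ (or via $e^{(i)}$ when $(m_S)_i=+\infty$), and then observe that every remaining node of $V\cup\maxunit$ lies in some sector and hence attaches to $[d]_0$. Your case analysis is in fact slightly more explicit than the paper's---you spell out the $(m_S)_i=-\infty$ case and the sector containing a non-dominated vertex, both of which the paper leaves implicit.
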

\begin{proof}
  Let $m = m_S$ be an element of the $\max$-lattice, we claim that $m$ is a pseudovertex as a point in $\TTclosed^d$.
Let $G_m$ be the covector graph associated to $m$, we show that $G_m$ is connected.

By Lemma \ref{lem:tropical+barycenter}, $m$ is the tropical barycenter of the tropical polyhedron $P_S$.
If $m_i$ is finite there exists some vertex $w$ such that $w_i = m_i$, therefore $0, i \in \neighbour_m(w)$.
If $m_i = \infty$ then $0, i \in \neighbour_m(e^{(i)})$, as every element is contained in $S_i(e^{(i)})$ and $m \in S_0(e^{(i)})$ if and only if $m_i = \infty$.
This implies there exists a connected component in $G_m$ containing every node of $[d]_0$.
Furthermore, no node of $V \cup \maxunit$ can be isolated in $G_m$, therefore $G_m$ is connected.
\end{proof}

\begin{corollary} \label{coro:characterization+covector+max+lattice}
A pseudovertex $p$ with covector graph $G_p$ is an element of the $\max$-lattice if and only if $\neighbour_p(\neighbour_p(0)) = [d]_0$. 
\end{corollary}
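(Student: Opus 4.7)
The plan is to unpack both sides of the biconditional: the left-hand side asserts that $p = m_S$ for some $S \subseteq V \cup \modmaxunit$ with $S \cap V \neq \emptyset$ (or by convention $p = m_\emptyset = \bm{-\infty}$), while the right-hand side is a purely combinatorial condition on the covector graph $G_p$. The proof is a coordinate-by-coordinate dictionary between the componentwise maximum defining $m_S$ and the sectors indexing the covectors.

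For the forward direction, I would essentially refine the argument in the proof of the preceding proposition. Suppose $p = m_S$. Every $v \in S \cap V$ satisfies $v \leq p$ componentwise, so lies in $\neighbour_p(0)$; every $\bar{e}^{(k)} \in S$ forces $p_k = \infty$, hence $e^{(k)} \in \neighbour_p(0)$. It then suffices to exhibit, for each $i \in [d]_0$, an element of $\neighbour_p(0)$ whose covector contains $i$: for $i = 0$ any element of $\neighbour_p(0)$ works; for $i \in [d]$ with $p_i$ finite the identity $m_{S,i} = p_i$ produces some $v \in S \cap V$ with $v_i = p_i$, placing $p$ in $S_i(v)$; for $p_i = \infty$ the ray $e^{(i)}$ satisfies $i \in \neighbour_p(e^{(i)})$ since $S_i(e^{(i)}) = \TTmax^d$; and for $p_i = -\infty$ any $v \in S \cap V$ automatically has $v_i = -\infty$ and so belongs to $S_i(v)$.

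For the backward direction, I set
\[
S \ = \ (\neighbour_p(0) \cap V) \cup \SetOf{\bar{e}^{(k)}}{e^{(k)} \in \neighbour_p(0)}
\]
and verify $m_S = p$. The inequality $m_S \leq p$ is immediate from the definition of $\neighbour_p(0)$. For the reverse bound, the hypothesis $\neighbour_p(\neighbour_p(0)) = [d]_0$ produces, for each $i \in [d]$, some $v \in \neighbour_p(0)$ with $i \in \neighbour_p(v)$. If $v \in V$, then combining $v \leq p$ with $p_i \leq v_i$ (from the defining inequality of $S_i(v)$) forces $v_i = p_i$, yielding $m_{S,i} \geq p_i$. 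If $v = e^{(k)}$, then $p_k = \infty$ and $i \in \neighbour_p(e^{(k)})$ gives either $i = k$, in which case $\bar{e}^{(k)} \in S$ contributes $\infty = p_i$, or $p_i = -\infty$, for which the lower bound is vacuous.

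The main subtlety is ensuring $S \cap V \neq \emptyset$ in the backward direction, so that $p$ is genuinely expressible as a max-lattice element rather than solely from modified rays. This relies on the hypothesis that $p$ is a pseudovertex of $\monomial{V}$, whose geometric realization inside $\closedmonomial{V}$ guarantees the existence of some vertex of $V$ dominated by $p$ whenever $p \neq \bm{-\infty}$; the edge case $p = \bm{-\infty}$ is covered by the convention $m_\emptyset = \bm{-\infty}$ on both sides. Once this is resolved the two directions match up perfectly, and the stated equivalence falls out of the sector calculus already used in Lemma~\ref{lem:tropical+barycenter} and the preceding proposition.
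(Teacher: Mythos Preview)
Your approach mirrors the paper's: its entire proof reads ``The previous proof shows elements of the $\max$-lattice satisfy this condition. Conversely, let $S = \neighbour_p(0)$. For all $i \in [d]$ there exists $w \in S$ such that $w_i = p_i$ or $p_i = \infty$ and $w = e^{(i)}$. This implies $p = m_{\overline{S}}$.'' Your choice of $S$ coincides with this, and you have simply spelled out the coordinate-by-coordinate checks the paper leaves implicit.

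Where you go further is in flagging the requirement $S \cap V \neq \emptyset$, which the paper's proof does not address. This is a genuine subtlety, but your resolution --- that being a pseudovertex forces $p \in \closedmonomial{V}$ and hence $p$ dominates some element of $V$ --- does not hold as stated: pseudovertices are merely points of $\TTclosed^d$ with connected covector graph, with no restriction to $\closedmonomial{V}$. For instance, with $d=2$ and $V = \{(0,0)\}$, the point $p = (\infty,-\infty)$ has connected $G_p$, one computes $\neighbour_p(0) = \{e^{(1)}\}$ and $\neighbour_p(e^{(1)}) = \{0,1,2\} = [d]_0$, yet $p$ dominates no vertex of $V$ and is not in the max-lattice. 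So the gap you identified is real and is shared with the paper's own argument; your proof is as complete as theirs, but the justification you offer for closing the gap does not stand.
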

\begin{proof}
The previous proof shows elements of the $\max$-lattice satisfy this condition.
Conversely, let $S = \neighbour_p(0)$.
For all $i \in [d]$ there exists $w \in S$ such that $w_i = p_i$ or $p_i = \infty$ and $w = e^{(i)}$.
This implies $p = m_{\overline{S}}$.
\end{proof}

As the covector graph of an element of the $\max$-lattice only depends on the neighbourhood of $0$, we get the following.

\begin{corollary} \label{coro:max+lattice+dependence+order}
  The $\max$-lattice is purely determined by the componentwise order of the points and does not depend on the actual coordinates. 
\end{corollary}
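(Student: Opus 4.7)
The plan is to derive the corollary as a more-or-less immediate consequence of Corollary~\ref{coro:characterization+covector+max+lattice}, by observing that every ingredient used to construct the $\max$-lattice is already order-theoretic in nature. Concretely, I want to show that if $V$ and $V'$ are two generating sets in $\TTmax^d$ related by a bijection which, in each coordinate $i \in [d]$, preserves the induced total preorder on the $i$th coordinates (and respects the values $\pm \infty$), then the two $\max$-lattices are isomorphic as labelled posets.

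First, I would recall from Corollary~\ref{coro:characterization+covector+max+lattice} that the elements of the $\max$-lattice of $\monomial{V}$ are exactly those pseudovertices $p \in \TTclosed^d$ whose covector graph satisfies $\neighbour_p(\neighbour_p(0)) = [d]_0$, with the corresponding subset being $S = \neighbour_p(0)$. Then I would check that each of the relations appearing in this condition is purely order-theoretic: $v \in \neighbour_p(0)$ is equivalent to $v \leq p$ componentwise; for such $v$, $i \in \neighbour_p(v)$ is equivalent to $v_i = p_i$; and $i \in \neighbour_p(\bar e^{(i)})$ is equivalent to $p_i = \infty$. None of these uses numerical magnitudes beyond pairwise comparisons within a fixed coordinate.

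Next, I would verify the same for the labels and for the ambient partial order. An element $m_S$ is defined by $(m_S)_i = \max\{v_i \mid v \in S\}$, which only requires knowing the preorder on the $i$th coordinates of generators in order to identify the maximiser; the comparison $m_S \leq m_T$ reduces to comparisons $\max_{v \in S} v_i \le \max_{w \in T} w_i$ coordinate by coordinate, again purely order-theoretic. Combining this with the previous paragraph, an isomorphism of componentwise order data between $V$ and $V'$ induces a bijection between the subsets $S \subseteq V \cup \modmaxunit$ satisfying the characterisation of Corollary~\ref{coro:characterization+covector+max+lattice}, and preserves the componentwise order on the corresponding labels $m_S$.

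The main potential obstacle is a bookkeeping one: making sure that the notion of ``componentwise order of the points'' is stated precisely enough to carry through, especially in the presence of ties (multiple generators sharing a coordinate value) and of infinite coordinates $\pm\infty$. I would therefore take care to phrase the isomorphism assumption as an equality of the induced total preorders on $\{v_i \mid v \in V\}$ for each $i \in [d]$, together with preservation of whether $v_i = -\infty$, so that the computations of $m_S$ and of the relation $m_S \leq m_T$ go through verbatim. Apart from that, no substantial new argument is needed beyond what is packaged in Corollary~\ref{coro:characterization+covector+max+lattice}.
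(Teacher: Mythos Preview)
Your approach is correct and essentially the same as the paper's: the corollary is stated there as an immediate consequence of Corollary~\ref{coro:characterization+covector+max+lattice}, with the one-line justification that the covector graph of an element of the $\max$-lattice only depends on $\neighbour_p(0)$. You have simply unpacked this in more detail, making explicit that the relevant covector relations and the comparison $m_S \leq m_T$ are determined by the coordinate-wise preorders.
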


\begin{remark} \label{rem:max+lattice+covector+comparison}
Example~\ref{ex:pseudovertex+non+lattice} and Corollary~\ref{coro:characterization+covector+max+lattice} demonstrate that the pseudovertex poset is in general not equal to the max-lattice.
 Furthermore, unlike the $\max$-lattice the pseudovertex poset does not only depend on the ordering of the coordinates.
This can be seen via the example in Figure \ref{fig:pseudovertex+non+lattice} by perturbing the vertex $132$ to $142$.
This does not change the ordering of the coordinates, however the vertex $210$ splits into $210$ and the new pseudovertex $220$.
\end{remark}

\subsection{Max-min poset} \label{subsec:max+min+poset}

We introduce a new poset which is motivated by the duality of monomial tropical polyhedra.
However, we show that its unexpected behaviour reflects the discrepancy of the tropical convex hull of the vertices and the intersection of the halfspaces corresponding to a face in the vertex-facet lattice.
It is this behaviour that causes a major problem for the face lattice defined by Joswig in~\cite{Joswig:2005}. 

Dual to the max-lattice, we introduce the min-lattice of a monomial tropical polyhedron. 
For this, we set $\modminunit = -\modmaxunit$ and let $A$ be the set of principal apices of $\monomial{V}$.
We mirror the construction of the $\max$-lattice from $\TTmax$ to $\TTmin$.
We replace $\max$ with $\min$, the ground set $V\cup\modmaxunit$ with $A\cup\modminunit$ and for each $T \subseteq A \cup \modminunit$ such that $T \cap A \neq \emptyset$, we define the elements
\[
n_T = \min\SetOf{a}{a \in T} \enspace .
\]
By reversing the partial order on $\TTclosed^d$ and letting $\bm{\infty}$ be the unique minimal element in this partial order, we obtain the \emph{$\min$-lattice} of $\monomial{V}$.
As a direct consequence of Theorem \ref{thm:complementary-cones}, we have the following relation between the $\min$ and the $\max$-lattice. 
\begin{lemma}
  \added{There is an order-reversing isomorphism between the $\min$-lattice of $\monomial{V}$ and the $\max$-lattice of $\monomial{-A} = -\complementarymonomial{V}$}. 
\end{lemma}
\begin{proof}
\added{There is a bijection between the two posets given by
\[
n_T = \min\SetOf{a}{a \in T} = -\max\SetOf{-a}{a \in T} = -m_T \, .
\]
Furthermore, this bijection is order reversing as $n_S \leq n_T \Leftrightarrow -m_S \geq -m_T$.}
\end{proof}
\added{The consequence of this lemma is that the $\min$-lattice is a reformulation of the $\max$-lattice of the complementary monomial cone, and vice versa.
As a result,} both the $\max$-lattice and the $\min$-lattice describe the face structure of the intersection $\monomial{V} \cap \complementarymonomial{V}$.
Furthermore, both have associated geometric data in the form of the max-barycenter $m_S$ and the min-barycenter $n_T$, respectively.
A set of facet-apices $T \subseteq F$ is also naturally a subset of $A \cup \modminunit$ by sending any boundary apices to the corresponding element in $\modminunit$.
This gives a natural embedding of the affine part of the vertex-facet lattice $\cV$ into the max-lattice $\cL_{\max}$ and the min-lattice $\cL_{\min}$ via the maps
\begin{align*}
\cV &\hookrightarrow \cL_{\max} &\quad \cV &\hookrightarrow \cL_{\min} \\
S &\mapsto m_S &\quad T &\mapsto n_T
\end{align*}
where $S$ is a closed set of vertices and $T$ is a closed set of facet-apices.

A natural question is whether the geometric data agrees on the affine part of the vertex-facet lattice.
Right from Definition~\ref{def:vertex+apex+incidence}, we obtain a weaker statement.

\begin{lemma}
Let $S \subseteq \overline{V}$ be a closed set of vertices with $S \cap V \neq \emptyset$ whose element in the $\max$-lattice is $m_S$, and let $T \subseteq \overline{F}$ its corresponding closed set of facets whose element in the $\min$-lattice is $n_T$.
Then $m_S \leq n_T$.
\end{lemma}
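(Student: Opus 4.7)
The plan is to reduce the statement to a componentwise inequality that follows directly from the definition of incidence, then commute the operations $\max$ and $\min$ in $\TTclosed^d$.

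First, I would unpack the hypothesis: since $S$ and $T$ are corresponding closed sets, every $v \in S$ is incident with every $a \in T$ by construction of the vertex-facet lattice. Using Definition~\ref{def:vertex+apex+incidence}, I would then show that in every one of the possible cases, incidence translates to the componentwise inequality $\bar v \leq \bar a$ in $\TTclosed^d$, where $\bar v$ and $\bar a$ denote the images in $V \cup \modmaxunit$ and $A \cup \modminunit$ respectively. Explicitly: (i) if $v \in V$ and $a$ is a principal apex, this is the first clause of Definition~\ref{def:vertex+apex+incidence} (together with Proposition~\ref{prop:facetapex+characterisation}); (ii) if $v = e^{(i)}$ and $a$ is a principal apex incident to it, then $a_i = +\infty$, so $\bar{e}^{(i)} = (-\infty,\ldots,-\infty,+\infty,-\infty,\ldots,-\infty) \leq a$; (iii) if $a$ is a boundary apex, corresponding to a coordinate $k$ with $a_k = -\infty$ and $a_j = +\infty$ for $j\ne k$, then it is mapped to an element of $\modminunit$ and incidence with $v \in V$ forces $v_k = -\infty$, hence $\bar v \leq \bar a$; (iv) for the far-apex $b^{\infty}$, only rays are incident, and under the embedding into $A \cup \modminunit$ this can be ignored since it contributes $\bm{\infty}$, which does not affect a componentwise minimum over a nonempty $T$ containing another element (the only case one needs $T \cap A \ne \emptyset$ is handled by the definition of the min-lattice).

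With that uniform inequality in hand, the conclusion is immediate. For any fixed $a \in T$, taking the componentwise maximum over $\bar v \leq \bar a$ as $v$ ranges over $S$ yields
\[
m_S \;=\; \max_{v \in S} \bar v \;\leq\; \bar a \enspace .
\]
Now taking the componentwise minimum over $a \in T$ on the right gives
\[
m_S \;\leq\; \min_{a \in T} \bar a \;=\; n_T \enspace ,
\]
which is the desired inequality.

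The main thing to watch is not a deep obstruction but a bookkeeping one: I need to make sure that the embeddings $\overline V \hookrightarrow V \cup \modmaxunit$ and $\overline F \hookrightarrow A \cup \modminunit$ translate the abstract incidence of Definition~\ref{def:vertex+apex+incidence} into genuine componentwise $\leq$ for all boundary cases involving $\pm\infty$ coordinates, rays, and the far-apex. Once that dictionary is set up, the argument is simply that $\max$ and $\min$ preserve componentwise inequalities.
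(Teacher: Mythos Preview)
Your proposal is correct and follows the same approach the paper takes: the paper simply states that the lemma is obtained ``right from Definition~\ref{def:vertex+apex+incidence}'' without giving further details, and your argument is exactly the unpacking of that claim---every $v\in S$ is incident with every $a\in T$, incidence gives $\bar v \leq \bar a$ componentwise, and then $\max$ and $\min$ preserve the inequality. Your case analysis of boundary apices, rays, and the far-apex is more explicit than anything the paper writes, but it is precisely the bookkeeping one would need to make the paper's one-line justification rigorous; note in particular that since $S\cap V\neq\emptyset$ and no vertex is incident with $b^{\infty}$, the far-apex never appears in $T$, so that case is in fact vacuous.
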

However, equality is not always attained as the following example demonstrates.
\begin{example} \label{ex:max+min+subposet+vf}
\added{Consider the monomial tropical polyhedron $\monomial{V}$ displayed in Figure~\ref{fig:min+max+counterexample} with generating set
\[
V \ = \
\begin{blockarray}{ccccc}
A & B & C & D & E \\
\begin{block}{(ccccc)}
1 & 1 & 1 & 0 & 0 \\
0 & 1 & 2 & 3 & 4 \\
2 & 1 & 0 & 4 & 3  \\
\end{block}
\end{blockarray}
\enspace .
\]
Its set of facet-apices is
\[
F \ = \
\begin{blockarray}{cccccccc}
s & t & u & v & w & x & y & z \\
\begin{block}{(cccccccc)}
1 & 1 & 1 & \infty & \infty & 0 & \infty & \infty \\
4 & 3 & \infty & 1 & 2 & \infty & 0 & \infty \\
4 & \infty & 3 & 2 & 1 & \infty & \infty & 0 \\
\end{block}
\end{blockarray}
\]
}
\begin{figure}
\includegraphics[width = 0.6\textwidth]{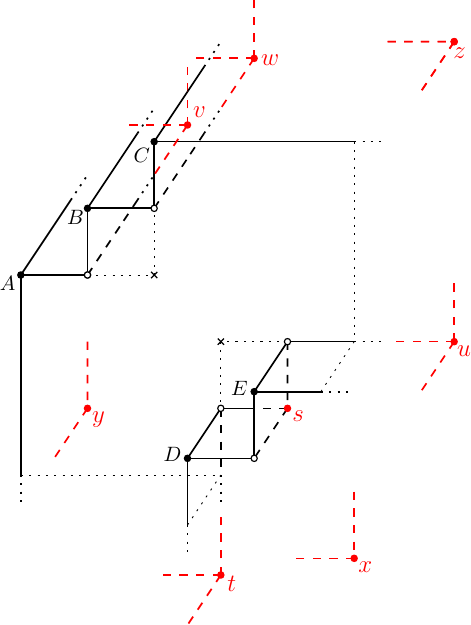}
\caption{The monomial tropical polyhedron $\monomial{V}$ given in Examples~\ref{ex:max+min+subposet+vf} and~\ref{ex:CP+nonlattice}.}
\label{fig:min+max+counterexample}
\end{figure}
\added{Consider the subset of vertices $\{A,B,C\}$; this is a closed set and its corresponding  closed set of apices is $\{s,t,u\}$.
The point in the $\max$-lattice corresponding to $\{A,B,C\}$ is $(1,2,2)$, while the point in the $\min$-lattice corresponding to $\{s,t,u\}$ is $(1,3,3)$.
These points are both marked on Figure~\ref{fig:min+max+counterexample} by crosses.}
\end{example}

It is desirable to have a face poset of a monomial tropical polyhedron that reflects the inherent duality of monomial tropical polyhedra which can be seen in Theorem~\ref{thm:complementary-cones} and in its translation to Alexander duality explained in Section~\ref{prop:alexander+duality}.
This motivates the following definition.
\begin{definition}
The \emph{max-min poset} $\cM$ is the induced subposet of $\cL_{\max}$ on the elements in the intersection $\cL_{\max} \cap \cL_{\min}$.
\end{definition}
We remark that it is equally reasonable to define $\cM$ as an induced subposet of $\cL_{\min}$, this will simply reverse the partial order.

\begin{proposition}
The max-min poset is a subposet of the affine part of the vertex-facet lattice.
\end{proposition}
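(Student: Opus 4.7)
Plan: For each $p \in \cM$, the strategy is to construct an explicit closed set in the affine part of the vertex-facet lattice whose image under the embedding $\cV \hookrightarrow \cL_{\max}$ is $p$; order-preservation will then be immediate. Define
\[
T^*(p) = \SetOf{a \in F}{a \geq p}
\qquad \text{and} \qquad
S^*(p) = \SetOf{w \in \overline{V}}{w \text{ is incident to every } a \in T^*(p)} \enspace .
\]
Since $S^*(p) = \neighbour(T^*(p))$, the set $S^*(p)$ is automatically closed in $\cV$ by the usual double-closure argument: $\neighbour(\neighbour(S^*(p))) \subseteq \neighbour(T^*(p)) = S^*(p)$, while the reverse inclusion always holds.

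The crucial leverage of $p \in \cM$ (rather than just $p \in \cL_{\min}$) shows up via the identity $n_{T^*(p)} = p$. A witness $T \subseteq A \cup \modminunit$ for $p = n_T$ could a priori involve vectors $\bar{e}^{(i)} \in \modminunit$ that are not realised as boundary apices in $F$. However, such $\bar{e}^{(i)} \geq p$ forces $p_i = -\infty$, and because $p \in \cL_{\max}$ can be written as $m_S$ with $S \cap V \neq \emptyset$, there must exist a vertex of $V$ with $v_i = -\infty$; this is precisely the condition making $\bar{e}^{(i)}$ a boundary apex of $\monomial{V}$. Hence $T \subseteq T^*(p)$ and therefore $n_{T^*(p)} = p$.

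Given this, I would next identify $S^*(p)$ concretely as $\SetOf{v \in V}{v \leq p} \cup \SetOf{e^{(i)}}{p_i = \infty}$. For a vertex $v \leq p$ and any $a \in T^*(p)$, the inequality $v \leq a$ holds; for principal apices Proposition~\ref{prop:facetapex+characterisation}(1) forbids $v < a$, yielding $v_k = a_k$ for some coordinate $k$, while for boundary apices $\bar{e}^{(i)} \in T^*(p)$ one has $v_i = -\infty = \bar{e}^{(i)}_i$. Conversely, any $w \in S^*(p) \cap V$ satisfies $w \leq a$ for all $a \in T^*(p)$, hence $w \leq n_{T^*(p)} = p$; the analysis for rays $e^{(i)}$ is analogous, ending in the condition $p_i = \infty$.

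Finally, the equality $m_{\overline{S^*(p)}} = p$ will follow coordinatewise: $p \in \cL_{\max}$ supplies, for each coordinate $k$ with $p_k$ finite, a vertex $v \leq p$ with $v_k = p_k$, while $p_k = \infty$ is realised by $\bar{e}^{(k)}$ via $e^{(k)} \in S^*(p)$ and $p_k = -\infty$ is realised by any vertex in $S^*(p) \cap V$. Order-preservation of $p \mapsto S^*(p)$ is then immediate from the explicit description. I expect the main obstacle is the delicate bookkeeping between $\modminunit$ and the genuine facet-apices $F$, which is resolved precisely by the max-min hypothesis on $p$.
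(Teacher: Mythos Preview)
Your proof is correct and follows essentially the same approach as the paper's: both arguments take the maximal sets $S$ and $T$ realising $p = m_S = n_T$ and show that $S$ is a closed set in $\cV$, the paper by directly checking the closure condition and you by observing $S = \neighbour(T^*(p))$ is closed automatically. Your treatment is more careful than the paper's about the bookkeeping between $\modminunit$ and genuine boundary apices in $F$, and about verifying $m_{\overline{S^*(p)}} = p$ coordinatewise, but the underlying idea is the same.
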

\begin{proof}
It suffices to show if $m_S = n_T$ for some maximal subsets of vertices and facet-apices $S$ and $T$, then $S$ and $T$ are closed.
There must exist some maximal set, as if $m_S = m_{S'}$, then we can take their union $S \cup S'$ without increasing the max.
Consider $w \leq a$ for all $a \in T$, then $w \leq n_T = m_S$ and so $w \in S$ by maximality.
This implies $S$ is closed, $T$ is closed by a similar argument.
\end{proof}
However, note that Example~\ref{ex:max+min+subposet+vf} implies the max-min poset may be a strict subposet of the vertex-facet lattice.
We shall see in Section \ref{subsec:CP+order} that the max-min poset is not a lattice in these cases.
Furthermore, the following example highlights that the max-min poset is not recoverable from \added{certain combinatorial data of the monomial tropical polyhedron, such as its vertex-facet incidence graph and the covector graphs of points.}

\begin{example} \label{ex:max+min+noncovector}
\added{
We consider the following variant of the monomial tropical polyhedron from Example~\ref{ex:max+min+subposet+vf}.
Explicitly, we consider the generating set $\tilde{V}$ obtained from $V$ after replacing $D$ and $E$ with $\tilde{D} = (0,2,3)$ and $\tilde{E} = (0,3,2)$.
This affects the three facet-apices $s,t,u$: we replace them with 
\[
\tilde{s} = (1,3,3) \quad , \quad \tilde{t} = (1,2,\infty) \quad , \quad \tilde{u} = (1,\infty, 2) \, .
\]
We now have $\max(A,B,C) = \min(\tilde{s},\tilde{t},\tilde{u}) = (1,2,2)$, and so $\{A,B,C\}$ is an element of the max-min poset of $\monomial{\tilde{V}}$.

We note that $\monomial{V}$ and $\monomial{\tilde{V}}$ have isomorphic vertex-facet lattices as these replacements do not affect the vertex-facet incidences.
We also note that the covectors for $ABC$ in $\monomial{V}$ and $\monomial{\tilde{V}}$ are equal up to relabelling $D,E$ by $\tilde{D},\tilde{E}$.
This implies that one cannot determine if a point is contained in the max-min poset from its covector, or from the vertex-facet incidence graph.
}
\end{example}

While we cannot give a characterisation of the covector graph of a point in the max-min poset, we conclude with a characterisation of the covector graphs of principal apices, the coatoms of the max-min poset, which is essentially a reformulation of Proposition~\ref{prop:facetapex+characterisation}. 

\begin{lemma} \label{lem:characterization+covectors+facet+apices}
  A point $p \in \TTmin^d$ is an apex of a principal halfspace of $\monomial{V}$ if and only if
  \begin{enumerate}[label=\alph*)]
  \item $\neighbour_p(0)$ contains no node of degree $1$,
  \item for each $i \in [d]$, there is at least one element \added{$v \in \overline{V}$ such that $\neighbour_p(v) = \{0, i\}$.}
  \end{enumerate}
\end{lemma}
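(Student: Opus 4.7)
The plan is to translate the two conditions of Proposition~\ref{prop:facetapex+characterisation} directly into covector-graph language, so the lemma becomes a restatement. The strategy rests on a careful analysis of which edges can appear in $G_p$ around the $0$-node, which is the only place where the structure of $\monomial{V}$ interacts with the covector.

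First I would work out the precise meaning of the edges at $0$ in $G_p$ for a point $p \in \TTmin^{d}$. From the definition of the sectors in~\eqref{eq:aff+point+sector} and~\eqref{eq:aff+ray+sector}, together with the extension of covectors to $\TTclosed^d$:
\begin{itemize}
\item for $v \in V$, the edge $(v,0)$ is present in $G_p$ iff $v \leq p$, and for $i \in [d]$ the edge $(v,i)$ is present iff $v \leq p$ with $v_i = p_i$ (since the sector condition $p_i - v_i = \min_k(p_k-v_k) \leq 0$ combined with $v \leq p$ forces equality in coordinate~$i$);
\item for $e^{(j)} \in \maxunit$, the edge $(e^{(j)},0)$ is present iff $p_j = \infty$, and because $p$ has no $-\infty$ coordinate, the only edge $(e^{(j)},i)$ with $i\in[d]$ is $i = j$ (always present).
\end{itemize}

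With this dictionary, condition (a) and (1) of Proposition~\ref{prop:facetapex+characterisation} become the same statement. Indeed, an element $u \in \neighbour_p(0)$ has degree $1$ in $G_p$ iff its unique neighbour is $0$. Rays $e^{(j)} \in \neighbour_p(0)$ already carry the edge $(e^{(j)},j)$, so they can never be of degree one. Hence a degree-$1$ node in $\neighbour_p(0)$ must be some $v \in V$ with $v \leq p$ and $v_i \neq p_i$ for every $i \in [d]$, which is exactly $v < p$.

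Next I would match condition (b) with condition (2). For each $i \in [d]$ I split into two cases according to whether $p_i = \infty$. If $p_i = \infty$, then $e^{(i)} \in \neighbour_p(0)$ is automatically incident with $i$, matching the vacuous case in Proposition~\ref{prop:facetapex+characterisation}. If $p_i \neq \infty$, then no ray in $\neighbour_p(0)$ can be incident with $i$ (by the second bullet above), so any element of $\neighbour_p(0)$ incident with $i$ has to be a generator $v \in V$ with $v \leq p$ and $v_i = p_i$; conversely, the generator furnished by (2) lies in $\neighbour_p(0)$ and is incident with $i$. Combining the two translations in both directions yields the equivalence.

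The main technical point I expect to be careful about is the case analysis for the rays $e^{(j)}$: one has to verify that, because $p \in \TTmin^d$ has no $-\infty$ coordinate, the subtle edges $(e^{(j)}, i)$ for $i \neq j$ coming from the sector description $S_i(e^{(j)}) = \{z : z_i = -\infty\}$ never materialise. Once this is pinned down, the proof is a bookkeeping exercise carried out by re-reading Proposition~\ref{prop:facetapex+characterisation} through the edge-dictionary established above.
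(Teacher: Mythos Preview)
Your approach---translating Proposition~\ref{prop:facetapex+characterisation} into covector-graph language---is exactly what the paper intends (it calls the lemma ``essentially a reformulation'' of that proposition and gives no further argument), and your edge-dictionary is correct. The direction ``apex $\Rightarrow$ (a) and (b)'' goes through as you describe.

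The converse has a genuine gap. From (b) you extract, for each $i$ with $p_i \neq \infty$, a generator $v \in V$ with $v \leq p$ and $v_i = p_i$; but condition~(2) of Proposition~\ref{prop:facetapex+characterisation} demands the strictly stronger $v_k < p_k$ for all $k \neq i$. You never argue this, and in fact it cannot be argued from (a) and (b) alone: for any vertex $v \in V \cap \RR^d$ (with $d \geq 2$), set $p = v$. Then $\neighbour_p(0) = \{v\}$ by minimality of $V$, and $v$ is incident with every node in $[d]_0$, so (a) and (b) both hold; yet (2) fails for each $i$ since no generator can lie strictly below $v$ in the remaining coordinates. Hence $p = v$ satisfies (a) and (b) without being the apex of a principal halfspace, so the stated equivalence is false and your ``combining the two translations in both directions'' step cannot close. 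The paper only invokes the lemma in the direction you do establish (inside the proof of Theorem~\ref{thm:generic+points+all+posets+coincide}), so the defect lies in the statement rather than in your strategy.
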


\subsection{CP-orders} \label{subsec:CP+order}

Felsner and Kappes \cite{FelsnerKappes:2008} introduced another poset associated to a monomial tropical polyhedron called the \emph{CP-order}.
Their work was motivated by the study of the order dimension of a poset, see in particular~\cite[Prop.~2]{FelsnerKappes:2008}.

We use a definition of CP-order tailored to our terminology by adapting combinatorial notions introduced in Kappes' thesis~\cite{Kappes:2006}.
We say $q$ is \emph{almost strictly greater than} $p$ if there exists some coordinate $i$ such that $p_i = q_i$ and $p_k < q_k$ for all $k \neq i$, denoted $p \triangleleft_i q$. 
An \emph{$i$-witness} for a point $p \in \partial\monomial{V}$ is a vertex $v \in V$ such that there exists $q \in \partial\monomial{V}$ satisfying $v \leq p \leq q$ and $v \triangleleft_i q$.
Note that this does not extend immediately to $\TTclosed^d$: any point $p$ with $p_i = \infty$ cannot have an $i$-witness as vertices are elements of $\TTmax^d$.
In this case, one can think of this as $e^{(i)}$ acting as an $i$-witness.
This motivates the following definition for characteristic points in $\TTclosed^d$.
\begin{definition}
A point $p$ is a \emph{characteristic point} if for each $i \in [d]$, either $p$ has an $i$-witness or $p_i = \infty$.
The \emph{CP-order} is the poset of characteristic points, along with the unique minimal element $\bm{-\infty}$, with the standard partial order.
\end{definition}
\added{As an example, we note that the vertices and facet-apices of $\monomial{V}$ are characteristic points.
A vertex $v \in V$ is its own $i$-witness with respect to $q$ where $q_i = v_i$ and $q_j = v_j + \varepsilon$ for all $j \neq i$.
By condition~\eqref{eq:apex+cond+2} of Proposition~\ref{prop:facetapex+characterisation}, a facet-apex $a$ must either have $a_i = \infty$ or some vertex $v$ be an $i$-witness.}

Characteristic points have a geometric interpretation that can informally be thought of as the ``corners" of $\monomial{V}$ where $d$ components of the boundary intersect, each parallel to a different coordinate hyperplane.
While intuitive, the geometric definition is subtle and much more cumbersome than the clean combinatorial formulation we use.

The CP-order is relatively hard to deal with. 
For example, it may not be a lattice as Example \ref{ex:CP+nonlattice} demonstrates. 
Furthermore, it appears to be computationally unwieldy: the best known approach is naively compute a poset containing it and manually check if each element satisfies the condition of being a characteristic point. 
However, the following results show the CP-order is intimately related to the computationally amenable face posets we have already discussed.
\begin{lemma} \label{lem:CP+covector}
A point $p \in \TTclosed^d$ with covector graph $G_p$ is a characteristic point if and only if for each $i \in [d]$ there exists $v \in \neighbour_p(0) \cap \neighbour_p(i)$ such that $\neighbour_p(u) \nsubseteq \neighbour_p(v)$ for all $u \in \neighbour_p(0) \setminus \neighbour_p(i)$.
\end{lemma}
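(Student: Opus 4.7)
The plan is to translate the geometric $i$-witness criterion defining characteristic points into the combinatorial condition on covector neighbourhoods, treating each coordinate $i \in [d]$ separately. I would first dispose of the boundary case $p_i = \infty$: by the convention on covector graphs in $\TTclosed^d$, the ray $e^{(i)}$ lies in $\neighbour_p(0) \cap \neighbour_p(i)$, and a short computation of $\neighbour_p(e^{(j)})$ from the sector definitions shows no $u \in \neighbour_p(0) \setminus \neighbour_p(i)$ can satisfy $\neighbour_p(u) \subseteq \neighbour_p(e^{(i)})$, which matches the definition of $p$ being characteristic at $i$. For $p_i \in \TTmax$, unpacking $S_0(\cdot)$ and $S_i(\cdot)$ for vertices yields a concrete dictionary: $v \in \neighbour_p(0) \cap \neighbour_p(i)$ iff $v \leq p$ and $v_i = p_i$; $u \in \neighbour_p(0) \setminus \neighbour_p(i)$ iff $u \leq p$ and $u_i < p_i$; and for vertices $u, v \in \neighbour_p(0)$ the inclusion $\neighbour_p(u) \subseteq \neighbour_p(v)$ is equivalent to $\{k \in [d] : u_k = p_k\} \subseteq \{k \in [d] : v_k = p_k\}$. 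A separate short check rules out rays as obstructors $u$, since a vertex $v \in V \subset \TTmax^d$ has no $\infty$-entries needed for the reverse inclusion.

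For the necessity direction, assume $v$ is an $i$-witness with witness $q \in \partial \monomial{V}$: so $v \leq p \leq q$, $v_i = q_i$, and $v_k < q_k$ for $k \neq i$, whence $v_i = p_i$. If some $u \in \neighbour_p(0) \setminus \neighbour_p(i)$ were to have $\neighbour_p(u) \subseteq \neighbour_p(v)$, the dictionary reads ``$v_k < p_k$ implies $u_k < p_k$''. A short case split over whether $v_k = p_k$ or $v_k < p_k$ for $k \neq i$, combined with $v_k < q_k$ and $u_i < p_i = q_i$, forces $u < q$ strictly in every coordinate. But then $q - \epsilon e^{(j)} \geq u$ lies in $\monomial{V}$ for every $j \in [d]$ and small $\epsilon > 0$, placing $q$ in the interior of $\monomial{V}$ and contradicting $q \in \partial \monomial{V}$.

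For the sufficiency direction, given $v \in V$ with the combinatorial property, I would apply Corollary~\ref{cor:vertex+containment} to $v$ and $i$ to extract a principal apex $a$ of $\monomial{V}$ with $a_i = v_i$ and $a_k > v_k$ for $k \neq i$. Using the subset condition, I would then argue that $a$ may be chosen so that $p \leq a$, and construct $q$ on the facet of apex $a$: set $q_i = p_i = a_i$; for $k \neq i$ with $a_k$ finite set $q_k = a_k$; for $k \neq i$ with $a_k = \infty$ pick any finite $q_k > \max(p_k, v_k)$. The resulting point lies in $\partial \monomial{V}$ (tight at any finite coordinate of $a$) and satisfies $v \triangleleft_i q$ with $v \leq p \leq q$, witnessing $v$ as an $i$-witness for $p$. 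The main obstacle is precisely this step, namely producing a \emph{finite} witness $q \in \partial \monomial{V}$ that dominates $p$ when the relevant apex $a$ may have $\infty$-valued coordinates; the subset condition is exactly the combinatorial leverage needed to guarantee that a suitable apex dominates $p$ and that the resulting $q$ lies strictly above $v$ in every coordinate except $i$.
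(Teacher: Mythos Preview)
Your approach is genuinely different from the paper's: the paper's proof is a two-line reduction to \cite[Proposition~4.15]{Kappes:2006}, whereas you attempt a self-contained argument. Your dictionary and the necessity direction are correct and well-executed; in particular, the interior contradiction from $u < q$ is clean, and your side check ruling out rays $e^{(j)}$ as obstructors is right (since $j \in \neighbour_p(e^{(j)})$ always, but $j \notin \neighbour_p(v)$ when $p_j = \infty$ and $v \in \TTmax^d$).

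The gap is exactly where you flag it, and your proposed route through Corollary~\ref{cor:vertex+containment} is more circuitous than necessary. That corollary gives you a facet-apex $a$ with $a_i = v_i$ and $a_k > v_k$, but not $a \geq p$; upgrading to an apex dominating $p$ with the \emph{same} $i$-th coordinate requires a separate argument you have not supplied. There is a more direct construction of $q$ that bypasses facet-apices entirely. Let $J = \{k : v_k = p_k\}$ (so $i \in J$), and set $q_i = p_i$, $q_k = p_k$ for $k \notin J$, and $q_k = p_k + \epsilon$ for $k \in J \setminus \{i\}$ with $\epsilon > 0$ small. Then $v \triangleleft_i q$ and $p \leq q$ are immediate. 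For $q \in \partial\monomial{V}$, suppose $w < q$ for some $w \in V$. If $w \leq p$ then $w_i < p_i$, so your subset hypothesis produces $k \notin J$ with $w_k = p_k = q_k$, contradicting $w_k < q_k$. If $w \nleq p$ then $w_\ell > p_\ell$ for some $\ell$, necessarily $\ell \in J \setminus \{i\}$; since $V$ is finite, choosing $\epsilon$ below $\min\{w'_\ell - p_\ell : w' \in V,\ w'_\ell > p_\ell\}$ forces $w_\ell \geq p_\ell + \delta > q_\ell$, again a contradiction. This finishes sufficiency without ever touching apices.
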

\begin{proof}
It suffices to show this condition on $v$ is equivalent to being an $i$-witness to $p$.
The condition $v \in \neighbour_p(0) \cap \neighbour_p(i)$ ensures that $v \leq p$ and $v_i = p_i$.
The condition on $u$ is a translation of \cite[Proposition 4.15]{Kappes:2006}.
Paired with the first condition, this is equivalent to $v$ being an $i$-witness.
\end{proof}
\begin{proposition}
  The CP-order is a subposet of the max-min poset, and hence of the $\max$-lattice and the pseudovertex poset.
\end{proposition}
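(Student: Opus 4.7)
The plan is to show that every characteristic point $p$ (and the distinguished element $\bm{-\infty}$) lies in both the max-lattice $\cL_{\max}$ and the min-lattice $\cL_{\min}$, and therefore in $\cM = \cL_{\max} \cap \cL_{\min}$. Because the CP-order, $\cM$, $\cL_{\max}$ and $\cP$ all inherit their partial order from the componentwise order on $\TTclosed^d$, the resulting set-theoretic inclusion is automatically order-preserving. The two trailing ``hence'' inclusions into $\cL_{\max}$ and $\cP$ then follow by transitivity from the subposet relations already established in this section.

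For the $\cL_{\max}$ containment I would combine Lemma \ref{lem:CP+covector} with Corollary \ref{coro:characterization+covector+max+lattice}. For each coordinate $i \in [d]$, if $p_i = \infty$ then $e^{(i)} \in \neighbour_p(0) \cap \neighbour_p(i)$ by the extension of the covector graph to $\TTclosed^d$, while if $p_i < \infty$ the characteristic condition directly supplies a vertex $v \in \neighbour_p(0) \cap \neighbour_p(i)$. In both cases we obtain some element of $\neighbour_p(0)$ whose neighbourhood contains $i$, and hence $[d] \subseteq \neighbour_p(\neighbour_p(0))$. Since $0 \in \neighbour_p(v)$ for any $v \in \neighbour_p(0)$ by the symmetry of the bipartite covector graph, we conclude $\neighbour_p(\neighbour_p(0)) = [d]_0$, which by Corollary \ref{coro:characterization+covector+max+lattice} places $p$ in $\cL_{\max}$.

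The $\cL_{\min}$ containment is the main obstacle, and the plan is to deduce it from the $\cL_{\max}$ argument via the duality of Theorem \ref{thm:complementary-cones}. That theorem identifies $\complementarymonomial{V}$ as a min-tropical monomial polyhedron with vertex set $-A$, and the lemma at the start of the max-min subsection states that the min-lattice of $\monomial{V}$ is isomorphic to the max-lattice of the dual object. It therefore suffices to show that the CP-property is self-dual on the common boundary $\partial\monomial{V} = \partial\complementarymonomial{V}$, i.e.~that a vertex $i$-witness $v$ for $p$ in $\monomial{V}$ gives rise to an apex $i$-witness for $p$ in $\complementarymonomial{V}$. The natural route is to start from the auxiliary point $q$ with $v \leq p \leq q$ and $v \triangleleft_i q$, and to extract a principal halfspace of $\monomial{V}$ tight at $q$ whose apex $a$ agrees with $q$ in coordinate $i$; the point $-a$ then serves as the desired dual $i$-witness at $p$.

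The technical heart is therefore the construction of such an apex. I would argue directly from Proposition \ref{prop:facetapex+characterisation}: the fact that $q$ lies on $\partial\monomial{V}$ with $v \triangleleft_i q$ forces at least one facet-apex to be tight at $q$ in the distinguished coordinate $i$, because the other coordinates of $q$ strictly exceed those of the vertex $v$. Once this dualisation of the witness is in place, applying the max-lattice argument to $\complementarymonomial{-A}$ yields $p \in \cL_{\min}$ and completes the proof; should the direct argument prove awkward, the self-duality statement is also available from \cite{Kappes:2006}.
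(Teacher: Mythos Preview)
Your proposal is correct, but it takes a more circuitous route than the paper. The paper's argument is a single direct construction: for each $i\in[d]$ with $p_i\neq\infty$, the $i$-witness $v^{(i)}$ together with its auxiliary boundary point $q$ yields a facet-apex $a^{(i)}$ with $v^{(i)}_i=p_i=a^{(i)}_i$ and $v^{(i)}\leq p\leq a^{(i)}$; collecting these into $S=\{v^{(i)}\}\cup\{e^{(i)}:p_i=\infty\}$ and $T=\{a^{(i)}\}$ gives $m_S=p=n_T$ in one stroke. You instead split the work: first establish $p\in\cL_{\max}$ via the covector criterion of Corollary~\ref{coro:characterization+covector+max+lattice}, then route $p\in\cL_{\min}$ through the duality of Theorem~\ref{thm:complementary-cones} by showing that the CP-property transfers to $\complementarymonomial{V}$. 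Both halves are valid, but the duality detour is unnecessary once you have extracted the apex $a^{(i)}$---at that point you can simply set $T=\{a^{(i)}\}$ and be done, exactly as the paper does.

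The step you correctly flag as the ``technical heart''---producing a facet-apex $a$ with $q\leq a$ and $a_i=q_i$ from the data $v\triangleleft_i q$ and $q\in\partial\monomial{V}$---is also the one step the paper leaves implicit. Your justification (``the other coordinates of $q$ strictly exceed those of $v$'') is not quite an argument on its own; the clean way to finish is to note that if every facet-apex $a\geq q$ had $a_i>q_i$, then $q+\epsilon e_i$ would still lie below some facet-apex (hence in $\complementarymonomial{V}$) while also satisfying $v<q+\epsilon e_i$ (hence in the interior of $\monomial{V}$), a contradiction. With that filled in, either approach goes through; the paper's is simply shorter because it avoids the separate covector and duality machinery.
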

\begin{proof}
Given some characteristic point $p$, we construct subsets $S \subseteq \overline{V}$ and $T \subseteq \overline{F}$ such that their corresponding elements in the $\max$-lattice and $\min$-lattice respectively equal $p$.
As $p$ is a characteristic point, for each $i \in [d]$ either it has an $i$-witness $v^{(i)}$ or $p_i = \infty$.
In the former case, there exists some apex $a^{(i)}$ such that $v^{(i)} \leq p \leq a^{(i)}$ with equality in the $i$-th coordinate, therefore we add $v^{(i)}$ to $S$ and $a^{(i)}$ to $T$.
In the latter case, we add $e^{(i)}$ to $S$.
The elements corresponding to $S$ in the $\max$-lattice and $T$ in the $\min$-lattice gives the desired result.
\end{proof}

\begin{proposition} \label{prop:CP+subposet+VIF}
The CP-order is a subposet of the affine part of the vertex-facet lattice.
\end{proposition}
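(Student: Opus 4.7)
The plan is to combine the two subposet inclusions established immediately before: the preceding proposition exhibits the CP-order as a subposet of the max-min poset $\cM$, and an earlier proposition in Subsection~\ref{subsec:max+min+poset} exhibits $\cM$ as a subposet of the affine part of $\cV$. Since the composition of order embeddings is an order embedding, chaining these two results yields the claim with essentially no extra combinatorial work.

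To make the composed embedding transparent, I would write it down explicitly. Given a characteristic point $p \neq \bm{-\infty}$, for each $i \in [d]$ with $p_i \neq \infty$ pick an $i$-witness $v^{(i)} \in V$, and for each $i$ with $p_i = \infty$ take the corresponding tropical unit ray $e^{(i)}$. Setting
\[
S \ = \ \bigl\{v^{(i)} : p_i \neq \infty\bigr\} \ \cup \ \bigl\{e^{(i)} : p_i = \infty\bigr\} \enspace ,
\]
the proof of the previous proposition shows $m_{\overline S} = n_T = p$ for the corresponding set of facet-apices $T$, so $p \in \cM$. The image of $p$ in the affine part of $\cV$ is then the closed subset of $\overline V$ generated by $S$, as prescribed by the embedding $\cM \hookrightarrow \cV$. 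The element $\bm{-\infty}$ is sent to $\emptyset$.

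The only point requiring attention is well-definedness, since the $i$-witnesses need not be unique: different selections of witnesses could a priori yield different sets $S$. This ambiguity disappears after taking the closure, because by the embedding $\cM \hookrightarrow \cV$ the image of $p$ is the \emph{unique} closed subset of $\overline V$ whose tropical barycenter equals $p$. Injectivity of the composed map is automatic, as distinct characteristic points have distinct max-barycenters (namely themselves), and order preservation is inherited from the two embeddings being composed. No obstacle beyond bookkeeping arises; the substantive content of the proposition has already been absorbed into the preceding two results.
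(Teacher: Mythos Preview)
Your argument is correct: the proposition is an immediate consequence of composing the inclusion of the CP-order into the max-min poset $\cM$ (the proposition just before) with the inclusion of $\cM$ into the affine part of~$\cV$ established earlier in Section~\ref{subsec:max+min+poset}. The well-definedness discussion is also fine, since the composed map is a function regardless of how one chooses witnesses.

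The paper takes a different, self-contained route. Rather than factoring through $\cM$, it directly associates to a characteristic point $p$ the sets
\[
S = \SetOf{v \in \overline{V}}{v \text{ incident to } p}, \qquad T = \SetOf{a \in F}{p \text{ incident to } a},
\]
and then verifies by a case analysis (using $i$-witnesses) that $S$ and $T$ form a pair of mutually closed sets. Your route is shorter, but the paper's proof yields the more explicit description ``image of $p$ = all vertices and rays incident to $p$'', rather than merely ``the unique closed set with tropical barycenter $p$''. This explicit identification is exactly what is exploited in the proof of Theorem~\ref{thm:CP+completion+VIF} (the Dedekind-MacNeille completion), where one needs that each individual facet-apex $a \in T$ is itself a characteristic point and that the labelling by closed sets of~$\overline{F}$ is the natural one. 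So your approach buys brevity here at the cost of having to revisit these points later; the paper front-loads the work.
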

\begin{proof}
Let $p \in \TTclosed^d \setminus \bm{\infty}$ be a characteristic point of $\closedmonomial{V}$ and define
\begin{align*}
S &= \SetOf{v \in \overline{V}}{v \text{ is incident to } p} \subseteq \overline{V} \enspace , \\
T &= \SetOf{a \in F}{p \text{ is incident to } a} \subseteq \overline{F} \enspace .
\end{align*}
There must exist some $i$-witness for $p$, therefore we have $S \cap V \neq \emptyset$ immediately.
We claim that $S$ and $T$ are closed sets.

First we show that every element of $S$ is incident to every element of $T$.
Fix some vertex $v \in S$ and consider a facet-apex $a \in T$, we have $v \leq p \leq a$.
Furthermore, by the definition of facet-apex there exists some coordinate such that $v_i = a_i$, therefore $v$ in incident to every facet-apex of $T$.
Now fix some ray $e^{(i)} \in S$.
For any facet-apex $a \in T$ we have $a_i \geq p_i = \infty$ therefore $a$ is incident to $e^{(i)}$.

Suppose there exists $w \in \overline{V} \setminus S$ incident to all facet-apices $a \in T$.
Suppose $w = e^{(i)}$, then every facet-apex has $a_i = \infty$.
There does not exist an $i$-witness for $p$, as this would imply there exists a vertex $v \in \TTmax^d$ such that $v_i = a_i = \infty$.
By definition of characteristic points, this implies $p_i = \infty$, and therefore $e^{(i)} \in S$.

Instead suppose $w \in V$.
As $w \notin S$, there exists $i \in [d]$ such that $w_i > p_i$.
There exists an $i$-witness $v \in S$ for $p$ and some $a \in T$ such that $v_i = p_i = a_i$ and $v_k < a_k$ for all $k \neq i$.
This implies $w_i > a_i$, contradicting that $w$ is incident to $a$.

We now prove the equivalent statement for $T$.
Suppose there exists some facet-apex $a \in F \setminus T$ incident to all elements of $S$.
As $a \notin T$, there exists some $i \in [d]$ such that $p_i > a_i$.
By definition of characteristic point, $p$ has an $i$-witness $v \in S$, implying $v_i = p_i > a_i$, and so $a$ is not incident to all elements of $S$.
This completes the proof that $S$ and $T$ are corresponding closed sets.

We note as the CP-order is a subposet of the max-lattice, no two characteristic points can have the same $S$.
Therefore every characteristic point is an element of the vertex-facet lattice.
The final points to consider are the unique maximal point $\bm{\infty}$ and the unique minimal element $\bm{-\infty}$.
To each of these we have the natural embedding
\begin{align*}
\bm{\infty} \mapsto \overline{V} \quad \bm{-\infty} \mapsto \emptyset \enspace ,
\end{align*}
and there corresponding closed subsets of $\overline{F}$ of facets.
As the partial order on both is domination, the CP-order is a subposet of the vertex-facet \added{lattice}.
\end{proof}

\begin{example} \label{ex:CP+nonlattice}
We show the CP-order is distinct from the vertex-facet lattice and max-min poset by revisiting Examples~\ref{ex:max+min+subposet+vf} and~\ref{ex:max+min+noncovector}.

\added{Consider the monomial tropical polyhedron introduced in Example~\ref{ex:max+min+subposet+vf}, and the interval $[B,ABCDE]$ of its vertex-facet lattice displayed in Figure~\ref{fig:CP+nonlattice}.
For each element, the max of their vertices is a characteristic point other than $ABC$: the point $\max(A,B,C)$ is marked with a cross in Figure~\ref{fig:min+max+counterexample}, and is clearly not a characteristic point.
Therefore the corresponding interval in the CP-order is not a lattice as $AB$ and $BC$ have both $ABCD$ and $ABCE$ as minimal upper bounds, as shown in Figure~\ref{fig:CP+nonlattice}.
The corresponding interval in the max-min poset coincides with the CP-order, hence it differs from the vertex-facet lattice.}
\begin{figure}
\begin{tikzpicture}
\node (b) at (-3,0) {$B$};
\node (ab) at (-4,1) {$AB$};
\node (bc) at (-2,1) {$BC$};
\node (abc) at (-3,2) {$ABC$};
\node (abcd) at (-4,3) {$ABCD$};
\node (abce) at (-2,3) {$ABCE$};
\node (abcde) at (-3,4) {$ABCDE$};

\draw (b) -- (ab);
\draw (b) -- (bc);
\draw (ab) -- (abc);
\draw (bc) -- (abc);
\draw (abc) -- (abcd);
\draw (abc) -- (abce);
\draw (abce) -- (abcde);
\draw (abcd) -- (abcde);

\node (b2) at (3,0) {$B$};
\node (ab2) at (2,1) {$AB$};
\node (bc2) at (4,1) {$BC$};
\node (abcd2) at (2,3) {$ABCD$};
\node (abce2) at (4,3) {$ABCE$};
\node (abcde2) at (3,4) {$ABCDE$};

\draw (b2) -- (ab2);
\draw (b2) -- (bc2);
\draw (ab2) -- (abcd2);
\draw (bc2) -- (abce2);
\draw (bc2) -- (abcd2);
\draw (ab2) -- (abce2);
\draw (abce2) -- (abcde2);
\draw (abcd2) -- (abcde2);
\end{tikzpicture}
\caption{The interval \added{$[B,ABCDE]$} in the vertex-facet lattice (left) and the CP-order (right) \added{of the monomial tropical polyhedron studied in Examples~\ref{ex:max+min+subposet+vf} and~\ref{ex:CP+nonlattice}.}}
\label{fig:CP+nonlattice}
\end{figure}

\added{Now consider the monomial tropical polyhedron from Example~\ref{ex:max+min+noncovector}.
The point $\max(A,B,C) = (1,2,2)$ is now contained in the max-min poset.
However, it is still not a characteristic point as it has no 2-witness or 3-witness. 
This shows the CP-order may be a strict poset of the max-min poset.}
\end{example}

The previous results in this section demonstrate that there are many lattices that the CP-order may embed into, but we would like to find the smallest such lattice.
This motivates the following construction.
Let $P$ be a partially ordered set, for some subset of elements $A \subseteq P$, we denote
\begin{align*}
A^{\uparrow} &= \SetOf{p \in P}{p \geq q \ , \ \forall q \in A} \enspace , \\
A^{\downarrow} &= \SetOf{p \in P}{p \leq q \ , \ \forall q \in A} \enspace .
\end{align*}
\begin{definition}[\cite{DaveyPriestley:2002}]
Given a partially ordered set $P$, its \emph{Dedekind-MacNeille completion} is the complete lattice of \added{subsets}
\[
DM(P) = \SetOf{A \subseteq P}{(A^{\uparrow})^{\downarrow} = A}
\]
ordered by inclusion.
\end{definition}
$P$ embeds into $DM(P)$ via the map $p \mapsto p^{\downarrow}$. %
Moreover, $DM(P)$ is the smallest complete lattice that $P$ embeds into: any embedding $P \hookrightarrow L$ into a complete lattice $L$ induces an embedding $DM(P) \hookrightarrow L$ \cite[Theorem 8.27]{Schroder:2016}.
Note that all of our lattices are finite, therefore completeness comes for free.

\begin{theorem} \label{thm:CP+completion+VIF}
The affine part of the vertex-facet lattice is the Dedekind-MacNeille completion of the CP-order.
\end{theorem}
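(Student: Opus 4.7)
The plan is to combine the subposet embedding $\mathrm{CP}\hookrightarrow \mathcal{V}$ already established in Proposition~\ref{prop:CP+subposet+VIF} with the universal property of the Dedekind--MacNeille completion. Since the affine part $\mathcal{V}$ is a finite, hence complete, lattice, the cited universal property \cite[Theorem 5.3.8]{Schroder:2003} immediately yields a lattice embedding $DM(\mathrm{CP})\hookrightarrow \mathcal{V}$. Thus the real work is to prove that this embedding is surjective, which amounts to showing that $\mathrm{CP}$ is both \emph{join-dense} and \emph{meet-dense} in $\mathcal{V}$: every closed set $S\in\mathcal{V}$ should be expressible as a join and as a meet of characteristic points (viewed inside $\mathcal{V}$).

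For this I would first identify the atoms and coatoms of $\mathcal{V}$ inside $\mathrm{CP}$. By the very definition of closure from the vertex-facet incidence graph, any closed $S\in\mathcal{V}$ with $S\cap V\neq\emptyset$ satisfies
\[
S = \bigvee_{v\in S\cap V}\overline{\{v\}} = \bigwedge_{a\in\overline F,\,S\subseteq\neighbour(a)}\neighbour(a),
\]
so join-density reduces to showing that every vertex (qua element $\overline{\{v\}}\in\mathcal{V}$) lies in the image of $\mathrm{CP}$, and meet-density reduces to showing the same for every facet-apex (qua element $\neighbour(a)$). For a vertex $v\in V$, Corollary~\ref{cor:vertex+containment} supplies, for every coordinate $i$ with $v_i\neq-\infty$, a facet-apex $a$ such that $v_i=a_i$ and $v_k<a_k$ for $k\neq i$; this means $v\triangleleft_i a$ so that $v$ is itself an $i$-witness for $v$. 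Remaining coordinates with $v_i=-\infty$ are handled by choosing the $i$-witness $v$ again together with a point $q$ on the face $x_i=-\infty$ in $\partial\closedmonomial{V}$, which exists precisely because the boundary inequality $x_i\geq-\infty$ is recorded. Hence vertices are characteristic. Dually, Proposition~\ref{prop:facetapex+characterisation} furnishes, for every facet-apex $a$ and every coordinate $i$ with $a_i\neq\infty$, a vertex $v\in V$ with $v\triangleleft_i a$, exhibiting the required $i$-witness; coordinates with $a_i=\infty$ are handled by the convention that $e^{(i)}$ plays the role of the witness. Thus every facet-apex (including boundary and far apices, whose conditions $a_i\in\{-\infty,\infty\}$ reduce to the $\infty$-case) is a characteristic point.

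Combining these two observations, every $S\in\mathcal{V}$ is a join of images of characteristic points and equally a meet of images of characteristic points, so $\mathrm{CP}$ is join- and meet-dense in $\mathcal{V}$. A standard consequence of density is that the embedding $DM(\mathrm{CP})\hookrightarrow\mathcal{V}$ is already an isomorphism: any element of $\mathcal{V}$ corresponds, under $p\mapsto p^{\downarrow}$, to a Galois-closed subset of $\mathrm{CP}$, and conversely every Galois-closed subset of $\mathrm{CP}$ realises such an element.

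The main obstacle will be the careful bookkeeping in step three, especially making sure that the $i$-witness definition, which is phrased in $\partial\monomial{V}\subset\TTmax^d$, extends correctly to the boundary and far apices living in $\TTclosed^d$ (where coordinates equal to $\pm\infty$ appear). The paper's extension of characteristic points to $\TTclosed^d$ via the convention \emph{``$e^{(i)}$ acts as an $i$-witness when $p_i=\infty$''} together with Lemma~\ref{lem:CP+covector}'s covector criterion should make these verifications routine; no genuinely new tropical construction is needed beyond Propositions~\ref{prop:facetapex+characterisation} and~\ref{cor:vertex+containment}.
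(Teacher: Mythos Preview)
Your overall strategy---use the universal property to get $DM(\mathrm{CP})\hookrightarrow\mathcal{V}$ and then prove surjectivity via join- and meet-density of $\mathrm{CP}$ in $\mathcal{V}$---is sound, and your meet-density argument (every closed $S$ is an intersection of facet-neighbourhoods, and facet-apices are characteristic) is correct and matches the key ingredient the paper also uses. The problem is your join-density argument. You claim that any closed $S$ with $S\cap V\neq\emptyset$ satisfies $S=\bigvee_{v\in S\cap V}\overline{\{v\}}$, i.e.\ that the affine part of $\mathcal{V}$ is atomic with the vertices as atoms. This is false as soon as $S$ contains a ray that is not forced by the vertices in $S$. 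Already in Example~\ref{ex:2d+monomial+polyhedron} the closed set $S=\{v^{(1)},e^{(2)}\}$ lies in the affine part, yet $\overline{S\cap V}=\overline{\{v^{(1)}\}}=\neighbour(a^{(1)})\cap\neighbour(a^{(2)})=\{v^{(1)}\}\subsetneq S$. So your reduction ``join-density $\Leftarrow$ vertices are characteristic'' breaks down; you would need a separate argument that every element of the affine part is a join of characteristic points, and this is not obvious from what you have written.

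The paper sidesteps exactly this difficulty by working in the other direction: instead of proving $DM(\cC)\hookrightarrow\mathcal{V}$ is onto, it constructs for every closed facet-set $T$ the subset $A_T=\{X\in\cC:X\subseteq T\}\subseteq\cC$ and verifies directly that $(A_T^{\downarrow})^{\uparrow}=A_T$, using that vertices (being minimal characteristic points) lie in $A_T^{\downarrow}$ and that facet-apices in $T$ lie in $A_T$. This yields an explicit embedding $\mathcal{V}\hookrightarrow DM(\cC)$, and minimality of $DM(\cC)$ finishes the proof. The point is that the paper's computation of $(A_T^{\downarrow})^{\uparrow}$ effectively establishes both density statements at once, without ever appealing to atomicity of $\mathcal{V}$. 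If you want to repair your approach, you could mimic this: show that for any $S'\subsetneq S$ in the affine part there is a characteristic point $p\le S$ with $p\not\le S'$---but when $S\setminus S'$ consists only of rays this requires producing a characteristic point with the appropriate $\infty$-coordinate, which is essentially the content of the paper's $A_T$ computation.
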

\begin{proof}
\added{Given a poset $P$, a subset $Q \subseteq P$ is \emph{join-dense} if for every $p \in P$, there exists a subset $A \subseteq Q$ such that $P$ is the join of the elements of $A$ in $P$, i.e. $p = \bigvee_{a \in A} a$; we define \emph{meet-dense} analogously with $\wedge$.
By~\cite[Theorem 7.41]{DaveyPriestley:2002}, if a poset $P$ is join and meet-dense in a complete lattice $L$, then $L \simeq DM(P)$.
Therefore it suffices to show that the CP-order is join and meet-dense in the affine part of the vertex-facet lattice.

Let $\cC$ be the CP-order of $\monomial{V}$ and $L$ the affine part of the vertex-facet lattice.
Proposition~\ref{prop:CP+subposet+VIF} shows that $\cC$ is a subposet of $L$, and so we can label the characteristic points by their corresponding closed subsets of either $\overline{V}$ or $F$.
To show that $\cC$ is meet-dense, recall that every facet-apex is a characteristic point, and therefore any closed set $T \subseteq F$ in $L$ can be realised as the meet of the characteristic points $\{a \mid a \in T\}$.

Showing that $\cC$ is join-dense is more involved as while the vertices $V$ are characteristic points, the rays $e^{(i)}$ are not.
Let $S \subseteq \overline{V}$ be a closed set in $L$ and $T \subseteq F$ its corresponding closed set of facet-apices.
For each $e^{(i)} \in S$, pick $v \in S\cap V$ such that $v_i \geq w_i$ for all $w \in S\cap V$.
Furthermore, define $v^{(i)}$ where $v_i^{(i)} = \infty$ and $v_k^{(i)} = v_k$ for all $k \neq i$.
We claim that $v^{(i)}$ is a characteristic point.

We show that for each $k \neq i$, the vertex $v$ is a $k$-witness for $v^{(i)}$, i.e. there exists some $q \in \partial\monomial{V}$ such that $v \triangleleft_k q$ where $v \leq v^{(i)} \leq q$.
We set $q$ to be
\[
q_j = \begin{cases} \infty & j = i \\ v_k & j = k \\ v_j + \varepsilon & j \neq i,k \\ \end{cases} \, ,
\]
for some small $\varepsilon > 0$.
This satisfies the necessary inequalities of a $k$-witness, it just remains to show that $q \in \partial\monomial{V}$ for sufficiently small $\varepsilon$.
Suppose $q$ is not in the boundary for any choice of $\varepsilon >0$, then there exists $w \in V$ such that $w < q$.
Letting $\varepsilon \rightarrow 0$, we get that $w_j \leq v_j$ for all $j \neq i$, implying that $w_i > v_i$; otherwise $v$ dominates $w$ and so cannot be a minimal generator.
For each facet-apex $a \in T$, we have $\infty = a_i > w_i$ and $a_j \geq v_j \geq w_j$.
We cannot have $w < a$ else this would contradict condition~\eqref{eq:apex+cond+1} of Proposition~\ref{prop:facetapex+characterisation}, therefore we must have equality in some coordinate.
This implies $w$ is incident to every facet-apex in $T$ and so is contained in $S$.
However, $v$ was picked such that it maximizes the $i$th coordinate over $S$, and so this is a contradiction.
Therefore $v^{(i)}$ has a $k$-witness for every $k \neq i$, as well as $v_i^{(i)} = \infty$ and so is a characteristic point.

We finally note that if we label $v_i^{(i)}$ by its closed set, this set contains $\{v, e^{(i)}\}$.
As a result, we can realise $S$ as the join of the characteristic points $\{v \mid v \in S \cap V\}$ and $\{v^{(i)} \mid e^{(i)} \in S\}$.
}

\end{proof}
As a corollary of this result, the CP-order is a lattice if and only if it is equal to the vertex-facet lattice.

\subsection{Scarf poset} \label{subsec:scarf+poset}
In his seminal work on computing fixed-points, see~\cite{Scarf:1967}, Scarf  essentially uses a tropicalised version of the famous algorithm by Lemke \& Howson for finding equilibria~\cite{LemkeHowson:1964}.
The algorithm pivots along the vertices of a generic monomial tropical polyhedron and is essentially iterated over the facets of the monomial tropical polyhedron.
This work, along with its connections to commutative algebra, lead to the introduction of the Scarf complex, see~\cite[\S 6.2]{MillerSturmfels:2005} for further details.

We define a poset for a monomial tropical polyhedra $\monomial{V}$ which generalises the face poset of the Scarf complex. 
A point $p \in \TTclosed^d$ is a \emph{Scarf point} if there is a unique subset $X \subseteq V \cup \modmaxunit$ with $\tbary(X) = p$ (see Equation~\ref{eq:tropical+barycenter}).
\added{The \emph{Scarf poset} is the set of all Scarf points with maximal element $\bm{\infty}$ and minimal element $\bm{-\infty}$ with the standard partial order on $\TTclosed^d$.} 

\begin{lemma} \label{lem:Scarf+covector}
  A point $p \in \TTclosed^d$ is a Scarf point if and only if
  \begin{enumerate}[label=\alph*)]
  \item $\neighbour_p(\neighbour_p(0)) = [d]$, \label{cond:existence}
  \item for each node $v$ in $\neighbour_p(0)$, there exists $i \in [d]$ such that $\neighbour_p(0) \cap \neighbour_p(i) = \{v\}$. \label{cond:unique}
  \end{enumerate}
\end{lemma}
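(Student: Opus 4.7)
The plan is to translate the Scarf condition into the language of covectors by identifying $\neighbour_p(0)$, under $e^{(j)} \leftrightarrow \bar{e}^{(j)}$, with the maximal subset $S_{\max} \subseteq V \cup \modmaxunit$ of elements that are $\le p$ componentwise. The key observation is that any $X \subseteq V \cup \modmaxunit$ with $\tbary(X) = p$ must satisfy $X \subseteq S_{\max}$, since each element of $X$ is bounded above by $\tbary(X) = p$. Moreover, for $x \in S_{\max}$ and $i \in [d]$, membership $x \in \neighbour_p(i)$ is precisely the condition that $x$ \emph{achieves} $p_i$ (namely $x_i = p_i$ for vertices, and the analogous condition for rays, which depends on whether $p_i$ equals $\pm\infty$). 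Consequently, $\tbary(X) = p$ if and only if $X \subseteq S_{\max}$ and every $i \in [d]$ is achieved by some element of $X$.

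For the $(\Leftarrow)$ direction, assume (a) and (b). Condition (a) asserts that every coordinate is achieved by an element of $S_{\max}$, so $\tbary(S_{\max}) = p$. Suppose $X \subseteq V \cup \modmaxunit$ is any other subset with $\tbary(X) = p$; then $X \subseteq S_{\max}$. If some $v \in S_{\max} \setminus X$ existed, condition (b) would supply $i \in [d]$ with $\neighbour_p(0) \cap \neighbour_p(i) = \{v\}$; since $X \subseteq \neighbour_p(0)$ and $v \notin X$, no element of $X$ would lie in $\neighbour_p(i)$, so $p_i$ would not be achieved in $X$, a contradiction. Hence $X = S_{\max}$, and $p$ is a Scarf point.

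For the $(\Rightarrow)$ direction, assume $p$ is a Scarf point with unique preimage $X$. Since any $x \in S_{\max}$ satisfies $x \le p$, adjoining it to $X$ does not change the $\tbary$; thus $\tbary(X \cup \{x\}) = p$, and uniqueness forces $x \in X$, so $X = S_{\max}$. In particular $\tbary(S_{\max}) = p$, yielding (a). For (b), suppose some $v \in S_{\max}$ violates the condition: for every $i \in [d]$, either $v \notin \neighbour_p(i)$ (so $v$ is not achieving $p_i$), or some $v' \ne v$ in $\neighbour_p(0)$ also lies in $\neighbour_p(i)$. Then $S_{\max} \setminus \{v\}$ still achieves every $p_i$, so $\tbary(S_{\max} \setminus \{v\}) = p$, contradicting uniqueness of $X = S_{\max}$.

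The main bookkeeping obstacle is the correct interpretation of the incidence $i \in \neighbour_p(x)$ for $x \in \modmaxunit$ in terms of the extended sector definitions, particularly at coordinates where $p_i \in \{-\infty, +\infty\}$; once this translation dictionary between the bipartite covector graph and the componentwise maxima is recorded, both directions follow from the same bookkeeping around $S_{\max}$.
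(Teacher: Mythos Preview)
Your approach is correct and essentially the same as the paper's: both arguments identify $\neighbour_p(0)$ with the maximal set $S_{\max}$ of generators below $p$, translate $\tbary(X)=p$ into ``$X\subseteq S_{\max}$ and every coordinate is achieved,'' and then read off condition~(a) as existence and condition~(b) as uniqueness. Your write-up is in fact more careful than the paper's terse version, making explicit the containment $X\subseteq S_{\max}$ and the removal argument for the $(\Rightarrow)$ direction.
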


\begin{proof}
Let $X \subseteq V \cup \modmaxunit$ be a set of points and $\tilde{X} \subseteq V \cup \maxunit$ its corresponding set obtained by swapping $\bar{e}^{(i)}$ for $e^{(i)}$.
  The set $X$ fulfils $\tbary(X) = p$ if and only if for each $i \in [d]$ there is an element $x \in \tilde{X} \cap \neighbour_p(0)$ such that $i$ is adjacent with $x$, which condition \ref{cond:existence} ensures the existence of. 
  The subset $X$ of $V$ is not the unique set defining $p$ as its barycenter if and only if there is a point in $X$ which does not uniquely define any coordinate of $p$.
  This translates exactly to condition \ref{cond:unique}.
\end{proof}

Recall that given two elements $x,y$ in a poset $P$, we say $y$ \emph{covers} $x$ if $x < y$ and there does not exists $z \in P$ such that $x< z < y$.

\begin{proposition} \label{prop:Scarf+cover+subposet}
  The Scarf poset is a cover-preserving subposet of the CP-order, the max-min poset, the vertex-facet lattice and the $\max$-lattice.
\end{proposition}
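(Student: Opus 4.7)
The plan is to prove the statement in two stages: containment of the Scarf poset in each of the four posets, followed by cover-preservation.

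For containment, I appeal to Lemma~\ref{lem:Scarf+covector}. Condition~(a) gives $\neighbour_p(\neighbour_p(0)) = [d]$ for any Scarf point $p$, and since every $v \in \neighbour_p(0)$ satisfies $0 \in \neighbour_p(v)$ by definition, one in fact has $\neighbour_p(\neighbour_p(0)) = [d]_0$. By Corollary~\ref{coro:characterization+covector+max+lattice}, $p$ lies in the $\max$-lattice. For the CP-order, I check the characterisation of Lemma~\ref{lem:CP+covector}: given $i \in [d]$, take any $v \in \neighbour_p(0) \cap \neighbour_p(i)$ supplied by~(a); by~(b) there is some coordinate $j$ with $\neighbour_p(0) \cap \neighbour_p(j) = \{v\}$, so every $u \in \neighbour_p(0) \setminus \{v\}$—and in particular every $u \in \neighbour_p(0) \setminus \neighbour_p(i)$—fails $j \in \neighbour_p(u)$. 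Hence $\neighbour_p(u) \nsubseteq \neighbour_p(v)$, so $p$ is a characteristic point. Containment in the max-min poset and the affine vertex-facet lattice now follows from the already established chain CP-order $\subseteq$ max-min poset $\subseteq$ vertex-facet lattice.

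For cover-preservation it is enough to argue in the $\max$-lattice, since the other three posets are subposets of it, so an intermediate element in any of them would also be an intermediate element in the $\max$-lattice. Suppose $p \lessdot q$ in the Scarf poset. Scarf uniqueness forces the labels to be $X_p = \{v \in V \cup \modmaxunit : v \leq p\}$ and $X_q = \{v \in V \cup \modmaxunit : v \leq q\}$, with $X_p \subsetneq X_q$ since otherwise $p = \tbary(X_p) = \tbary(X_q) = q$. Let $r$ be any element of the $\max$-lattice with $p < r < q$. By Lemma~\ref{lem:tropical+barycenter} its maximal label is $X_r = \{v \in V \cup \modmaxunit : v \leq r\}$, which by the $\max$-lattice characterisation satisfies $\tbary(X_r) = r$, and clearly $X_p \subseteq X_r \subseteq X_q$. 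I claim $r$ is itself a Scarf point, which will contradict $p \lessdot q$. Condition~(a) of Lemma~\ref{lem:Scarf+covector} holds because $r$ lies in the $\max$-lattice. For~(b), fix $v \in X_r \subseteq X_q$; by the Scarf property of $q$ there is a coordinate $i$ with $v_i = q_i$ for which $v$ is the unique such element of $X_q$. Since $v \leq r \leq q$, we get $r_i = q_i$, and since $X_r \subseteq X_q$ the element $v$ remains the unique element of $X_r$ whose $i$th coordinate equals $r_i$, verifying~(b).

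The main delicate point is keeping the treatment of the $\modmaxunit$-generators consistent with that of the ordinary vertices, so that the auxiliary conventions on the sectors of rays (the edges $(e^{(i)},0)$ at points with $p_i = \infty$) do not cause bookkeeping errors. The key observation that unlocks the argument is that Scarf uniqueness forces the labelling set of a Scarf point $p$ to be exactly $\{v \in V \cup \modmaxunit : v \leq p\}$; once this is available, covector-level conditions translate directly into componentwise comparisons between labels, and the rest proceeds as above.
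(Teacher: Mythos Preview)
Your containment argument for the CP-order contains a logical slip. You apply condition~(b) of Lemma~\ref{lem:Scarf+covector} to $v$, obtaining a coordinate $j$ with $\neighbour_p(0) \cap \neighbour_p(j) = \{v\}$, and then observe that $j \notin \neighbour_p(u)$ for every $u \in \neighbour_p(0) \setminus \{v\}$. But since $j \in \neighbour_p(v)$, this only yields $\neighbour_p(v) \nsubseteq \neighbour_p(u)$, which is the wrong direction: Lemma~\ref{lem:CP+covector} asks for $\neighbour_p(u) \nsubseteq \neighbour_p(v)$. The fix is to apply~(b) to $u$ instead: for each $u \in \neighbour_p(0) \setminus \neighbour_p(i)$, condition~(b) supplies a coordinate $k$ with $\neighbour_p(0) \cap \neighbour_p(k) = \{u\}$; since $v \in \neighbour_p(0)$ and $v \neq u$, we get $k \in \neighbour_p(u) \setminus \neighbour_p(v)$, hence $\neighbour_p(u) \nsubseteq \neighbour_p(v)$ as required. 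The paper's own proof of this inclusion is simply a one-line appeal to comparing the two lemmas, so once corrected your argument is the intended one spelled out.

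Your cover-preservation argument is correct and in fact more self-contained than the paper's. The paper asserts without further comment that if $p \lessdot q$ in the Scarf poset then $|X_q| = |X_p| + 1$---implicitly using that every subset of a Scarf label is again a Scarf label, so the Scarf poset is graded by cardinality---and concludes that no intermediate label set can exist in the $\max$-lattice. Your route, showing directly that any intermediate $\max$-lattice element $r$ inherits condition~(b) from $q$ via $X_r \subseteq X_q$ and is therefore itself a Scarf point, reaches the same contradiction while making the simplicial-complex structure explicit rather than assumed.
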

\begin{proof}
  Firstly, the Scarf poset is a subposet of all these posets as it is a subposet of the CP-order.
  This follows by comparing the condition in Lemma~\ref{lem:Scarf+covector} and Lemma~\ref{lem:CP+covector}.
  Moreover, it is enough to show that the Scarf poset is a cover-preserving subposet of the $\max$-lattice by the inclusions of the other posets.
  Now, for each element $p$ of the $\max$-lattice there is a unique inclusionwise maximal set $X_p$ of generators with $\tbary(X_p) = p$.
  The partial order of the elements in the $\max$-lattice is just the order of these sets by inclusion.
  If $p$ is a Scarf point covered by the Scarf point $q$, then $|X_q| = |X_p| + 1$.
  Hence, $p$ is also covered by $q$ in the $\max$-lattice.
\end{proof}

We say a monomial tropical polyhedron $\monomial{V}$ is \emph{generic} if $u, v \in V$ with $u_i = v_i$ implies there exists $w \in V$ such that $w_k < \max(u_k, v_k)$ for all $k \in [d]$ with $\max(u_k,v_k) \neq -\infty$.
The following theorem shows that under this genericity assumption, most of the face posets presented so far coincide.
This notion of genericity is motivated further in Section~\ref{subsec:genericity} by its connection to monomial ideals.

\begin{theorem} \label{thm:generic+points+all+posets+coincide}
  If a monomial tropical polyhedron is generic, then the affine part of the vertex-facet lattice, the max-min poset, the CP-order and the Scarf poset are isomorphic. 
\end{theorem}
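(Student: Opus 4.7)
The plan is to close the chain of subposet inclusions from Proposition~\ref{prop:Scarf+cover+subposet}, namely Scarf $\subseteq$ CP-order $\subseteq$ max-min $\subseteq$ affine part of the vertex-facet lattice, into equalities under the genericity hypothesis. It therefore suffices to show the reverse inclusion: every element of the affine vertex-facet lattice is a Scarf point. The verification will be carried out via the covector-graph characterisation of Lemma~\ref{lem:Scarf+covector}.

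Let $p = m_{\overline{S}}$ correspond to a closed set $S \subseteq \overline{V}$ of the vertex-facet lattice. Condition (a) of Lemma~\ref{lem:Scarf+covector}, that $\neighbour_p(\neighbour_p(0)) = [d]$, is immediate from Corollary~\ref{coro:characterization+covector+max+lattice} since $p$ lies in the $\max$-lattice. For condition (b) I split by the type of element $v \in \neighbour_p(0)$. If $v = \bar{e}^{(k)}$, I take $i = k$: the presence of $\bar{e}^{(k)}$ in $\overline{S}$ forces $p_k = \infty$, and a direct inspection of the sectors $S_k(w)$ for vertices $w$ and $S_k(e^{(j)})$ for other rays (evaluated via the lifted point $\tilde{p}$ from Section~\ref{subsec:covector+decomposition}) shows that $\neighbour_p(0) \cap \neighbour_p(k) = \{\bar{e}^{(k)}\}$. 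If $v \in V \cap S$, the task reduces to finding a coordinate $i$ with $v_i = p_i$, since under the genericity hypothesis no other vertex of $V$ can match $p_i$ in that coordinate, and $\bar{e}^{(i)} \notin \overline{S}$ because $p_i$ is finite, making $v$ automatically the unique element of $\neighbour_p(0) \cap \neighbour_p(i)$.

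The main obstacle is proving that no vertex $v \in V \cap S$ is \emph{passive}, meaning $v_i < p_i$ in every coordinate. I plan to argue this by contradiction, in fact without invoking genericity. Suppose $v$ is passive; since $v$ contributes nothing to the componentwise maximum, $m_{\overline{S \setminus \{v\}}} = p$. Any apex $a \in \overline{F}$ incident with every element of $S \setminus \{v\}$ then satisfies $a \geq p$ componentwise, since incidence of $a$ with a remaining vertex $w$ gives $w \leq a$ by Definition~\ref{def:vertex+apex+incidence}, and incidence with a remaining ray $e^{(k)}$ gives $a_k = \infty$. If such an $a$ were additionally incident with $v$, there would be a coordinate $j$ with $v_j = a_j \geq p_j > v_j$, a contradiction. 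Consequently $v$ lies outside the closure $\clos{S \setminus \{v\}}$, so $\clos{S \setminus \{v\}}$ is a closed set strictly smaller than $S$ whose image under the order embedding of the affine vertex-facet lattice into the $\max$-lattice is still $p$. This contradicts injectivity of that embedding. With passive vertices excluded, every vertex of $S$ realises $p_i$ in some coordinate, and Lemma~\ref{lem:Scarf+covector} then certifies $p$ as a Scarf point, collapsing the inclusion chain into equalities.
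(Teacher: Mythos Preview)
Your argument is correct. Both you and the paper close the chain of inclusions from Proposition~\ref{prop:Scarf+cover+subposet} by showing that under genericity every element of the affine vertex-facet lattice is already a Scarf point, but the routes differ. The paper argues at the level of the coatoms: comparing Lemma~\ref{lem:characterization+covectors+facet+apices} with Lemma~\ref{lem:Scarf+covector} shows that each principal facet-apex is a Scarf point once no two generators share a coordinate value, and then Proposition~\ref{prop:Scarf+cover+subposet} is invoked to push this down to all elements (implicitly using that every closed set sits below some facet-apex and that subsets of the unique defining set of a Scarf point are again Scarf points). You instead verify the Scarf conditions of Lemma~\ref{lem:Scarf+covector} for an arbitrary closed set $S$ directly, and the core of your argument is the ``no passive vertex'' step: if some $v \in S \cap V$ were strictly dominated by $m_{\overline S}$, then $\clos{S\setminus\{v\}}$ would be a strictly smaller closed set with the same image in the $\max$-lattice, contradicting the injectivity of the embedding of the affine vertex-facet lattice into the $\max$-lattice. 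A pleasant feature of your approach is that this step is genuinely independent of the genericity hypothesis; genericity enters only at the very end, to upgrade ``$v$ attains some coordinate of $p$'' to ``$v$ is the \emph{unique} element of $\neighbour_p(0)$ attaining it''. The paper's proof is terser because it leans on the two covector lemmas already in place, whereas yours is more self-contained and isolates precisely where the hypothesis is used.
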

\begin{proof}
By Proposition~\ref{prop:Scarf+cover+subposet}, it suffices to show a principal facet-apex $a$ is a Scarf point; equivalently that it satisfies the conditions of Lemma~\ref{lem:Scarf+covector}.
  Comparing the characterisation of facet-apices in Lemma~\ref{lem:characterization+covectors+facet+apices}, the first condition $\neighbour_p(\neighbour_p(0)) = [d]$ is satisfied regardless of genericity.
  Suppose the second condition does not hold, as $\neighbour_a(0)$ has no degree one nodes there must exist $u, v \in V$ and $i \in [d]$ such that $u, v \in \neighbour_a(0) \cap \neighbour_a(i)$, or equivalently that $u_i = v_i = a_i$.
  By genericity, there exists $w \in V$ such that $w_k < \max(u_k,v_k) \leq a_k$, or equivalently that $w$ is a node of degree one in $\neighbour_a(0)$, contradicting Lemma~\ref{lem:characterization+covectors+facet+apices}.
\end{proof}

\section{Monomial ideals} \label{sec:monomial+ideals}

\subsection{A dictionary}

Let $\KK$ be a field, we consider $S = \KK[x_1,\dots,x_d]$ the $d$-variate polynomial ring with coefficients in $\KK$.
An ideal $I$ of $S$ is \emph{monomial} if it is generated by monomials ${\bf x}^u = x_1^{u_1}\cdots x_d^{u_d}$.
Monomial ideals are completely determined by the monomials they contain.
Therefore we can encode monomial ideals as a subset of $\ZZ^d_{\geq 0}$ by considering the set of exponents of all monomials $\supp(I) = \SetOf{u \in \ZZ^d}{{\bf x}^u \in I}$ called the \emph{support} of the monomial ideal.

As $I$ is always finitely generated and is closed under multiplication by $S$, its support $\supp(I)$ is given by a finite set of lattice points with a copy of the positive orthant attached to each.
This structure is very similar to that of a monomial tropical polyhedron, and the following construction shows we can associate a monomial tropical polyhedron to $I$ that encodes much of its information.
Let $U \subset \supp(I)$ be the subset corresponding to the unique minimal generating set of $I$.
For each $u \in U \subset \ZZ^d_{\geq 0}$, we define $\tilde{u} \in \TTmax^d$ by
\begin{equation} \label{eq:cech+hull}
\tilde{u}_i =
\begin{cases}
-\infty & u_i = 0 \\
u_i & u_i \neq 0
\end{cases} \enspace .
\end{equation}
Note that as $\tilde{u}$ is always an element of $(\ZZ_{>0} \cup \{-\infty\})^d$, we can always recover $u$ from $\tilde{u}$.
Denote the set of all such $\tilde{u}$ by $V_I \subset \TTmax^d$, the monomial tropical polyhedron corresponding to $I$ is denoted $\monomial{V_I}$.

This construction appears in the monomial ideal literature as the \emph{\v{C}ech hull} of a monomial ideal.

\begin{definition}[\cite{Miller:1998}]
Given a monomial ideal $I \subseteq S$, the \emph{\v{C}ech hull} of $I$ is the $S$-module
\[
\tilde{I} = \IdealOf{{\bf x}^u}{u \in \ZZ^n \text{ and } {\bf x}^{u^+} \in I} \subseteq \KK[x_1^{\pm},\dots,x_d^{\pm}] \quad , \quad u^+ = \max(u,0) \enspace .
\]
\end{definition}
The first two images of Figure \ref{fig:alexander+duality} show a diagrammatic construction of the \v{C}ech hull.
From \cite[Proposition 2.6]{Miller:1998}, we immediately get the following lemma.

\begin{lemma} \label{lem:ideal+polyhedron+correspondence}
Let $I$ be a monomial ideal.
Its support is encoded by the monomial tropical polyhedron $\monomial{V_I}$, explicitly:
\[
\supp(I) = \monomial{V_I} \cap \ZZ_{\geq 0}^d \quad , \quad \supp(\tilde{I}) = \monomial{V_I} \cap \ZZ^d .
\]
\end{lemma}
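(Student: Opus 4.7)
The plan is to reduce both equalities to straightforward comparisons between componentwise inequalities in $\ZZ^d$ and in $\TTmax^d$, using the representation $\monomial{V_I} = V_I + \RR_{\geq 0}^d$ from~\eqref{eq:representation+sum+orthant}.

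First I would unpack the supports. Since $I$ is generated by the monomials $\{{\bf x}^u\}_{u \in U}$, a lattice point $p \in \ZZ_{\geq 0}^d$ belongs to $\supp(I)$ if and only if there exists $u \in U$ with $u \leq p$ componentwise. For the \v{C}ech hull, the defining relation ${\bf x}^{p^+} \in I$ translates directly to $p^+ \in \supp(I)$, i.e.\ there exists $u \in U$ with $u \leq p^+$ componentwise, where $p^+ = \max(p,0)$.

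Next I would prove the key coordinatewise equivalence: for $u \in U$ and $p \in \ZZ^d$,
\[
\tilde u \leq p \text{ componentwise in } \TTmax^d \quad \Longleftrightarrow \quad u \leq p^+ \text{ componentwise in } \ZZ^d \enspace .
\]
The definition~\eqref{eq:cech+hull} makes this a coordinate-by-coordinate check. If $u_i = 0$, then $\tilde u_i = -\infty$ so the left-hand inequality is trivial, and the right-hand side reduces to $0 \leq p^+_i$, which also holds always. If $u_i > 0$, then $\tilde u_i = u_i > 0$, so $\tilde u_i \leq p_i$ forces $p_i > 0$ and hence $p^+_i = p_i$; conversely $u_i \leq p^+_i$ with $u_i > 0$ forces $p^+_i > 0$ and hence $p_i = p^+_i$. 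In both directions the two inequalities collapse to $u_i \leq p_i$. This is really the only content of the lemma.

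Finally I would combine the pieces. For the first equality, restrict to $p \in \ZZ_{\geq 0}^d$, where $p = p^+$: by the equivalence above, $p \in \monomial{V_I}$ iff some $\tilde u \leq p$ iff some $u \leq p$ iff $p \in \supp(I)$. For the second equality, allow $p \in \ZZ^d$: again $p \in \monomial{V_I}$ iff some $\tilde u \leq p$ iff some $u \leq p^+$ iff $p^+ \in \supp(I)$ iff $p \in \supp(\tilde I)$. I do not expect any serious obstacle; the only subtlety is keeping the coordinates with $u_i=0$ (and hence $\tilde u_i = -\infty$) separate from those with $u_i > 0$, which is precisely what the \v{C}ech hull construction is designed to handle, as in~\cite[Proposition 2.6]{Miller:1998}.
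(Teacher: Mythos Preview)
Your argument is correct and complete. The paper itself does not give a proof of this lemma at all: it simply states that the result follows immediately from \cite[Proposition~2.6]{Miller:1998}. Your coordinatewise verification of the equivalence $\tilde u \leq p \Leftrightarrow u \leq p^+$ is exactly the content behind that citation, so you have written out what the paper leaves implicit.
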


\begin{remark}
The \v{C}ech hull was introduced to better exhibit Alexander duality of monomial ideals.
We shall see in Section \ref{sec:alexander+duality} that monomial tropical polyhedra carry this duality even more naturally than the \v{C}ech hull.
\end{remark}

A monomial ideal is irreducible if it is of the form $\mathfrak{m}^a = \IdealOf{x_i^{a_i}}{a_i \geq 1}$ where $a \in \ZZ_{\geq 0}^d$.
An \emph{irreducible decomposition} of a monomial ideal $I$ is a representation
\begin{equation} \label{eq:irreducible+decomposition}
I = \mathfrak{m}^{a^{(1)}} \cap \dots \cap \mathfrak{m}^{a^{(r)}} \enspace .
\end{equation}
Expression~\eqref{eq:irreducible+decomposition} is unique if we restrict to \emph{irredundant} decompositions i.e., no component can be omitted.
\added{If the decomposition is unique, the ideals listed in~\eqref{eq:irreducible+decomposition} are the \emph{irreducible components of $I$.}}

Let $A$ be the set of exponents $a$ defining the irredundant irreducible decomposition of $I$.
Suppose $a \in A$ has no coordinates equal to zero, the support of $\mathfrak{m}^a$ is the set of non-negative integer lattice points $p$ such that $p_i \geq a_i$ for some $i \in [d]$.
Equivalently, the support of $\mathfrak{m}^a$ is cut out by the tropical linear inequality $\bigoplus_{i \in [d]} -a_i \odot p_i \geq 0$.
As the support is precisely those points covered by the tropical halfspace with apex $a$, one may expect the irredundant irreducible decomposition can be recovered tropically from $\monomial{V_I}$.
However, if $a$ has zero coordinates then $\mathfrak{m}^a$ has different behaviour which our tropical inequality must capture.
Specifically, when $a_i = 0$ the corresponding variable $x_i$ is omitted from the generators of $\mathfrak{m}^a$ entirely.
For each $a \in A$, we define $\hat{a} \in \TTmin^d$ by
\begin{align*}
\hat{a}_i =
\begin{cases}
\infty & a_i = 0 \\
a_i & a_i \neq 0
\end{cases} \enspace .
\end{align*}
Denote the set of all such $\hat{a}$ by $F_I \subset \TTmin^d$.

\begin{proposition} \label{prop:irred+comp+facet+apex}
Let $I$ be a monomial ideal and $F_I$ the set of defining facet-apices of $\monomial{V_I}$.
The irredundant irreducible decomposition of $I$ is
\begin{align*}
I = \bigcap_{\hat{a} \in F_I} \mathfrak{m}^a \enspace .
\end{align*}
\end{proposition}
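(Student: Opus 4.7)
The plan is to translate the statement into the tropical setting using the dictionary already established, then appeal to the unique exterior description of a monomial tropical polyhedron (Corollary~\ref{cor:unique+exterior+description}). The bridge is Lemma~\ref{lem:ideal+polyhedron+correspondence}, which says that the support of $I$ is recovered from $\monomial{V_I}$ by intersecting with $\ZZ_{\geq 0}^d$. Thus, proving the irredundant irreducible decomposition reduces to showing that the principal halfspaces of $\monomial{V_I}$ are precisely the supports of the components $\mathfrak{m}^a$.

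First I would verify the key identity at the level of supports: for each $\hat{a} \in F_I$,
\[
\supp(\mathfrak{m}^a) \ = \ H(-\hat{a}) \cap \ZZ_{\geq 0}^d \enspace .
\]
When $a$ has no zero coordinates, this is essentially the observation made in the paragraph preceding the proposition: the tropical inequality $\bigoplus_{i} -a_i \odot p_i \geq 0$ cuts out exactly the non-negative lattice points with $p_i \geq a_i$ for some $i$. When some $a_i = 0$, the corresponding variable $x_i$ is omitted from $\mathfrak{m}^a$, so membership in $\mathfrak{m}^a$ only requires $p_j \geq a_j$ for some $j$ with $a_j \neq 0$. The substitution $\hat{a}_i = \infty$ for $a_i = 0$ is precisely what kills the term $-\hat{a}_i \odot p_i = -\infty$ in the tropical inequality, so the halfspace $H(-\hat{a})$ captures exactly this condition. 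Hence the equality of supports holds in full generality.

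Next, by Lemma~\ref{lem:ideal+polyhedron+correspondence} and Corollary~\ref{cor:unique+exterior+description} applied to $\monomial{V_I}$, we have
\[
\supp(I) \ = \ \monomial{V_I} \cap \ZZ_{\geq 0}^d \ = \ \bigcap_{\hat{a} \in F_I} H(-\hat{a}) \cap \ZZ_{\geq 0}^d \ = \ \bigcap_{\hat{a} \in F_I} \supp(\mathfrak{m}^a) \enspace .
\]
Since monomial ideals are uniquely determined by their supports, this gives $I = \bigcap_{\hat{a} \in F_I} \mathfrak{m}^a$.

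Finally, I would argue irredundancy: suppose for contradiction that some $\hat{a} \in F_I$ could be omitted. Then the intersection of the remaining halfspaces would still equal $\monomial{V_I}$ on the lattice $\ZZ_{\geq 0}^d$, and one can upgrade this equality from the integer lattice to all of $\TTmax^d$ using that apices of $\monomial{V_I}$ have integer entries (as the generators do) together with the density of $\ZZ$-translates in the boundary strata. This would contradict the uniqueness of the minimal exterior description in Corollary~\ref{cor:unique+exterior+description}. The main obstacle is precisely this last density/integrality step, since the uniqueness result in Corollary~\ref{cor:unique+exterior+description} is stated for halfspace descriptions of tropical polyhedra in $\TTmax^d$, not for the integer-point support; one needs to check that redundancy of a component $\mathfrak{m}^a$ forces redundancy of $H(-\hat{a})$ in the tropical exterior description.
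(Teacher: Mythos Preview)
Your proposal is correct and follows essentially the same route as the paper: both reduce to the identity $\supp(\mathfrak{m}^a) = H(-\hat{a}) \cap \ZZ_{\geq 0}^d$, then intersect the exterior description of $\monomial{V_I}$ with $\ZZ_{\geq 0}^d$ via Lemma~\ref{lem:ideal+polyhedron+correspondence}. The only cosmetic difference is that you verify the key identity directly by case analysis on whether $a_i=0$, whereas the paper observes $\monomial{V_{\mathfrak{m}^a}} = H(-\hat{a})$ using Proposition~\ref{prop:facetapex+characterisation} and then applies Lemma~\ref{lem:ideal+polyhedron+correspondence} once more.

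Your caution about irredundancy is well placed and in fact more careful than the paper, which simply asserts that irredundancy follows from minimality of the exterior description. The gap you flag is easily closed: by Proposition~\ref{prop:facetapex+characterisation} every finite coordinate of a facet-apex $\hat{a}$ equals some coordinate of a generator in $V_I$, hence lies in $\ZZ_{>0}$; so the lattice point $p$ with $p_i = \hat{a}_i - 1$ on finite coordinates (and arbitrarily large on the others) lies in $\ZZ_{\geq 0}^d$, satisfies every other halfspace, but violates $H(-\hat{a})$. Thus redundancy of $\mathfrak{m}^a$ on $\ZZ_{\geq 0}^d$ would indeed force redundancy of $H(-\hat{a})$ in $\TTmax^d$, contradicting Corollary~\ref{cor:unique+exterior+description}.
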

\begin{proof}
We prove the equivalent statement that $\supp(I) = \bigcap_{\hat{a} \in F_I} \supp(\mathfrak{m}^a)$.
By Lemma \ref{lem:ideal+polyhedron+correspondence}, $\supp(\mathfrak{m}^a)$ is equal to the monomial tropical polyhedron $\monomial{V_{\mathfrak{m}^a}}$ intersected with the \added{integer points of the} positive orthant.
This is the monomial tropical polyhedron generated by vertices $\{v^{(i)} \added{\mid a_i \neq 0\}}$ whose entries are $-\infty$ everywhere except $v_i^{(i)} = a_i$\added{; if $a_i = 0$ then $v_i^{(i)}$ is not included by the definition of $\mathfrak{m}^a$.}

\added{Let $b$ be the apex of a principal halfspace of $\monomial{V_{\mathfrak{m}^a}}$.
By Proposition \ref{prop:facetapex+characterisation}, whenever $a_i \neq 0$ we must have $b_i$ finite else $b > v^{(i)}$ by condition~\eqref{eq:apex+cond+1}.
Furthermore, as $v^{(i)}$ is the only generator with finite $i$th coordinate, we must have $b_i = a_i$ by condition~\eqref{eq:apex+cond+2}.
Whenever $a_i=0$, all generators of $\monomial{V_{\mathfrak{m}^a}}$ have $-\infty$ in their $i$-th coordinate and so $b_i = \infty$ to satisfy condition~\eqref{eq:apex+cond+2}.
This implies that $b = a$ and that} $\monomial{V_{\mathfrak{m}^a}}$ has a single principal halfspace $H(-\hat{a})$, and is therefore equal to it.

Let $F_I$ be the set of facet-apices of $\monomial{V_I}$, then its minimal exterior representation can be expressed
\begin{equation} \label{eq:ext+rep}
\monomial{V_I} = \bigcap_{\hat{a} \in F_I} H(-\hat{a}) = \bigcap_{\hat{a} \in F_I} \monomial{V_{\mathfrak{m}^a}} \enspace .
\end{equation}
Intersecting \eqref{eq:ext+rep} with $\ZZ_{\geq 0}^d$ and using Lemma \ref{lem:ideal+polyhedron+correspondence}, we get the irreducible decomposition
\begin{align*}
\supp(I) = \bigcap_{\hat{a} \in F_I} \supp(\mathfrak{m}^a) \enspace .
\end{align*}
The final claim that this is irredundant comes from \eqref{eq:ext+rep} being the minimal exterior description.
\end{proof}

\subsection{Alexander duality} \label{sec:alexander+duality}
{\added{We recall Alexander duality, a form of duality for monomial ideals which arises by reversing the roles of generators and irreducible components.}}

A monomial ${\bf x}^a$ \emph{strictly divides} another monomial ${\bf x}^c$ if $a_i < c_i$ or $a_i = c_i = 0$ for all $i \in [d]$.
Given two vectors $a, c \in \ZZ_{\geq 0}^d$ such that ${\bf x}^a$ strictly divides ${\bf x}^c$, we let $c \smallsetminus a$ denote the vector with $i$th coordinate
\[
c_i \smallsetminus a_i =
\begin{cases}
c_i - a_i & a_i \geq 1 \\
0 & a_i = 0
\end{cases} \enspace .
\]
\begin{definition}
Let $I$ be a monomial ideal whose minimal generators all strictly divide ${\bf x}^c$, the \emph{Alexander dual} of $I$ with respect to $c$ is
\begin{align*}
I^{[c]} = \IdealOf{{\bf x}^{c \smallsetminus a}}{\mathfrak{m}^a \text{ is an irreducible component of } I} \enspace .
\end{align*}
\end{definition}
A monomial tropical polyhedron $\monomial{V}$ also comes with a natural duality, namely the complementary monomial tropical polyhedron $\complementarymonomial{V}$.
The following proposition shows this duality is equivalent to Alexander duality for monomial ideals.

\begin{proposition} \label{prop:alexander+duality}
Let $I$ be a monomial ideal whose minimal generators all strictly divide ${\bf x}^c$.
The support of its Alexander dual $I^{[c]}$ is encoded by the complementary monomial tropical polyhedron $\complementarymonomial{V_I}$, explicitly:
\[
\supp(I^{[c]}) = \left(c - \complementarymonomial{V_I}\right) \cap \ZZ_{\geq 0}^d \enspace ,
\]
where $c - \complementarymonomial{V_I}$ is the Minkowski difference of the vector $c \in \ZZ_{\geq 0}^d$ and the complementary monomial polyhedron generated by $V_I$.
\end{proposition}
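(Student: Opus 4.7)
The plan is to unpack both sides of the claimed equality and show they consist of the same non-negative integer points. On the algebraic side, the Alexander dual $I^{[c]}$ is by definition generated by the monomials ${\bf x}^{c \smallsetminus a}$ for each irreducible component $\mathfrak{m}^a$ of $I$, so $\supp(I^{[c]})$ is the union of translates $(c \smallsetminus a) + \ZZ_{\geq 0}^d$ as $a$ ranges over the exponents of the irredundant irreducible decomposition. The preceding proposition identifies these $a$ bijectively with the facet-apices $\hat{a} \in F_I$ of $\monomial{V_I}$ via the convention $\hat{a}_i = +\infty$ when $a_i = 0$.

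First, I would unpack $\complementarymonomial{V_I}$ using Theorem~\ref{thm:complementary-cones}, which gives $\complementarymonomial{V_I} = \tconv{F_I} \oplus \tcone{\minunit}$ as a min-tropical polyhedron in $\TTmin^d$. Exactly as the max-tropical sum with $\tcone{\maxunit}$ realises the classical Minkowski sum with $\RR_{\geq 0}^d$ in~\eqref{eq:representation+sum+orthant}, the dual min-tropical sum with $\tcone{\minunit}$ realises the classical Minkowski difference with $\RR_{\geq 0}^d$. Thus $\complementarymonomial{V_I}$ is the lower set of the apices:
\[
\complementarymonomial{V_I} = \SetOf{p \in \TTmin^d}{p \leq \hat{a} \text{ coordinatewise for some } \hat{a} \in F_I} \enspace .
\]
Negating and translating by $c$ then yields
\[
c - \complementarymonomial{V_I} = \bigcup_{\hat{a} \in F_I} \SetOf{q \in \TTclosed^d}{q \geq c - \hat{a}} \enspace .
\]

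Next, I would compare $c - \hat{a}$ with $c \smallsetminus a$ coordinate-by-coordinate. When $a_i \geq 1$, we have $\hat{a}_i = a_i$ and hence $(c - \hat{a})_i = c_i - a_i = (c \smallsetminus a)_i$. When $a_i = 0$, we have $\hat{a}_i = +\infty$ so $(c - \hat{a})_i = -\infty$, while $(c \smallsetminus a)_i = 0$. For $q \in \ZZ_{\geq 0}^d$ the constraint $q_i \geq -\infty$ and the constraint $q_i \geq 0$ are both automatic, so the two inequalities cut out the same subset of $\ZZ_{\geq 0}^d$. Consequently,
\[
\left(c - \complementarymonomial{V_I}\right) \cap \ZZ_{\geq 0}^d = \bigcup_{\hat{a} \in F_I} \left((c \smallsetminus a) + \ZZ_{\geq 0}^d\right) = \supp(I^{[c]}) \enspace ,
\]
where the second equality is the definition of the Alexander dual combined with the previous proposition.

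The main subtlety is purely bookkeeping: apices in $F_I$ live in $\TTmin^d$ and use $+\infty$ entries to encode "no constraint in that coordinate," whereas the combinatorial $c \smallsetminus a$ operation substitutes a $0$ in precisely those coordinates. The two conventions disagree on $\TTclosed^d$ but agree after restriction to $\ZZ_{\geq 0}^d$, which is exactly the restriction imposed in the statement; once this reconciliation is made, no further technical work is required beyond invoking Theorem~\ref{thm:complementary-cones} and the previous proposition.
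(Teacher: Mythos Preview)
Your argument is correct and, in fact, more explicit than what the paper provides. The paper's own proof is a one-line deferral: it simply asserts that the proposition is a restatement of \cite[Lemma~2.11]{Miller:1998} in the language of monomial tropical polyhedra, and gives no further detail. You instead give a self-contained proof using only machinery internal to the paper---Theorem~\ref{thm:complementary-cones} for the description of $\complementarymonomial{V_I}$ as the lower set of the facet-apices, and the preceding proposition for the bijection between irreducible components of $I$ and facet-apices of $\monomial{V_I}$. The coordinate-by-coordinate reconciliation of the $+\infty$ convention for $\hat{a}$ against the $0$ convention in $c \smallsetminus a$ is exactly the right observation, and your remark that the discrepancy vanishes upon restricting to $\ZZ_{\geq 0}^d$ is the crux.

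The trade-off is the usual one: the paper's approach is economical and points the reader to where the idea originates in the monomial-ideal literature, while your approach demonstrates that the paper's own tropical framework already contains everything needed, with no appeal to \cite{Miller:1998} required. Given that Section~\ref{sec:alexander+duality} is explicitly about showing that monomial tropical polyhedra carry Alexander duality ``more naturally than \v{C}ech hulls,'' your direct argument arguably serves that rhetorical goal better.
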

\begin{proof}
This is simply a restatement of \cite[Lemma 2.11]{Miller:1998} in the language of monomial tropical polyhedra.
\end{proof}

\begin{figure}
\includegraphics[width=\textwidth]{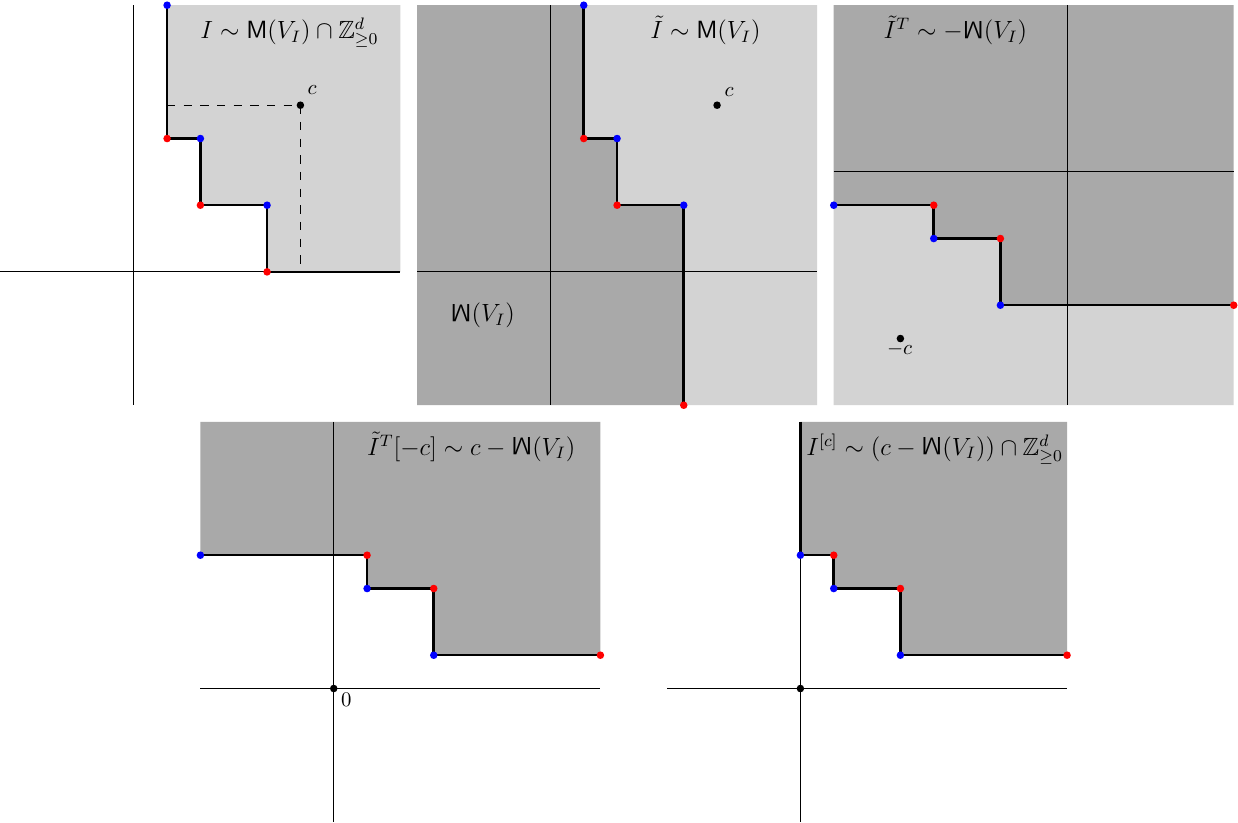}
\caption{The construction of $I^{[c]}$ from $I$ in the language of monomial tropical polyhedra and \cite{Miller:1998}.
Red dots are the vertices of $\monomial{V_I}$ and the blue dots are its facet-apices.}
\label{fig:alexander+duality}
\end{figure}

The \v{C}ech hull was introduced to exhibit Alexander duality more cleanly for monomial ideals.
Figure \ref{fig:alexander+duality} demonstrates the role the \v{C}ech hull plays in the construction of $I^{[c]}$ using the terminology of \cite{Miller:1998}.
Each step also displays the corresponding notions in the language of monomial tropical polyhedra.

We note that monomial tropical polyhedra carry this duality more naturally than \v{C}ech hulls.
The complementary cone $\complementarymonomial{V_I}$ carries all the information of $\monomial{V_I}$, and as it is a $\min$-tropical polyhedron, no translation or reflection is necessary.
Moreover, $\complementarymonomial{V_I}$ has the additional benefit of being a canonical choice and not requiring a choice of vector $c$.

\subsection{Resolutions} \label{sec:resolutions}
In the following, we study the link between resolutions of monomial ideals and \emph{lifts} of monomial tropical cones.
\added{We begin by briefly recalling the notion of cellular resolutions; for further details, see~\cite{MillerSturmfels:2005}.

A free $S$-module of rank $r$ is a module $F \simeq S(-{\bf{a}_1}) \oplus \dots \oplus S(-{\bf{a}_r})$ where $S(-{\bf a}_i)$ is the polynomial ring $S$ where the degree of each element is shifted by ${\bf a}_i$.
A \emph{chain complex} $\cF$ is a chain of maps between free $S$-modules
\[
\cF : 0 \leftarrow F_0 \leftarrow_{\delta_1} F_1 \leftarrow_{\delta_2} \cdots \leftarrow_{\delta_k} F_k \leftarrow 0
\]
such that $\delta_{i+1} \circ \delta_i = 0$ for all $i$.
A chain complex is \emph{exact} if $\ker(\delta_i) = \im(\delta_{i+1})$.
A \emph{free resolution} $\cF$ of a module $M$ is an exact chain complex such that $M \simeq F_0/\im(\delta_1)$.

We are particularly interested in free resolutions arising from polyhedral complexes.
Let $X$ be a polyhedral complex, we say that $X$ is \emph{labelled} if each vertex $v$ is labelled by some vector ${\bf a}_v \in  \ZZ_{\geq 0}^d$.
These induce labels on all faces of $X$, where the label of a face $P$ is equal to the componentwise maximum ${\bf a}_P = \max\left({\bf a}_v \mid v \subset P\right)$.
We will always consider these labels coming from the exponents of the generating set of some ideal $I_X = \langle {\bf x^{a_v}} \mid v \text{ vertex of } X \rangle$.
We will also need to consider the subcomplex $X_{\leq {\bf b}}$ of $X$ of faces $P$ with labels ${\bf a}_P \leq {\bf b}$ under the standard partial order on $\ZZ^d$.

The associated chain complex $\cF_X$ is the complex of free modules
\[
F_i = \bigoplus_{\substack{P \in X \\ \dim(P) = i-1}} S(-{\bf a}_P) \quad , \quad \delta(P) = \sum_{Q \text{ facet of } P} \sign(Q,P) {\bf x}^{{\bf a}_P - {\bf a}_Q}Q \, ,
\]
where $\sign(Q,P)$ is $\pm 1$ chosen in some consistent way such that $\delta^2 = 0$.
Note that we consider the symbols $P$ and $Q$ both as faces of $X$ and as basis vectors in degrees ${\bf a}_P$ and ${\bf a}_Q$.
We say $\cF_X$ is a \emph{cellular resolution} for $S/I_X$ if it is also a free resolution.

Cellular resolutions allow us to get a handle on purely algebraic invariants via methods from polyhedral geometry and topology.
In particular, the following proposition shows that whether $\cF_X$ is a resolution or not depends entirely on the topology of $X$.

\begin{proposition}\cite[Proposition 4.5]{MillerSturmfels:2005}\label{prop:acyclic}
$\cF_X$ is a cellular resolution for $S/I_X$ if the complex $X_{\leq {\bf b}}$ is contractible for all ${\bf b} \in \ZZ_{\geq 0}^d$.
\end{proposition}} 
\added{Note that Proposition~\ref{prop:acyclic} was originally stated as an `if and only if' statement, where `contractible' is replaced with `acyclic over $\KK$', i.e. has trivial homology over $\KK$.
However, this formulation suffices for our purposes as all of our approaches will be topological.}

We recall a class of cellular resolutions that is already well-established in the literature.
Let ${\bf t}^u = (t^{u_1},\dots,t^{u_d})$ for some real $t \added{ > 1}$.
\added{Given a monomial ideal $I$}, define the polyhedron
\begin{align*}
\added{\cP_{I,t}} &= \conv\SetOf{{\bf t}^u}{{\bf x}^u \in I} \subset \RR^d \\
&= \conv\SetOf{{\bf t}^v}{{\bf x}^v \text{ a minimal generator of } I} + \RR_{\geq 0}^d \, .
\end{align*}

\begin{definition}
The \emph{hull complex} $\hull(I)$ of a monomial ideal $I$ is the polyhedral cell complex of all bounded faces of \added{$\cP_{I,t}$} for $t\gg 0$.
This complex is naturally labelled, with each vertex corresponding to a minimal generator of $I$.
The associated cellular resolution $\cF_{\hull(I)}$ is called the \emph{hull resolution} of $I$.
\end{definition}

The hull complex is a special case of a more general class of polyhedra~\cite{DevelinYu:2007} that we detail now.
In the following, we consider the field of \added{\emph{(generalized)} real Puiseux series} $\puiseux{\RR}{t}$, whose elements 
\[
\gamma = \sum c_i t^{a_i} \ , \ a_i,c_i \in \RR \ , \ a_0 > a_1 > a_2 > \dots
\]
are (locally finite) formal power series with real exponents\added{~\cite{Markwig:2010}}.
We say $\gamma$ is positive if its leading term has positive coefficient.
This makes $\puiseux{\RR}{t}$ an ordered field via $\gamma > \delta$ if and only if $\gamma - \delta$ is positive.
As a result, one can form polyhedra over $\puiseux{\RR}{t}$ as solution sets to linear inequalities, as one would over $\RR$.
Furthermore, given a polyhedron over $\puiseux{\RR}{t}$ one can substitute $t$ for some $\tau \in \RR$ yielding an ordinary polyhedron over $\RR$.
For large $\tau$, these polyhedra are combinatorially isomorphic \cite{JoswigLohoLorenzSchroeter:2016}.

Tropical polyhedra arise as the image of polyhedra over certain valued fields. 
In particular, $\puiseux{\RR}{t}$ carries the valuation map
\begin{align*}
\val : \puiseux{\RR}{t} &\rightarrow \added{\TTmax} \\
\sum c_i t^{a_i} &\mapsto a_0
\end{align*}
where $\val(0) = -\infty$.
Restricting to $\puiseux{\RR}{t}_{\geq 0}$, the semiring of nonnegative Puiseux series, turns $\val$ into an order-preserving homomorphism of semirings.

\added{\begin{definition}
Let $Q \subseteq \TTmax^d$ be a tropical polyhedron with minimal representation
\[
Q = \tconv{V}\oplus \tcone{W} \, , \, V, W \subseteq \TTmax^d \, .
\]
A polyhedron $P \subseteq \puiseux{\RR}{t}_{\geq 0}^d$ is called a \emph{lift} of $Q$ if there exists a representation
\[
P = \conv(X) + \cone(Y) \, , \, X, Y \subseteq \puiseux{\RR}{t}_{\geq 0}^d \, ,
\]
such that $\val(X) = V$ and $\val(Y) = W$.
\end{definition}
As $\val$ is a surjective order-preserving homomorphism, it follows that $P$ being a lift of $Q$ implies $\val(P) = Q$.
Often in the literature, this condition is taken to be the definition of a lift.
However, our definition is a strengthening of this, as it ensures vertices are mapped to vertices and rays are mapped to rays.}

We can consider $\cP_{I,t}$ as a polyhedron over $\puiseux{\RR}{t}$, namely
\[
\cP_I = \conv\SetOf{{\bf t}^v}{{\bf x}^v \text{ a minimal generator of } I} + \puiseux{\RR}{t}_{\geq 0}^d \, .
\]
Furthermore 
its bounded \added{faces} give a cellular resolution of $I$: the hull complex.
The following proposition shows we can \added{perform} this procedure for any lift of $\monomial{V_I}$.
\added{Specifically, we get a cellular resolution of $I$ from the complex of bounded faces of an associated lifted polyhedron $M$ whose vertices are labelled by their image under the valuation map, i.e. $x$ is labelled by $\val(x) \in V_I$.}

\begin{proposition} \label{prop:lift+resolution}
Let $M \subset \puiseux{\RR}{t}_{>0}^d$ be a lift of $\monomial{V_I}$.
The complex of bounded faces of $M$ yields a cellular resolution of $I$.
\end{proposition}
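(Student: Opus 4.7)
The plan is to invoke the Bayer--Sturmfels criterion for cellular resolutions: a labelled cell complex $\Delta$ supports a free resolution of the monomial ideal $I$ exactly when, for each $b \in \ZZ_{\geq 0}^d$, the subcomplex $\Delta_{\leq b}$ of cells whose vertex labels all divide $\mathbf{x}^b$ is either empty or acyclic. The task reduces to constructing the labelling and verifying this acyclicity condition.

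First I would set up the labelling. Since $\val$ is order-preserving and $M$ is a lift of $\monomial{V_I}$, the recession cone of $M$ is $\puiseux{\RR}{t}_{\geq 0}^d$ and the vertices of $M$ map bijectively to the vertices of $\monomial{V_I}$. As the latter are in bijection with the minimal generators of $I$ by construction, every vertex of the bounded face complex $\cF_M$ carries a monomial label $\mathbf{x}^u$ for a minimal generator of $I$; extending by lcm gives a monomial label to each cell.

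Next, I fix $b \in \ZZ_{\geq 0}^d$ and form the truncated polyhedron
\[
M_b = M \cap \SetOf{y \in \puiseux{\RR}{t}^d}{y_i \leq t^{b_i} \text{ for all } i \in [d]} \enspace .
\]
A bounded face $\sigma$ of $M$ has all vertex labels dividing $\mathbf{x}^b$ if and only if every vertex $\tilde v$ of $\sigma$ satisfies $\tilde v \leq t^b$ componentwise, if and only if $\sigma \subseteq M_b$. Thus $(\cF_M)_{\leq b}$ is precisely the union of bounded faces of $M$ contained in $M_b$. When $\mathbf{x}^b \notin I$ this union is empty; otherwise I would construct a deformation retract of $M_b$ onto $(\cF_M)_{\leq b}$ by flowing each $p \in M_b$ in the direction $-\mathbf{1}$. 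Since $M$ has recession cone $\puiseux{\RR}{t}_{\geq 0}^d$, this flow remains inside $M_b$ and terminates on the boundary of $M$ away from the cutting hyperplanes $y_i = t^{b_i}$, landing on a face of $M$ lying entirely within $M_b$. As $M_b$ is bounded and convex, hence contractible, this establishes acyclicity of $(\cF_M)_{\leq b}$ and verifies the criterion.

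The main obstacle is the rigorous execution of the retraction over the Puiseux coefficient field and the careful identification of $(\cF_M)_{\leq b}$ with the geometric subcomplex inside $M_b$. This is precisely the geometric content of \cite[Theorem~3.2]{DevelinYu:2007}, whose argument depends only on $M$ being a lift of a monomial tropical polyhedron and therefore transfers to an arbitrary lift.
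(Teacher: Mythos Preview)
Your approach is essentially the same as the paper's: both reduce to \cite[Theorem~3.2]{DevelinYu:2007}. The difference is one of emphasis. The paper spends its effort on the structural input you assume in passing, namely that any lift $M$ of $\monomial{V_I}$ necessarily has recession cone $\puiseux{\RR}{t}_{\geq 0}^d$ and vertices lifting those of $\monomial{V_I}$; it deduces this from Minkowski--Weyl and the fact that $\val$ is an order-preserving semiring homomorphism sending extremal generators to extremal generators, so the only cone lifting $\tcone{\maxunit}$ is the classical positive orthant. You, conversely, expand what the paper leaves as a citation: the Bayer--Sturmfels acyclicity criterion and the retraction of the truncated polyhedron onto its bounded faces. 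One point the paper flags that you do not: the generators $\tilde u$ of $V_I$ arise from the \v{C}ech hull construction and may have $-\infty$ entries, so the argument of Develin--Yu requires a mild adaptation rather than a verbatim transfer.
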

\begin{proof}
\added{The following is an adaptation of the proof of~\cite[Theorem 3.2]{DevelinYu:2007}.
By Proposition~\ref{prop:acyclic}, it suffices to check that for each ${\bf b} = (b_1, \dots, b_d) \in \ZZ^d_{\geq 0}$, the bounded faces of $M$ with labels dividing ${\bf b}$ form an acyclic complex.
Let the coordinates on $\puiseux{\RR}{t}^d$ be given by $z_1, \dots, z_d$ and consider the linear function on $\puiseux{\RR}{t}^d$ given by
\[
f(x) = t^{-b_1}z_1 + \dots + t^{-b_d}z_d \, .
\]
Then for a vertex of $M$, $f(z)$ has positive leading exponent if and only if the corresponding generator does not divide $x^{\bf b}$.
Thus the bounded faces of $M$ in the halfspace given by $f(z) \leq t^{\frac{1}{2}}$ are precisely those in the subcomplex $X_{\leq {\bf b}}$.
Consider the polytope obtained as the intersection $M \cap \SetOf{p}{f(p) \leq t^{\frac{1}{2}}}$.
The complex $X_{\leq {\bf b}}$ is precisely the subcomplex of faces disjoint from the face $M \cap \SetOf{p}{f(p) = t^{\frac{1}{2}}}$, and is therefore acyclic by~\cite[Lemma 4.18]{MillerSturmfels:2005}.}
\end{proof}

\begin{remark}
\added{While Proposition~\ref{prop:lift+resolution} is based on a similar statement for the hull complex, $\cP_I$ may not be a lift of $\monomial{V_I}$ as it does not modify the generators by replacing $0$ by $-\infty$.}
The replacement of $0$ by $-\infty$ does not play a big role from the point of view of the resolution.
The monomial ideals $I$ and $x_1\cdots x_d\cdot I$ have isomorphic resolutions, which equates to shifting the monomial tropical polyhedron in the direction $(1,\dots,1)$ to replace $0$ by a positive number.
\end{remark}

\subsection{LCM-lattice}

The \emph{LCM-lattice} is a natural sublattice of the $\max$-lattice which was introduced in~\cite{GasharovPeevaWelker:1999} for monomial ideals, defined as follows.
Let $I$ be a monomial ideal minimally generated by monomials ${\bf x}^{v^{(1)}},\dots,{\bf x}^{v^{(n)}}$.
Its LCM-lattice $L_I$ is the lattice of elements
\[
\added{{\bf x}^{J}} = \lcm\SetOf{{\bf x}^{v^{(i)}}}{i \in J} \ , \ J \subseteq [n]
\]
ordered by divisibility.
The unique maximal element is $\lcm({\bf x}^{v^{(1)}},\dots,{\bf x}^{v^{(n)}})$ and set $ \hat{0} = \lcm(\emptyset) = 1$ to be the unique minimal element.
\added{As with the $\max$-lattice, the LCM-lattice is a lattice as each pair of elements ${\bf x}^{J}, {\bf x}^{K}$ have a unique meet and join defined by
\begin{align*}
{\bf x}^{J} \vee {\bf x}^{K} &= \lcm\SetOf{{\bf x}^{v^{(i)}}}{i \in J\cup K} = {\bf x}^{J \cup K} \, , \\
{\bf x}^{J} \wedge {\bf x}^{K} &= \lcm\SetOf{{\bf x}^{v^{(i)}}}{i \in J\cap K} = {\bf x}^{J \cap K} \, .
\end{align*}}

The LCM-lattice is a powerful invariant: it determines a minimal free resolution \cite{GasharovPeevaWelker:1999} and the Stanley depth of $I$ \cite{IchimKatthanMoyanoFernandez:2017}.
The result of particular note to us is that it encodes the Betti numbers of $I$ via its homology.
We briefly recall some basics of lattice homology at the end of this subsection.
\begin{theorem}\cite[Theorem 2.1]{GasharovPeevaWelker:1999} \label{thm:lcm+betti}
For $i \geq 1$, we have
\[
\beta_{i,u}(S/I) =
\begin{cases}
\dim\tilde{H}_{i-2}((\hat{0},{\bf x}^u)_{L_I};\KK) \ & \ {\bf x}^u \in L_I \\
0 \ & \ {\bf x}^u \notin L_I
\end{cases}
\enspace .
\]
\end{theorem}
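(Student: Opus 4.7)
The plan is to compute the Betti numbers through the (generally non-minimal) Taylor resolution and then identify the resulting multigraded chain complex with one whose homology is controlled by open intervals in the LCM-lattice. Recall that the Taylor complex $T_\bullet$ is a multigraded free resolution of $S/I$ with $T_k$ freely generated by symbols $e_J$ for $J\subseteq\{1,\dots,n\}$, $|J|=k$, each carrying multidegree $\lcm(J)$; its differential sends $e_J$ to an alternating sum of $\frac{\lcm(J)}{\lcm(J\setminus j)}\cdot e_{J\setminus j}$. Since Betti numbers compute $\mathrm{Tor}$, one has $\beta_{i,u}(S/I)=\dim_{\KK}H_i((T_\bullet\otimes_S\KK)_u)$, and the entire argument amounts to analysing the right-hand side.

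Next, I would extract the multidegree-$u$ strand. Tensoring with $\KK=S/\mathfrak{m}$ annihilates every nonunit differential coefficient, so only the summands with $\lcm(J\setminus j)=\lcm(J)$ survive, and restricting to multidegree $u$ retains exactly those $e_J$ with $\lcm(J)={\bf x}^u$. If ${\bf x}^u\notin L_I$ then this strand is empty and the Betti number vanishes, settling the second case at once. Otherwise, set $V_u=\{j:m_j\mid{\bf x}^u\}$, let $\Sigma_u$ be the full simplex on $V_u$, and let $\Delta_u\subset\Sigma_u$ be the subcomplex of faces $J$ with $\lcm(J)$ a strict divisor of ${\bf x}^u$. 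One can then recognise the surviving strand as the relative simplicial chain complex $C_\bullet(\Sigma_u,\Delta_u;\KK)$, and contractibility of $\Sigma_u$ together with the long exact sequence of the pair yields
\[
H_i\bigl((T_\bullet\otimes\KK)_u\bigr)\ \cong\ \tilde H_{i-1}(\Delta_u;\KK).
\]

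The last and main step is the homotopy equivalence $\Delta_u\simeq\Delta\bigl((\hat 0,{\bf x}^u)_{L_I}\bigr)$, which produces the one-step shift $i-1\leadsto i-2$ required by the statement. I would obtain this through a Nerve Lemma argument applied to the cover of $\Delta_u$ by the subsimplices $\Delta_u^{(w)}=\{J\subseteq V_u:\lcm(J)\mid w\}$ as $w$ ranges over $(\hat 0,{\bf x}^u)_{L_I}$: each $\Delta_u^{(w)}$ is a simplex, hence contractible, and every nonempty intersection $\Delta_u^{(w_1)}\cap\Delta_u^{(w_2)}$ is again of the form $\Delta_u^{(w')}$ where $w'=\lcm(m_j:j\in V_{w_1}\cap V_{w_2})\in[\hat 0,{\bf x}^u)_{L_I}$. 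The nerve of this good cover is, by construction, the order complex of $(\hat 0,{\bf x}^u)_{L_I}$. An equivalent route is to apply Quillen's Fiber Theorem to the order-preserving map $J\mapsto\lcm(J)$ from the face poset of $\Delta_u$ to $(\hat 0,{\bf x}^u)_{L_I}$ and verify that all fibres have contractible order complexes.

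The main obstacle is precisely this last identification: one has to check that the proposed cover really is good (in particular that intersections stay indexed by elements strictly above $\hat 0$ or else vanish in the correct homological sense) and that its nerve matches the order complex on the nose. This is the step where the multiplicative structure of LCMs gets translated into the poset structure of $L_I$; once it is in place, the formula $\beta_{i,u}(S/I)=\dim\tilde H_{i-2}((\hat 0,{\bf x}^u)_{L_I};\KK)$ follows by combining it with the earlier identification of $H_i((T_\bullet\otimes\KK)_u)$ with $\tilde H_{i-1}(\Delta_u;\KK)$.
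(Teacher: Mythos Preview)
The paper does not prove this theorem; it is quoted as \cite[Theorem~2.1]{GasharovPeevaWelker:1999} and used as a black box. So there is no paper proof to compare against. Your outline is the standard argument (Taylor resolution plus a homotopy identification), and it is essentially right, but two points in your write-up need correction.

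\textbf{Degree bookkeeping.} A subset $J$ with $|J|=i$ sits in homological degree $i$ of the Taylor complex but is an $(i-1)$-simplex, so the identification with the relative chain complex already carries a shift: $(T_i\otimes\KK)_u\cong C_{i-1}(\Sigma_u,\Delta_u;\KK)$. The long exact sequence of the pair, using contractibility of $\Sigma_u$, then gives
\[
H_i\bigl((T_\bullet\otimes\KK)_u\bigr)\ \cong\ H_{i-1}(\Sigma_u,\Delta_u;\KK)\ \cong\ \tilde H_{i-2}(\Delta_u;\KK).
\]
Both shifts happen here. The later homotopy equivalence $\Delta_u\simeq\Delta\bigl((\hat 0,{\bf x}^u)_{L_I}\bigr)$ preserves degrees; it cannot ``produce the one-step shift $i-1\leadsto i-2$'' as you claim.

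\textbf{The nerve is not the order complex.} Your cover $\{\Delta_u^{(w)}:w\in(\hat 0,{\bf x}^u)\}$ is good, but its nerve is \emph{not} the order complex on the nose: two incomparable elements $w_1,w_2$ give a nerve edge whenever some generator divides both, which is not a chain. (Concretely, with three generators $a,b,c$ and pairwise lcms $d,e,f$ all strictly below $\hat 1$, the nerve contains the edge $\{d,e\}$ because $a$ lies in both $\Delta_u^{(d)}$ and $\Delta_u^{(e)}$.) The nerve is still homotopy equivalent to the order complex, but this needs an argument; it is not ``by construction''. Your alternative route via Quillen's Fiber Theorem avoids the issue cleanly: the poset map $J\mapsto\lcm(J)$ from the face poset of $\Delta_u$ to $(\hat 0,{\bf x}^u)_{L_I}$ has each lower fibre $\{J:\lcm(J)\mid w\}$ equal to the face poset of a full simplex, hence contractible, and Theorem~A gives the desired homotopy equivalence directly. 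Equivalently, one can invoke the crosscut theorem (as the paper itself recalls in Theorem~\ref{thm:crosscut+homology}) for the atoms of $[\hat 0,{\bf x}^u]$, which recovers $\Delta_u$ as the crosscut complex.
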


The LCM-lattice of $I$ is a sublattice of the $\max$-lattice of $\monomial{V_I}$ via the embedding
\[
\lcm\SetOf{{\bf x}^{v^{(i)}}}{i \in J} \mapsto \max\SetOf{\tilde{v}^{(i)}}{i \in J} \enspace .
\]
Furthermore, the only elements of the $\max$-lattice that are \added{not} elements of the LCM-lattice are those with $\infty$ in some coordinate.
As a corollary to Theorem \ref{thm:lcm+betti}, we get that the $\max$-lattice of $\monomial{V_I}$ determines the Betti numbers of $I$ by restricting to elements without $\infty$ in any coordinate.
As a result, we define the \emph{LCM-lattice of a monomial tropical polyhedron} to be the induced sublattice of the $\max$-lattice consisting of elements without $\infty$ in some coordinate.

We end this subsection recalling some concepts from poset homology necessary for Section~\ref{subsec:properties+facet+complex}. 
Let $P$ be a poset with minimal and maximal elements $\hat{0}, \hat{1}$.
The \emph{order complex} $\Delta(P)$ is the abstract simplicial complex whose faces are chains in $P \setminus \{\hat{0},\hat{1}\}$.
The homology groups of $P$ are given by the simplicial homology groups of $\Delta(P)$, i.e. $\tilde{H}_i(P) = \tilde{H}_i(\Delta(P))$.
This notion extends for open intervals $(p,q)_P = \SetOf{r \in P}{p<r<q}$ of $P$.

Let $L$ be a lattice, we have more homological tools available than for arbitrary posets.
A crosscut of $L$ is a subset of elements $C \subset L$ such that
\begin{enumerate}
	\item $\hat{0}, \hat{1} \notin C$,
	\item If $p,q \in C$ then $p \nless q$ and $q \nless p$,
	\item any finite chain in $L$ can be extended to a chain which contains an element of $C$.
\end{enumerate}
A notable example is that the set of atoms of $L$ form a crosscut.
We say a finite subset $\{p_1, \dots, p_n\} \subset C$ \emph{spans} if $p_1 \wedge \dots \wedge p_n = \hat{0}$ and $p_1 \vee \dots \vee p_n = \hat{1}$.
Let $\Delta(C)$ be the abstract simplicial complex whose faces are the non-spanning subsets of $C$.
We again define the homology of a crosscut by $\tilde{H}_i(C) = \tilde{H}_i(\Delta(C))$.

A key observation is that the homology of a crosscut is isomorphic to the homology of the lattice, and that homology is invariant under the choice of crosscut.
\begin{theorem} [Theorem 3.1 \cite{Folkman:1966}] \label{thm:crosscut+homology}
Let $L$ be a lattice and $C$ a crosscut of $L$.
Then $\tilde{H}_i(L) \cong \tilde{H}_i(C)$ for all $i \in \ZZ$.
\end{theorem}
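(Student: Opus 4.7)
The plan is to prove the stronger homotopy-theoretic statement that $\Delta(L\setminus\{\hat 0,\hat 1\})$ and $\Delta(C)$ are homotopy equivalent by applying the Nerve Lemma to a natural cover indexed by $C$. For each $c\in C$, let $N_c = \SetOf{x\in L\setminus\{\hat 0,\hat 1\}}{x \text{ is comparable to } c}$ and let $X_c = \Delta(N_c)$, viewed as a subcomplex of $\Delta(L\setminus\{\hat 0,\hat 1\})$. The whole argument then rests on establishing three properties of this family.

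First, I verify that the $X_c$ cover $\Delta(L\setminus\{\hat 0,\hat 1\})$. Any chain in $L\setminus\{\hat 0,\hat 1\}$ extends to a maximal chain of $L$, and by crosscut condition (3) this maximal chain meets $C$ at some element $c$; then $c$ is comparable to every element of the original chain, so the chain lies in $X_c$. Second, each $X_c$ is contractible: for any chain in $N_c$, the elements split into an initial segment $\leq c$ and a final segment $\geq c$ (since a chain is totally ordered), so $c$ can be inserted in the chain; thus $X_c$ is a cone with apex $c$.

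The crux of the argument is step three: for $\sigma\subseteq C$ the intersection $\bigcap_{c\in\sigma}X_c$ is empty precisely when $\sigma$ spans, and contractible otherwise. The key observation is that if $|\sigma|\geq 2$ and $x\in L\setminus\{\hat 0,\hat 1\}$ is comparable to every $c\in\sigma$, then (using that $C$ is an antichain) either $x\leq c$ for all $c\in\sigma$ or $x\geq c$ for all $c\in\sigma$; otherwise one finds $c_1\leq x\leq c_2$ with $c_1\neq c_2$ in $\sigma$, contradicting the antichain property. Consequently the intersection is contained in $\SetOf{x}{x\leq\bigwedge\sigma}\cup\SetOf{x}{x\geq\bigvee\sigma}$, which is empty in $L\setminus\{\hat 0,\hat 1\}$ exactly when $\bigwedge\sigma=\hat 0$ and $\bigvee\sigma=\hat 1$, i.e., when $\sigma$ spans. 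If $\sigma$ is non-spanning, pick a witness: either $\bigvee\sigma\neq\hat 1$ or $\bigwedge\sigma\neq\hat 0$; this witness lies in the intersection and is comparable to every element of the intersection, so the order complex is a cone on that witness and hence contractible.

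Fourth, apply the Nerve Lemma (Björner's version for good covers of a simplicial complex by subcomplexes with contractible nonempty intersections). By step three, the nerve of $\{X_c\}_{c\in C}$ has vertex set $C$ and contains $\sigma$ as a simplex exactly when $\sigma$ is non-spanning, so the nerve coincides with $\Delta(C)$. This yields $\Delta(L\setminus\{\hat 0,\hat 1\})\simeq\Delta(C)$, and passing to homology gives the theorem. The main obstacle is the case analysis in step three: one must handle $|\sigma|=1$ (trivially non-spanning, since $C\subseteq L\setminus\{\hat 0,\hat 1\}$) separately from $|\sigma|\geq 2$, and verify that the chosen cone apex $\bigwedge\sigma$ or $\bigvee\sigma$ actually lies in $L\setminus\{\hat 0,\hat 1\}$; this is exactly what non-spanning guarantees, so the spanning condition is what precisely controls emptiness versus contractibility of the intersections.
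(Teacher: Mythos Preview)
The paper does not supply its own proof of this theorem; it is quoted verbatim as a classical result with a citation to Folkman's 1966 paper. So there is no in-paper argument to compare against.

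Your proof is correct. The cover $\{X_c\}_{c\in C}$ by cones on the crosscut elements, the antichain dichotomy forcing $\bigcap_{c\in\sigma}N_c\subseteq(\hat 0,\bigwedge\sigma]\cup[\bigvee\sigma,\hat 1)$ for $|\sigma|\ge 2$, and the identification of the nerve with the non-spanning complex $\Delta(C)$ all go through as you describe. The only point worth tightening is that $\bigcap_{c\in\sigma}X_c=\Delta\bigl(\bigcap_{c\in\sigma}N_c\bigr)$ should be stated explicitly (intersections of full subcomplexes are full on the intersection of vertex sets), and that when $\bigvee\sigma\neq\hat 1$ one should note $\bigvee\sigma\neq\hat 0$ as well (clear since $\sigma\neq\emptyset$ and $C\cap\{\hat 0\}=\emptyset$), so the chosen apex really lies in $L\setminus\{\hat 0,\hat 1\}$.

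For context: your argument is the modern homotopy-theoretic proof, essentially Bj\"orner's treatment via the Nerve Lemma (the very result the paper invokes elsewhere as \cite[Theorem~10.7]{Bjoerner:1995}), which upgrades Folkman's original homology isomorphism to a homotopy equivalence. Folkman's 1966 proof proceeds differently, by direct chain-level computations and an acyclic-carrier/spectral-sequence style argument rather than the Nerve Lemma. Your route is cleaner and yields the stronger conclusion $\Delta(L\setminus\{\hat 0,\hat 1\})\simeq\Delta(C)$, at the cost of importing the Nerve Lemma as a black box.
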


\subsection{Syzygy points and the Betti poset} \label{subsec:syzygy+betti+poset}
The Betti numbers of an ideal are encoded by the Koszul complexes of $I$ in various degrees.
These complexes have geometric formulation that can be naturally recovered from monomial tropical polyhedra.

\begin{definition}[\cite{MillerSturmfels:2005, Kappes:2006}]\label{def:syzygy}
The \emph{Koszul complex} of a point $p \in \added{\monomial{V}}$ is the simplicial complex
\[
\Delta_{p} = \SetOf{J \subseteq [d]}{\added{\exists q \in \monomial{V} \text{ such that } q \leq p \, , \, q_j < p_j \, \forall j \in J }} \enspace .
\]
Then $p$ is a \emph{syzygy point} if $\Delta_p$ has non-trivial homology.
The syzygy points are equipped with the standard partial order on $\TTclosed^d$. 
\end{definition}
\added{Note that one could extend Definition~\ref{def:syzygy} to $\closedmonomial{V}$.
However, for any point $p$ with $p_i = +\infty$, the Koszul complex $\Delta_p$ is a cone, i.e. $F \in \Delta_p$ implies that $i \in F$.
As cones are contractible, these points are never syzygy points.
As a result, we only consider points of $\monomial{V}$.}

\added{We first note that syzygy points fit neatly into our hierarchy of posets on $\closedmonomial{V}$.
\begin{lemma}[{\cite{Kappes:2006}}] \label{lem:syzygy+points+CP+points}
  Syzygy points are a strict subset of characteristic points.
\end{lemma}
\begin{proof}
  By~\cite[Lemma 5.12]{Kappes:2006}, a point \added{$p \in \monomial{V}$ (with $p_i \neq +\infty$)} is a characteristic point if and only if $\Delta_p$ is not a cone.
  As a cone is contractible, the claim follows.
  \added{Note that this is strict as $\closedmonomial{V} \setminus \monomial{V}$ always contains a facet-apex, and facet-apices are characteristic points.}
\end{proof}

\added{Note that even if we restrict the CP-order to $\monomial{V}$, the syzygy points may still be a strict subset.}
In dimension three, a case check demonstrates that all characteristic points are also syzygy points, but for dimension four and higher there are examples of characteristic points $p$ whose complex $\Delta_p$ has trivial homology\added{; see~\cite{Kappes:2006} for an explicit example}.}

Given a monomial tropical polyhedron $\monomial{V_I}$ and $p \in \ZZ^d$, the complex $\Delta_p$ is the Koszul complex of $I$ in degree $p$ as defined in \cite[Definition 1.33]{MillerSturmfels:2005}.
Moreover, \cite[Theorem 1.34]{MillerSturmfels:2005} states that these points encode the Betti numbers of $I$ via the equation
\begin{equation} \label{eq:syzygy+point+homology}
\beta_{i,p}(I) = \beta_{i+1,p}(S/I) = \dim\tilde{H}_{i-1}(\Delta_p;\KK) \enspace .
\end{equation}
Recently, Koszul complexes have played a key role in the construction of a canonical minimal free resolution for arbitrary monomial ideals \cite{EagonMillerOrdog:2019}.
These results show that they play a fundamental role in the homology of monomial ideals.
The following proposition shows they are encoded by the combinatorial structure of monomial tropical polyhedra, in particular via covector graphs.

\begin{proposition}
The Koszul complex of a point $p$ can be determined from its covector graph $G_p$.
Explicitly, it is the simplicial complex
\[
\Delta_{p} = \SetOf{J \subseteq [d]}{\neighbour_p(0) \nsubseteq \neighbour_p(J)} \enspace .
\]
\end{proposition}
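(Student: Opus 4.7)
The plan is to unpack the defining condition of $\Delta_p$ until it translates directly into an incidence statement on $G_p$, going through an intermediate description in terms of the generators in $V$. I would start by noting that, since $\closedmonomial{V}$ consists of those points in $\TTclosed^d$ that componentwise dominate some $v \in V$ (with the usual convention $x \leq \infty$), membership of $q := p - \epsilon \sum_{i \in J} e_i$ in $\closedmonomial{V}$ reduces to the existence of a generator $v \in V$ with $v \leq q$. For sufficiently small $\epsilon > 0$, this is equivalent to the following: there exists $v \in V$ with $v \leq p$ overall and $v_i < p_i$ strictly for every $i \in J$ with $p_i$ finite.

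Next I would translate these two generator-conditions into $G_p$. By the definition of $S_0(v)$, the condition $v \leq p$ is exactly $v \in \neighbour_p(0)$. For $v \in V$ already satisfying $v \leq p$, inspecting the inequalities that cut out $S_i(v)$ shows, when $p_i$ is finite, that $v \in \neighbour_p(i)$ if and only if $v_i = p_i$; when $p_i = \infty$, no finite-coordinate generator can lie in $\neighbour_p(i)$. Consequently the family of strict inequalities $v_i < p_i$ for all $i \in J$ collapses to $v \notin \neighbour_p(i)$ for all $i \in J$, that is, to $v \in \neighbour_p(0) \setminus \neighbour_p(J)$.

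The final step is to reconcile with the stated form $\neighbour_p(0) \nsubseteq \neighbour_p(J)$, which also permits a ray $e^{(i)}$ with $p_i = \infty$ to witness the non-inclusion. Using the extended definition $G_p = G_{\tilde p} \cup \SetOf{(e^{(i)},0)}{p_i = \infty}$ together with the explicit form of $S_j(e^{(i)})$ (which forces $p_j = -\infty$ for $j \neq i$ and is automatic for $j = i$), one determines precisely when a ray $e^{(i)}$ lies in $\neighbour_p(0) \setminus \neighbour_p(J)$, and matches this to the condition that the corresponding boundary-at-infinity stratum of $\closedmonomial{V}$ still contains $p - \epsilon \sum_{j \in J} e_j$ for small $\epsilon$. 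I expect this bookkeeping of the boundary strata to be the main technical hurdle, since one must verify in each case that generator- and ray-witnesses record exactly the same movement condition that defines $\Delta_p$.
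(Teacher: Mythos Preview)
Your approach is essentially the paper's: reduce membership of $p - \epsilon \sum_{i \in J} e_i$ in $\closedmonomial{V}$ to the existence of a generator $v \in V$ with $v \le p$ and $v_j < p_j$ for $j \in J$, then translate those two conditions into $v \in \neighbour_p(0)$ and $v \notin \neighbour_p(j)$ for all $j \in J$. The paper's proof consists of exactly those two sentences and stops there; your third step, reconciling generator--witnesses with possible ray--witnesses $e^{(i)} \in \neighbour_p(0)$, is additional care that the paper simply omits (so your instinct that this is where any subtlety hides is correct, but the paper does not engage with it).
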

\begin{proof}
\added{The condition that $J \subseteq [d]$ is a face of the Koszul complex $\Delta_p$ is equivalent to there existing} some $v \in V$ such that $p \geq v$ and $p_j > v_j$ for all $j \in J$.
These two conditions are equivalent to the covector conditions $v \in \neighbour_p(0)$ and $v \notin \neighbour_p(j)$ for all $j \in J$.
\end{proof}

A related homological construction is the \emph{Betti poset}, introduced by Clark and Mapes \cite{ClarkMapes:2014a,ClarkMapes:2014b} as a distillation of a poset to its homologically nontrivial part.
\begin{definition}
The \emph{Betti poset} of $P$ is the induced subposet of homologically contributing elements
\[
B(P) = \SetOf{p \in P}{\tilde{H}_i((\hat{0},p)_P; \KK) \neq 0 \text{ for some } i} \enspace .
\]
\end{definition}
Let $I$ be a monomial ideal and $L_I$ its LCM-lattice.
Theorem \ref{thm:lcm+betti} shows $L_I$ totally determines the Betti numbers of $I$, combining with \cite[Theorem 1.4]{ClarkMapes:2014b} implies we can restrict to the Betti poset of $L_I$
\begin{equation} \label{eq:betti+poset+homology}
\beta_{i,p}(S/I) = \dim_{\KK}\tilde{H}_{i-2}((\hat{0},p)_{B(L_I)};\KK) \enspace ,
\end{equation}
and that $B(L_I)$ is the minimal poset with this property.
As a result, when we refer to the Betti poset, we consider only the Betti poset of the LCM-lattice unless explicitly stated.

Equations \eqref{eq:syzygy+point+homology} and \eqref{eq:betti+poset+homology} show two distinct methods to compute the Betti numbers of a monomial ideal.
The following proposition shows this equivalence holds for an arbitrary monomial tropical polyhedron.

\begin{proposition} \label{prop:syzygy+Betti+poset}
Fix a monomial tropical polyhedron. The poset of its syzygy points with the standard partial order is equal to its Betti poset.
\end{proposition}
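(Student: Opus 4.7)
The plan is to prove the two posets coincide as subsets of $\TTclosed^d$ (both carry the induced partial order), by relating the Koszul complex $\Delta_p$ at a point to the open interval below $p$ in the LCM-lattice $L$ through the Nerve Lemma and Folkman's crosscut theorem (Theorem~\ref{thm:crosscut+homology}).

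First I would show that every syzygy point lies in $L$. Suppose there is some $i \in [d]$ with either $p_i = \infty$ or with $v_i < p_i$ for all vertices $v \le p$. Then whenever $J \in \Delta_p$ is witnessed by a vertex $v$, the same $v$ also witnesses $J \cup \{i\}$, so $\Delta_p$ is a cone with apex $\{i\}$ and has trivial reduced homology, contradicting the syzygy assumption. Hence for every $i$ there is a vertex $v \le p$ with $v_i = p_i$ and all coordinates of $p$ are finite, so $p = \max\{v \in V : v \le p\}$ is an element of $L$.

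Next, for $p \in L$, I would set $V_p := \{v \in V : v \le p,\ v \neq p\}$ and $F_v := \{j \in [d] : v_j < p_j\}$ for each $v \in V_p$, noting that $\{F_v\}_{v \in V_p}$ is exactly the collection of facets of $\Delta_p$. Applied to this facet cover (a good cover, since intersections of facets are faces of a simplex), the Nerve Lemma gives $\Delta_p \simeq \Sigma$, where $\Sigma$ is the nerve of $\{F_v\}$. Independently, $V_p$ forms the crosscut of atoms strictly below $p$ in the sublattice $[\hat{0},p]_L$; a subset $C \subseteq V_p$ spans iff $\max_{v \in C} v = p$, equivalently iff $\bigcap_{v \in C} F_v = \emptyset$, so the non-spanning complex of this crosscut coincides with $\Sigma$. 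Theorem~\ref{thm:crosscut+homology} then yields $\tilde{H}_k(\Sigma) \cong \tilde{H}_k((\hat{0},p)_L)$, and combining with the Nerve Lemma gives $\tilde{H}_k(\Delta_p) \cong \tilde{H}_k((\hat{0},p)_L)$ for every $k$.

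Combining the two steps, $p$ is a syzygy point iff $\tilde{H}_*(\Delta_p) \neq 0$ iff $\tilde{H}_*((\hat{0},p)_L) \neq 0$ iff $p \in B(L)$, and the partial orders agree. The main obstacle is the careful bookkeeping for the degenerate cases---when $p$ itself is a vertex of $V$, so that $V_p = \emptyset$ and both complexes are trivially empty, and when several vertices coincide in some coordinate---together with checking that the Nerve Lemma indeed applies in full generality to the facet covering of the (possibly highly non-generic) Koszul complex, rather than only up to homology.
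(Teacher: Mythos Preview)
Your proposal is correct and follows essentially the same route as the paper: both combine Folkman's crosscut theorem on the atoms below $p$ in the LCM-lattice with the Nerve Lemma to identify the homology of $\Delta_p$ with that of $(\hat{0},p)_L$. The only cosmetic difference is the direction of the nerve: the paper covers the crosscut complex $\Sigma$ (on the vertex set $\{v\in V:v\le p\}$) by the simplices $\sigma_j=\{I:\max_{i\in I} v^{(i)}_j<p_j\}$ indexed by $j\in[d]$ and identifies the nerve as $\Delta_p$, whereas you cover $\Delta_p$ by the faces $F_v$ indexed by $v\in V_p$ and identify the nerve as $\Sigma$. These are dual readings of the same bipartite incidence between coordinates and vertices, so the arguments are interchangeable.

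One point where your write-up is actually more complete than the paper's: you explicitly verify that any syzygy point must lie in the LCM-lattice (via the cone argument when some coordinate of $p$ is $\infty$ or not attained by a vertex below it). The paper fixes $p\in L$ from the outset and never returns to rule out syzygy points outside $L$, so your first paragraph fills a small gap. Two minor quibbles: the sets $F_v$ are a cover of $\Delta_p$ by \emph{faces}, not literally the facets (some $F_v$ may be contained in others), but this is harmless for the Nerve Lemma; and your worry about the degenerate case $p\in V$ is unfounded, since then $\Delta_p=\{\emptyset\}$ and $(\hat{0},p)_L=\emptyset$, both with $\tilde H_{-1}=\KK$, so the identification still holds.
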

\begin{proof}
Let $L$ be the LCM-lattice of $\monomial{V}$.
\added{We first note that every syzygy point is contained in $L$, as the syzygy points are a subset of the CP-order, and therefore also the $\max$-lattice of $\monomial{V}$.
As syzygy points have no coordinates equal to $+\infty$, the claim follows.}

Fix some $p \in L$; we show that $\Delta_p$ and $(\hat{0},p)_L$ have the same homology.
As both posets have the same partial order, the claim of the proposition follows from this.

Let $v^{(1)},\dots,v^{(s)} \subseteq V$ be the set of vertices such that $p \geq v^{(i)}$.
These are the set of atoms of the interval $(\hat{0},p)_L$, and therefore form a crosscut.
By Theorem \ref{thm:crosscut+homology}, the homology of $(\hat{0},p)_L$ is equivalent to the homology of the simplicial complex
\begin{align*}
\Sigma = \SetOf{I \subseteq [s]}{\max_{i \in I}\left(v^{(i)}\right) < p} \enspace .
\end{align*}

For each $j \in [d]$, define the simplicial complex
\[
\sigma_j = \SetOf{I \subseteq [s]}{\max_{i \in I}\left(v_j^{(i)}\right) < p_j} \enspace .
\]
Note that each $\sigma_j$ is a simplex and is a face of $\Sigma$.
Furthermore, for any face $I \subseteq \Sigma$, there exists some $j \in [d]$ such that $\max_{i \in I}\left(v_j^{(i)}\right) < p_j$.
Therefore $\Sigma = \bigcup_{j \in [d]} \sigma_j$ and any intersection $\bigcap_{j \in J} \sigma_j$ is contractible or empty.
Let
\[
N = \SetOf{J \subseteq [d]}{\bigcap_{j \in J} \sigma_j \neq \emptyset}
\]
be the nerve of $\Sigma$.
By the Nerve theorem~\cite[Theorem 10.7]{Bjoerner:1995}, we obtain that $\Sigma$ and $N$ are homotopy equivalent.
However, the intersection $\bigcap_{j \in J} \sigma_j$ is non-empty if and only if there exists some $v \in V$ such that $p \geq v$ and $p_j > v_j$ for all $j \in J$.
This is equivalent to $p - \sum_{j \in J} \varepsilon e_j \in \monomial{V}$, and therefore $\Delta_p$ is the nerve of $\Sigma$.
\end{proof}

\added{Combining this result with Lemma~\ref{lem:syzygy+points+CP+points}, we deduce that the Betti poset of $\monomial{V}$ is a (strict) subposet of the CP-order.
This allows it to fit neatly into our family of posets from Section~\ref{sec:face+posets}, as other than the Scarf poset, it is a subposet of them all.}

\subsection{Genericity} \label{subsec:genericity}

For monomial ideals and monomial tropical polyhedra there are several notions of genericity, as discussed in~\cite{MillerSturmfels:2005} and~\cite{AllamigeonBenchimolGaubertJoswig:2015}, respectively. 

\begin{definition}
A monomial ideal $\langle m_1,\dots,m_n\rangle$ is \emph{generic} if whenever two distinct minimal generators $m_i$ and $m_j$ have the same nonzero degree in some variable, a third generator $m_k$ divides $\lcm(m_i,m_j)x_i^{-1}$ for all variables $x_i$ dividing $\lcm(m_i,m_j)$.
\end{definition}

This is equivalent to the notion of genericity for monomial tropical polyhedra given at the end of Section~\ref{subsec:scarf+poset}.
There are two stronger notions of genericity arising from commutative algebra and tropical convexity.
A monomial tropical polyhedron is \emph{strongly generic} if $u_i \neq v_i$ for all $u, v \in V, i \in [d]$.
There is an even stronger notion of \emph{tropically generic} where the generators $v_1,\dots,v_n \in \TTmax^d$ are in tropically generic position i.e. the matrix of (homogenised) generators has no tropically singular minors; see~\cite[\S 2.1.2]{AllamigeonBenchimolGaubertJoswig:2015} for details. 
This notion of genericity is too refined for monomial tropical polyhedra, as discussed in Remark \ref{rem:max+lattice+covector+comparison}: the combinatorics of ideals and the $\max$-lattice are purely determined by the orderings of coordinates, while the combinatorics of covectors are more refined.

One can study properties of arbitrary monomial ideals via the generic monomial ideals arising via \emph{genericity by deformation} \cite[Section 6.3]{MillerSturmfels:2005}.
The same procedure extends very naturally to monomial tropical polyhedra.
Let $\monomial{V}$ be a monomial tropical polyhedron where $V = \{v^{(1)},\dots, v^{(n)}\}$.
\begin{definition} \label{def:deformation+monomial+polyhedron}
A \emph{deformation} of $\monomial{V}$ is the monomial tropical polyhedron arising from a choice of vectors $\varepsilon^{(j)} \in \RR^d$ for $j \in [n]$ such that
\begin{equation} \label{eq:deformation}
v^{(j)}_i < v^{(k)}_i \Rightarrow v^{(j)}_i + \varepsilon^{(j)}_i < v^{(k)}_i + \varepsilon^{(k)}_i \enspace .
\end{equation}
Define $\monomial{V_{\varepsilon}}$ to be the monomial tropical polyhedron generated by 
\[
V_{\varepsilon} = \SetOf{v^{(j)} + \varepsilon^{(j)}}{i \in [n]} \enspace .
\]
\end{definition}
We say a face poset $P$ of a monomial tropical polyhedron $\monomial{V}$ is \emph{invariant under deformation} if the respective face poset of any deformation $\monomial{V_{\varepsilon}}$ is isomorphic to $P$.
Using this notion, we get a nice connection between notions of genericity and deformations of face posets.
\begin{proposition}
A monomial tropical polyhedron is
\begin{itemize}
\item generic \added{if and only if} its Scarf poset is invariant under deformation,
\item strongly generic \added{if and only if} its max-lattice is invariant under deformation,
\item tropically generic \added{if and only if} its pseudovertex poset is invariant under sufficiently small deformations.
\end{itemize}
\end{proposition}
\begin{proof}
The first and second equivalence are given by \cite[Theorem 6.26]{MillerSturmfels:2005} and Corollary~\ref{coro:max+lattice+dependence+order} respectively.
\added{For the third, \cite[Proposition 24]{DevelinSturmfels:2004} shows that a tropically generic monomial tropical polyhedron has its pseudovertices in bijection with maximal cells of a regular subdivision of the product of simplices $\Delta_{n-1} \times \Delta_{d-1}$.
Furthermore, this subdivision is given by weighting the vertices of $\Delta_{n-1} \times \Delta_{d-1}$ by the matrix $V$.
By \cite[Lemma 2.3.16]{triangulations}, this regular subdivision is invariant under sufficiently small perturbations of $V$; in particular, for any sufficiently small deformation $V_\varepsilon$.}
\end{proof}
Note that by Theorem~\ref{thm:generic+points+all+posets+coincide} we could replace Scarf poset with CP-order, max-min poset or vertex-facet lattice.
\begin{remark}
  While genericity by deformation was originally defined for monomial ideals, it is a more natural construction for monomial tropical polyhedra than monomial ideals for two reasons.
Firstly, as monomial ideals naturally have integer exponents, non-integer deformations must be defined formally as an ideal of a larger polynomial ring with real exponents.
Monomial tropical polyhedra have no such constraints as objects naturally living in $\TTmax^d$.
Secondly, condition \eqref{eq:deformation} is a simplified version of the equivalent condition for monomial ideals, as one does not need to differentiate between the cases where $u_i$ is zero and non-zero for monomial tropical polyhedra; $u_i = -\infty$ is automatically accounted for.
\end{remark}

\section{Facet complex} \label{sec:facet+complex}

Recall the definition of the vertex-incidence graph from Definition~\ref{def:vertex-facet-lattice}. We define the \emph{facet complex} as the abstract simplicial complex whose maximal simplices are the sets of vertices incident with a facet. 
Letting $\overline{V}$ be the set of vertices and rays of a monomial tropical polyhedron, we denote the corresponding facet complex by $\cF(V)$.

\subsection{Embedding in the facet complex} \label{subsec:embedding+facet+complex}
In the following we use the natural correspondence between the tropical inequality $\max(x_i - a_i) \geq 0$ and the apex $a$ of the corresponding halfspace.
Recall that an inequality is \emph{valid} if it is satisfied by all points of $\monomial{V}$.
\begin{lemma} \label{lem:incident+vertices+valid+inequalities}
  Let $S$ be a subset of $\overline{V}$ such that there is an apex $a \in \TTclosed^d$ which corresponds to a valid inequality for $\monomial{V}$ and which is incident with $S$ but not with $\overline{V} \setminus S$.
  Then $S$ is a simplex in the facet complex. 
\end{lemma}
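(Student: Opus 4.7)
The plan is to show that $S$ is contained in the set of vertices and rays incident with some facet-apex $a'$, which by the definition of $\cF(V)$ places $S$ inside a maximal simplex and so makes $S$ itself a simplex. I would obtain $a'$ by iteratively tightening $a$ using the characterisation of facet-apices in Proposition~\ref{prop:facetapex+characterisation}. Note that condition~(i) of that proposition is equivalent to validity of the inequality and is therefore already guaranteed for $a$.

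Starting from $b = a$, suppose $b$ is not yet a facet-apex. Then condition~(ii) of Proposition~\ref{prop:facetapex+characterisation} fails at some coordinate $i$ with $b_i \neq +\infty$: no vertex $v \in V$ satisfies $v_i = b_i$ together with $v_k < b_k$ for all $k \neq i$. I would raise $b_i$ to the largest value still preserving validity, namely
\[
b'_i \ = \ \min\bigl\{v_i : v \in V,\ v_k < b_k \text{ for all } k \neq i\bigr\},
\]
with the convention $\min \emptyset = +\infty$. By construction the new apex $b'$ is still valid, and now satisfies condition~(ii) at $i$; furthermore, raising one coordinate only weakens the strict inequalities $v_k < b_k$ appearing in condition~(ii) at other coordinates, so once (ii) holds at a coordinate, the property is stable under subsequent updates.

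The subtle point, and the place where the hypothesis on $S$ actually enters, is that $S$-incidence must be preserved by each update; this is the main obstacle and the step I expect to have to verify with the most care. For a vertex $v \in S \cap V$ with $v_i = b_i$, the failure of condition~(ii) at $i$ forbids $v_k < b_k$ for all $k \neq i$, so $v$ must be tight with $b$ at some other coordinate $j \neq i$, and this equality persists under the modification of $b_i$. For a ray $e^{(i')} \in S$ we already have $b_{i'} = +\infty$, a coordinate that is never modified by the procedure.

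Since each coordinate is tightened at most once and only takes values in the finite set $\{v_i : v \in V\} \cup \{+\infty\}$, the iteration terminates in at most $d$ steps at a facet-apex $a'$ whose incidence set contains $S$. The degenerate cases in which $a$ already lies on the boundary of $\TTclosed^d$ (i.e.\ has $-\infty$ coordinates) are handled analogously: the tightening procedure never lowers a coordinate, and non-redundant boundary apices already satisfy the facet-apex characterisation directly.
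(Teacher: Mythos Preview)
Your argument is correct. The iterative tightening procedure works: validity is preserved (any $v$ with $v_k < b_k$ for $k \neq i$ appears in the minimum defining $b'_i$, so $v_i \geq b'_i$), condition~(ii) is achieved at the tightened coordinate and is stable under further updates, and your verification of $S$-incidence is the key step --- if $v \in S$ is tight only at $i$, then the failure of~(ii) at $i$ forces another equality $v_k = b_k$ with $k \neq i$, which survives the update.

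The paper takes a different, much shorter route. Rather than building the facet-apex by hand, it invokes the duality of Theorem~\ref{thm:complementary-cones}: the set of valid apices is the complementary min-tropical polyhedron $\complementarymonomial{V}$, whose extreme generators are precisely the facet-apices, so any valid apex $a$ is automatically dominated by some facet-apex $b \geq a$. Incidence of $S$ with $b$ then follows in one line: for $v \in S \cap V$ one has $v \leq a \leq b$, and since $b$ is itself valid (no $w < b$), the inequality $v \leq b$ cannot be strict, giving some $j$ with $v_j = b_j$; for a ray $e^{(i)} \in S$, $a_i = \infty$ forces $b_i = \infty$. In particular the paper never needs to track $S$-incidence through an iteration --- once $b \geq a$ exists, incidence is automatic from validity.

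Your approach has the virtue of being elementary and self-contained: it does not appeal to the duality theorem and gives an explicit algorithm producing the facet-apex. The paper's approach is conceptually cleaner and exposes why the result is really a one-line consequence of the structure of $\complementarymonomial{V}$, at the cost of relying on that earlier machinery.
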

\begin{proof}
  An apex corresponds to a valid inequality exactly if there is no element $v$ in $V$ with $v < a$.
By the extremality of facet-apices and the duality in Theorem~\ref{thm:complementary-cones}, there is a facet-apex $b \in \TTclosed^{d}$ with $a \leq b$.
  By the definition of incidence and as $b$ is also a valid inequality, this implies that $b$ is incident with the elements in $S$.
\end{proof}
A similar statement holds for the set $\maxunit$ of rays, as the far-apex is incident with precisely them, and corresponds to the valid inequality \eqref{eq:far+face}.

For an arbitrary polytope \added{$P$} over $\RR$ or $\puiseux{\RR}{t}$, we can define its \emph{facet complex} \added{$\cF(P)$} to be the complex whose maximal simplicies are those vertex sets that form a facet.
This notion can be extended to a polyhedron by considering a projectively equivalent polytope. 

\added{Let $M \subset \puiseux{\RR}{t}_{\geq 0}^{d}$ be a lift of $\monomial{V}$, i.e.,
\[
M = \conv(X) + \cone(Y) \quad , \quad \val(X) = V \, , \, \val(Y) = \maxunit \, .
\]
All vectors in the preimage of $e^{(i)}$ are equivalent to the $i$-th unit vector of $\puiseux{\RR}{t}_{\geq 0}^d$
\[
y^{(i)}_k =  \begin{cases} 1 & i = k \\ 0 & i \neq k \end{cases} \, ,
\]
upto scaling by an element of $\puiseux{\RR}{t}_{\geq 0}$, and so $\cone(Y) = \puiseux{\RR}{t}_{\geq 0}^d$.
We make the additional restriction that $\val$ induces a bijection between the vertices of $M$ and $\monomial{V}$, i.e., $X = \{x^{(1)},\dots, x^{(n)}\}$ such that $\val(x^{(i)}) = v^{(i)}$.
By identifying elements of $X\cup Y$ with their images in $V \cup \maxunit$ via the valuation map, the facet complex $\cF(M)$ of $M$ is naturally on the same vertex set as $\cF(V)$.}
The following theorem shows the facet complex \added{$\cF(M)$} of $M$ is \added{naturally} a subcomplex of $\cF(V)$.

\begin{theorem} \label{thm:lift+subcomplex}
Let $M$ be a lift of $\monomial{V}$ \added{such that $\val$ induces a bijection between the vertices of $M$ and $\monomial{V}$.
Then $\cF(M)$ is a subcomplex of $\cF(V)$.}
\end{theorem}
\begin{proof}
  \added{Without loss of generality}, let $\{x^{(1)}, \dots, x^{(k)}\}$ be the vertices of a facet of $M$ \added{with rays $\{y^{(1)}, \dots, y^{(\ell)}\}$}. 
  Then there is a non-negative vector $c \in \puiseux{\RR}{t}_{\geq 0}^d$ such that $\trans{c}\cdot x^{(i)} = 1$ and $\trans{c} \cdot u > 1$ for all other vertices $u$ of $M$. 
  \added{Furthermore, $\trans{c}\cdot (x^{(i)} + \lambda y^{(j)}) = 1$ for all $\lambda \in \puiseux{\RR}{t}_{\geq 0}$ and $1 \leq j \leq \ell$: this implies that $c_j = 0$ in the first $\ell$ entries.}

  As the valuation map is an order preserving semiring homomorphism, the inequality $\val(c) \odot x \geq 0$ is valid for the monomial tropical polyhedron.
  Furthermore, it is tight at the vertices $\val(x^{(i)})$.
  \added{In addition, the rays $e^{(1)}, \dots, e^{(\ell)}$ are incident to the apex $\val(c)$ as $\val(c_j) = \infty$ for $1 \leq j \leq \ell$.}
  Now the claim follows from Lemma~\ref{lem:incident+vertices+valid+inequalities}.

\end{proof}

\added{
\begin{remark}
For an arbitrary lift $M$ of $\monomial{V}$, we may have multiple vertices of $M$ that map to the same vertex of $\monomial{V}$ under the valuation map.
As a result, $\cF(M)$ and $\cF(V)$ may not be on the same vertex set.
We can state a weaker version of Theorem~\ref{thm:lift+subcomplex} for arbitrary lifts by defining a ``degenerate'' facet complex that identifies vertices of $M$ if they have the same image in the valuation map.
This degenerate facet complex will be subcomplex of $\cF(V)$.
\end{remark}
}

\added{The facet complex also captures the crucial information of the deformations of a monomial ideal.
We make this precise in the next theorem. }

Let $\varepsilon^{(j)} \in \RR^d$ for $j \in [n]$, such that $\monomial{V_{\varepsilon}}$ is a deformation of $\monomial{V}$\added{; recall that this is equivalent to $\varepsilon^{(j)}$ satisfying
\[
v^{(j)}_i < v^{(k)}_i \Rightarrow v^{(j)}_i + \varepsilon^{(j)}_i < v^{(k)}_i + \varepsilon^{(k)}_i \enspace .
\]
}
\added{By identifying $v^{(j)}$ with the deformed vertex $v^{(j)} + \varepsilon^{(j)}$, the facet complex $\cF(V_{\varepsilon})$ can be considered on the same vertex set as $\cF(V)$.}
The following theorem shows $\cF(V_{\varepsilon})$ is a subcomplex of $\cF(V)$.

\begin{theorem}
Let $\monomial{V_{\varepsilon}}$ be a deformation of $\monomial{V}$.
\added{Then $\cF(V_{\varepsilon})$ is a subcomplex of $\cF(V)$.}
\end{theorem}
\begin{proof}
\added{
Denote the vertices of $\monomial{V_\varepsilon}$ by $w^{(j)} := v^{(j)} + \varepsilon^{(j)}$.
Recall that the definition of deformation gives us the following implications that will be of use throughout:
\begin{align*}
v_i^{(j)} < v_i^{(k)} &\Rightarrow w_i^{(j)} < w_i^{(k)} \, , \\
w_i^{(j)} = w_i^{(k)} &\Rightarrow v_i^{(j)} = v_i^{(k)} \, . \\
\end{align*}
Let $b^\varepsilon$ be a facet-apex of $\monomial{V_\varepsilon}$ and define the point $b \in \monomial{V}$ by
\[
b_i = \begin{cases} v_i^{(j)} & b^\varepsilon_i = w_i^{(j)} \\ \infty & b^\varepsilon_i = \infty  \end{cases} \, .
\]
We note that $b$ is defined such that $b^\varepsilon \rightarrow b$ as we let each $\varepsilon^{(j)} \rightarrow 0$.

We first claim that $b$ is incident to some principal facet-apex $a$ of $\monomial{V}$.
Recall that the interior of $\monomial{V}$ is given by $\bigcup_{v \in V} (v + \RR_{>0})$, therefore $b$ is incident to some principal facet-apex if there exists no $v \in V$ such that $v < b$.
Suppose there exists some $v^{(k)} < b$, then for each $i \in [d]$ with $b_i \neq \infty$, there exists some $j$ such that $v_i^{(k)} < v_i^{(j)} = b_i$.
This implies $w_i^{(k)} < w_i^{(j)} = b_i^\varepsilon$, and so $w^{(k)} < b^\varepsilon$.
However, this contradicts condition~\eqref{eq:apex+cond+1} of Proposition~\ref{prop:facetapex+characterisation} that $b^\varepsilon$ is a facet-apex of $\monomial{V_\varepsilon}$.
Therefore $b$ must be incident to some facet-apex $a$.

Let $\Delta_{b^{\varepsilon}}$ and $\Delta_{a}$ be the maximal simplices of $\cF(V_\varepsilon)$ and $\cF(V)$ corresponding to $b^{\varepsilon}$ and $a$ respectively.
If the vertex $w^{(j)}$ is incident to the facet-apex $b^\varepsilon$ of $\monomial{V_\varepsilon}$, deformation implies $v^{(j)}$ must also be incident to the point $b$ of $\monomial{V}$, and therefore also $a$.
Similarly, if the ray $e^{(i)}$ is incident to $b^\varepsilon$ then it is also incident to $a$.
As a result, $\Delta_{b^{\varepsilon}}$ is a subsimplex of $\Delta_a$.
}
\end{proof}

\begin{remark}
  Recall that in the context of monomial ideals and their resolutions, one usually uses the Scarf complex instead of the Scarf poset.
  Using a slight modification of \cite[Theorem 6.13]{MillerSturmfels:2005} and Proposition~\ref{prop:Scarf+cover+subposet}, one can deduce that the Scarf complex embeds into the facet complex.
\end{remark}

\subsection{Properties of the facet complex} \label{subsec:properties+facet+complex}
\cite[Conjecture 4.7]{DevelinYu:2007} is a list of desirable properties for tropical face posets.
In particular, the third item states the homology of the face poset should be that of a sphere.
The following theorem shows that the facet complex fulfils this property.

\added{
For its proof, we use a version of the famous Nerve theorem.
We use a version close to the original one by Borsuk~\cite{Borsuk:1948} and refer to a more general overview in recent work~\cite{BauerKerberRollRolle:2022}. 

Let $T$ be a finite-dimensional metric space and let $U \subset 2^T$ be a finite set of compact subsets of $T$.
The \emph{nerve complex} of $U$ is the simplicial complex with vertex set $U$ such that a subset $S \subseteq U$ forms a simplex exactly if $\bigcap_{s \in S} s \neq \emptyset$. 

\begin{lemma}[Nerve Theorem] \label{lem:nerve-theorem}
 If all intersections $\bigcap_{s \in S} s$ for $S \subseteq U$ are contractible, then the nerve complex is homotopy equivalent with the whole union~$\bigcup_{s \in U} s$. 
\end{lemma}

We use this to prove the following.
}
\begin{theorem} \label{thm:facet+complex+sphere}
  The facet complex of a monomial tropical polyhedron $\monomial{V}$ in $\TTmax^d$ is homotopy equivalent to a $(d-1)$-sphere.
\end{theorem}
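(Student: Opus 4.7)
The plan is to combine a topological identification of $\partial\closedmonomial{V}$ with a reduction to the strongly generic case via the embedding theorems of Section~\ref{subsec:embedding+facet+complex}.

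First, I would show that $\closedmonomial{V} \subset \TTclosed^d$ is homeomorphic to a closed $d$-ball, whence $\partial\closedmonomial{V} \cong S^{d-1}$. Via any order isomorphism $\TTclosed \to [0,1]$, the hypercube $\TTclosed^d$ identifies with $[0,1]^d$; in this model $\closedmonomial{V}$ is a finite union of closed upward-orthants containing the top corner $\bm\infty$, and the componentwise affine homotopy $H(p,t) = (1-t)\,p + t\,\bm\infty$ stays inside $\closedmonomial{V}$. This exhibits the region as compact and star-shaped with respect to $\bm\infty$ with piecewise-linear boundary, hence PL-homeomorphic to the closed $d$-ball with boundary $S^{d-1}$.

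Second, I would establish the claim for a strong generification $\monomial{V_\varepsilon}$ of $\monomial{V}$ in the sense of Definition~\ref{def:deformation+monomial+polyhedron}. By Theorem~\ref{thm:generic+points+all+posets+coincide} the facet complex $\cF(V_\varepsilon)$ is pure of dimension $d-1$ and agrees with the Scarf complex, and a generic simplicial Puiseux lift $M$ of $\monomial{V_\varepsilon}$ produces, after adjoining the coordinate rays as vertices at infinity in the style of \cite{JoswigKaibelPfetschZiegler:2001}, a simplicial $d$-polytope whose boundary is the simplicial $(d-1)$-sphere $\cF(M)$. The embedding theorem for lifts in Section~\ref{subsec:embedding+facet+complex} then gives $\cF(M) = \cF(V_\varepsilon)$ in this generic setting, so $\cF(V_\varepsilon) \cong S^{d-1}$.

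Third, I would transfer the result from $\monomial{V_\varepsilon}$ to $\monomial{V}$ using the embedding theorem for generifications. This identifies $\cF(V_\varepsilon)$ as a subcomplex of $\cF(V)$ on the same vertex set, with each maximal simplex $\sigma$ of $\cF(V)$ a full simplex containing the subcomplex $T_\sigma = \bigcup_{\tau \in \psi^{-1}(\sigma)}\tau$ formed by those $(d-1)$-simplices of $\cF(V_\varepsilon)$ whose $\psi$-image is $\sigma$. Applying the first step to the closed facet $F_a$ corresponding to $\sigma$ gives $F_a \cong D^{d-1}$, and $T_\sigma$ is a simplicial triangulation of $F_a$ inherited from the generification, hence contractible. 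Since $\sigma$ is itself contractible, attaching $\sigma$ along $T_\sigma$ is a homotopy equivalence, and carrying out all such attachments yields $\cF(V) \simeq \cF(V_\varepsilon) \simeq S^{d-1}$.

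The main obstacle is the simultaneous gluing argument in the third step: distinct subcomplexes $T_\sigma$ and $T_{\sigma'}$ may share lower-dimensional faces, and a simplex of $\cF(V) \setminus \cF(V_\varepsilon)$ may be a face of several maximal $\sigma$'s, so compatibility must be verified via an iterated pushout or Mayer-Vietoris argument. A further subtlety is ensuring that in arbitrarily degenerate configurations (such as Example~\ref{ex:degenerate+polyhedra}) the strong generification really does induce a PL triangulation of each facet $F_a$ rather than merely a subcomplex of one; this requires careful analysis of how the deformation interacts with multiply-incident faces at a single geometric stratum.
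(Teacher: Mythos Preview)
Your outline is a genuinely different route from the paper's proof. The paper does not pass through a generification or a lift at all. After observing (as you do) that $\partial\closedmonomial{V}\cong S^{d-1}$ via the order-preserving homeomorphism $\TTclosed\cong[-1,1]$, it builds a good closed cover of $\partial\closedmonomial{V}$ indexed by $\overline{V}$: for each vertex $v$ the set $C_v=\{p:\exists\,a\in F,\ i\in[d],\ v\le p\le a,\ v_i=p_i=a_i\}$, and for each ray $e^{(i)}$ the set $D_i=\{p:p_i=+\infty\}$. It then checks that a finite intersection $A_S$ of these pieces is non-empty precisely when $S$ is a face of the facet complex, and that each non-empty $A_S$ is $\min$-tropically convex (hence contractible). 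The Nerve Theorem then gives $\cF(V)\simeq\partial\closedmonomial{V}\simeq S^{d-1}$ directly. What this buys over your approach is that it works uniformly for every $\monomial{V}$, generic or not, with no gluing or deformation bookkeeping.

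The gap you identify in your third step is real, and I do not see how to close it along the lines you sketch. The assertion ``$T_\sigma$ is a simplicial triangulation of $F_a$ inherited from the generification'' is not justified: $T_\sigma$ is an abstract subcomplex of $\cF(V_\varepsilon)$, while $F_a$ is a geometric piece of $\partial\closedmonomial{V}$ for the \emph{undeformed} $V$, and you have not produced a map from one to the other. The facet-apices of $\monomial{V_\varepsilon}$ that converge to $a$ cut out pieces of $\partial\closedmonomial{V_\varepsilon}$, not of $\partial\closedmonomial{V}$, so even contractibility of $T_\sigma$ is not immediate. And even granting that, ``attaching $\sigma$ along $T_\sigma$'' is not what happens when you pass from $\cF(V_\varepsilon)$ to $\cF(V)$: you are filling in \emph{all} faces of $\sigma$ not already present, and those faces can lie in several $T_{\sigma'}$ simultaneously (as you note). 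Turning this into a rigorous homotopy equivalence would require an inductive filtration or a carrier/acyclic-cover argument that, once set up, is essentially the paper's nerve argument in disguise. Since your first step already hands you $\partial\closedmonomial{V}\simeq S^{d-1}$, the shortest completion of your own outline is to drop steps~2 and~3 and instead exhibit a good cover of $\partial\closedmonomial{V}$ with nerve $\cF(V)$, which is exactly what the paper does.
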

\begin{proof}

  \added{
  For each generator $v \in V$, we define 
\[
C_v = \SetOf{p \in \closedmonomial{V}}{\exists a \in F \ , \ i \in [d] \text{ such that } v \leq p \leq a \ , \ v_i = a_i = p_i} \enspace .
\]
This is equivalent to $v,p$ being incident with $a$ and each other.
Furthermore, for $i \in [n]$, we define
\[
D_i = \SetOf{p \in \closedmonomial{V}}{p_i = +\infty} \enspace .
\]

\smallskip

Firstly, we prove that $\SetOf{C_v}{v \in V} \cup \SetOf{D_i}{i \in [d]}$ forms a set cover of the boundary of $\closedmonomial{V}$.
The points with an $+\infty$ entry are covered by the sets $D_i$. 
For the other points, recall that the finite boundary points are also the finite boundary points of the dual monomial tropical polyhedron. 
The characterization of vertices from Corollary~\ref{cor:vertex+containment} shows that a set $C_v$ is the intersection of the finite part of the boundary of $\complementarymonomial{V}$ with the $\min$-tropical halfspace with apex $v$. 
Now, the representation of the dual monomial tropical polyhedron in Theorem~\ref{thm:complementary-cones} implies that each finite point in the boundary of $\monomial{V}$ is covered by some $C_v$. 

\smallskip

To see the compactness of the sets, we apply an order preserving homeomorphism. 
We can think of the closed hypercube $[-1,1]^d$ instead of $\TTclosed^d$ via the order preserving homeomorphism
\begin{equation} \label{eq:hypercube+homeo}
  \begin{aligned}
    \left[-1,1\right]\ &\to\ \TTclosed \\
    x\ &\mapsto\ \tan\left(\frac{\pi x}{2}\right)
    \enspace ,
  \end{aligned}
\end{equation}
which extends componentwise.
Under this map, the sets $C_v$ and $D_i$ are mapped to compact sets.

\smallskip

  }
  For each set $S \subseteq \overline{V}$, we consider 
\[
A_S = \left(\bigcap_{v \in S \cap V} C_v \right) \cap \left(\bigcap_{e^{(i)} \in S} D_i \right) \subseteq \TTclosed^d \enspace .
\]

The final observation for applying the Nerve Theorem is that $A_S$ is contractible.
Each $C_v$ is $\min$-tropically convex: let $r = \min(p, \lambda+q)$ where $p,q \in C_v$ and $\lambda \geq 0$.
Then there exists $a \in F$ such that $v \leq r \leq p \leq a$ with equality in some coordinate, and so $r$ is also in $C_v$.
This implies any intersection $\bigcap_{v \in S} C_v$ is $\min$-tropically convex and therefore contractible.
Each $\bigcap_{i \in I} D_i$ is homeomorphic to a closed ball and therefore contractible.
Finally, we note that the intersection of any $\min$-tropically convex space $C$ with $D_i$ is also $\min$-tropically convex.
For any $p,q \in C$ with $p_i = q_i = \infty$ and $\lambda \geq 0$, the element $r = \min(p, \lambda + q)$ is in $C \cap D_i$ and therefore is $\min$-tropically convex.
Iterating this gives the claim that each $A_S$ is contractible or empty.

\added{Hence, we can apply Lemma~\ref{lem:nerve-theorem} to deduce that the boundary of $\monomial{V}$ is homotopy equivalent to the nerve complex of its cover $\SetOf{C_v}{v \in V} \cup \SetOf{D_i}{i \in [d]}$. }

\bigskip

Next, we show that the nerve complex equals the facet complex. 
We claim $S$ is a face of the facet complex if and only if $A_S$ is non-empty.

Pick some facet-apex $a \in F$ and consider the corresponding closed set $S \subseteq \overline{V}$ of vertices and rays incident with $a$.
By the second condition of Proposition \ref{prop:facetapex+characterisation}, we have $A_S = \{a\}$.
\added{To see this, let $I \subseteq [d]$ such that $a_i \neq \infty$.
For each $i \in I$, choose an element $w^{(i)} \in V$ with $w^{(i)}_i = a_i$ as in Proposition \ref{prop:facetapex+characterisation}.
  Then $w^{(i)} \in S$ for all $i \in I$ and the intersection $\left(\bigcap_{i \in I} C_{w_i}\right) \cap \left(\bigcap_{j \notin I} D_j\right)$ is just $\{a\}$. }
For the far apex $b^{\infty}$, the corresponding closed set is $S = \{e^{(1)},\dots,e^{(d)}\}$ with $A_S = \{\bm{\infty}\}$.

Conversely, consider some $S \subseteq \overline{V}$ such that $A_S \neq \emptyset$.
For each $p \in A_S$, we have $p \geq v$ for all $v \in S \cap V$ and $p_i = \infty$ for each $e^{(i)} \in S$.
If $p \in C_v$ for some $v\in V$, there exists some facet-apex $a \in F$ incident with $p$, and therefore incident with all elements of $S$.
If $p \notin \closedcomplementarymonomial{V}$ then $p$ is in the boundary of $\TTclosed^d$.
Otherwise $S \subseteq \maxunit$, the closed set of rays corresponding to the far-apex $b^{\infty}$.
This shows the maximal faces of the facet complex are those closed sets incident to a single apex.

\bigskip

\added{Finally, we show that the boundary of $\closedmonomial{V}$ is homotopy equivalent with a sphere.
Again, consider the image of $\TTclosed$ under the homeomorphism described in \eqref{eq:hypercube+homeo} (to avoid an extra treatment of $+\infty$). 
Let $\Omega$ be a point which is coordinate-wise bigger than all points in $V$, and $\varepsilon > 0$ sufficiently small. 
We get a homotopy equivalence from the boundary of $\closedmonomial{V}$ to a Euclidean ball around $\omega = \Omega + \varepsilon \1$ with radius $\varepsilon$ by the retraction along the lines emerging from $\omega$.
  This concludes the proof. }
\end{proof}

We claim the facet complex of $\monomial{V}$ encodes homological data of $I_V$.
To do so, we label each vertex of the facet complex by the corresponding generator or tropical unit vector, and label faces by the maximum of its contained vertices.
To get data of $I_V$, we restrict to the \emph{bounded complex} $\cF_{fin}(V)$.
This subcomplex of the facet complex consists of those faces which do not contain a tropical unit vector.
It is the analogue of the bounded complex of bounded faces of an unbounded polyhedron, see e.g.~\cite{JoswigKaibelPfetschZiegler:2001}.
Note that the resulting complex is labelled by finite vectors.

The finite generators $V$ of $\monomial{V}$ are the atoms of the affine part of the vertex-facet lattice, hence they form a crosscut.

\begin{lemma} \label{lem:crosscut+facet+complex}
  The crosscut complex $\Delta(V)$ of the affine part of the vertex-facet lattice is the bounded complex $\cF_{fin}(V)$. 
\end{lemma}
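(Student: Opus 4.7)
The plan is to read off both complexes from the vertex-facet incidences and check that their face sets coincide; we identify each $v\in V$ with the atom $\clos{\{v\}}$ of the affine part of the vertex-facet lattice, which makes $V$ a crosscut as noted just before the lemma. Since both complexes are simplicial complexes on the vertex set $V$, it suffices to compare faces subset-by-subset.

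First, I would reduce the spanning condition. Distinct atoms of any lattice have meet $\hat{0}$, so for any $S\subseteq V$ with $|S|\geq 2$ one has $\bigwedge S = \emptyset$ automatically in the affine part; hence $S$ spans in the sense of crosscuts precisely when $\bigvee S = \overline{V}$. Singletons and the empty set are non-spanning by convention, and they sit in $\cF_{fin}(V)$ since each vertex is incident with at least one facet by Corollary~\ref{cor:vertex+containment}.

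Next, I would unpack the join. In the lattice of closed sets associated to the bipartite vertex-facet incidence graph, the join of a family of atoms is the closure
\[
\clos{S} \ = \ \bigcap \SetOf{\neighbour(a)}{a\in\overline{F},\ S\subseteq \neighbour(a)} \enspace .
\]
The empty intersection equals the full ground set $\overline{V}$, so $\bigvee S = \overline{V}$ exactly when no $a\in\overline{F}$ is incident with every element of $S$. Because $S\subseteq V$ contains no ray, it cannot be incident with the far-apex $b^{\infty}$, hence only principal and boundary facet-apices play a role. Consequently, $S$ is non-spanning if and only if some facet of $\monomial{V}$ is incident with all of $S$.

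Finally, by the definition of the facet complex, a subset $S\subseteq \overline{V}$ is a face of $\cF(V)$ iff it is contained in the vertex set of some facet; the bounded subcomplex $\cF_{fin}(V)$ restricts exactly to those faces lying in $V$. This matches verbatim the condition obtained in the previous step, so the two complexes coincide. The main point to watch is the convention that the empty intersection of closed sets yields the full ground set $\overline{V}$; this is precisely what converts ``no facet contains $S$'' into ``$\bigvee S = \hat{1}$'', and it is the only step where the bookkeeping might go astray.
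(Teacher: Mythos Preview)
Your argument is correct and arrives at the same key equivalence as the paper: a subset $S\subseteq V$ is non-spanning in the crosscut sense precisely when all its elements are incident with a common facet-apex. The paper reaches this via the geometric intermediary ``the componentwise maximum $m_S$ lies in the interior of $\monomial{V}$,'' whereas you stay purely lattice-theoretic, reducing the spanning condition to the join (since meets of distinct atoms are automatically $\hat{0}$) and then unwinding the closure operator directly. Both routes are short and lead to the same endpoint; yours has the minor advantage of not needing the geometric embedding into $\TTclosed^d$, while the paper's formulation makes the connection to the boundary of $\monomial{V}$ more visible.
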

\begin{proof}
  A subset of $V$ is spanning if and only if its componentwise maximum lies in the interior of $\monomial{V}$.
  By definition of the facet-apices, a subset is not spanning exactly if all its points are incident with a facet apex.
  This implies the claim.
\end{proof}

\begin{theorem} \label{thm:facet+complex+Betti}
The finite facet complex $\cF_{fin}(V)$ of $\monomial{V}$ encodes the Betti numbers of $I_V$.
\end{theorem}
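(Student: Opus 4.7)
The plan is to use Theorem \ref{thm:lcm+betti}, which expresses the Betti numbers of $S/I_V$ in multidegree $u \in L_{I_V}$ as reduced homologies of open intervals in the LCM-lattice, and then transport these homologies onto sublevel subcomplexes of $\cF_{fin}(V)$ by combining two crosscut arguments.

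First I would label every simplex $S \in \cF_{fin}(V)$ by its tropical barycenter $m_S = \tbary(S)$, which equals the componentwise maximum by Lemma \ref{lem:tropical+barycenter}. These labels are exactly the elements of the max-lattice supported on subsets of $V$, and the LCM-lattice $L_{I_V}$ is recovered as the sublattice of labels with no $+\infty$ coordinate. For each $u \in L_{I_V}$ define the sublevel subcomplex
\[
\cF_{fin}^{<u}(V) \;:=\; \{\, S \in \cF_{fin}(V) \mid m_S < u \,\},
\]
where $<$ is the strict order of the LCM-lattice. The identity to prove is
\[
\beta_{i,u}(S/I_V) \;=\; \dim_{\KK}\, \tilde{H}_{i-2}\bigl(\cF_{fin}^{<u}(V);\KK\bigr),
\]
which realises $\cF_{fin}(V)$ together with its barycentric labelling as a complete invariant of all graded Betti numbers of $I_V$.

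The atoms of the interval $(\hat{0},u]$ in $L_{I_V}$ are the vertices $V_{\leq u} = \{v \in V : v \leq u\}$ and they form a crosscut, so Theorem \ref{thm:crosscut+homology} yields $\tilde{H}_i((\hat{0},u)_{L_{I_V}}) \cong \tilde{H}_i(\Delta(V_{\leq u}))$, where $\Delta(V_{\leq u}) = \{S \subseteq V_{\leq u} : m_S < u\}$ is the crosscut complex of the interval. The inclusion $\cF_{fin}^{<u}(V) \subseteq \Delta(V_{\leq u})$ is immediate because $v \in S$ forces $v \leq m_S < u$, and the heart of the proof is to verify that this inclusion is a homotopy equivalence. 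To do this I would adapt the nerve cover from the proof of Theorem \ref{thm:facet+complex+sphere}: the pieces $C_v$ constructed there, intersected with the lower box $\{x \leq u\} \subset \TTclosed^d$, form a cover of the relevant slab of the boundary of $\monomial{V}$ whose nerve is precisely $\cF_{fin}^{<u}(V)$, while all nonempty intersections remain min-tropically convex after truncation and are therefore contractible; the nerve theorem then identifies the homotopy type of $\Delta(V_{\leq u})$ with that of $\cF_{fin}^{<u}(V)$.

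The main obstacle is to handle those simplices $S \in \Delta(V_{\leq u})$ whose barycenter $m_S$ lies in the interior of $\monomial{V}$: such $S$ sit in the crosscut complex but fail to lie in $\cF_{fin}(V)$, and any valid argument must show they contribute no extra homology. The key observation is that when $m_S < u$ and $m_S$ is interior to $\monomial{V}$, the point $m_S$ still lies on the boundary of the truncated polyhedron $\monomial{V} \cap \{x \leq u\}$ through a face of the bounding box, so the restricted cover still hits $m_S$ by a contractible intersection of the $C_v$ with the truncating hyperplanes; in effect, these interior barycenters are absorbed into contractible patches of the nerve and do not generate spurious cycles. Making this nerve argument precise and combining it with Theorem \ref{thm:lcm+betti} then yields the desired formula and shows that $\cF_{fin}(V)$ encodes all graded Betti numbers of $I_V$.
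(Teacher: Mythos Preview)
Your route diverges from the paper's, and the nerve argument you sketch does not close.  The paper does not compare $\cF_{fin}^{<u}(V)$ with $\Delta(V_{\leq u})$ directly.  Instead it exploits the chain of posets already assembled in Section~\ref{sec:face+posets}: by Proposition~\ref{prop:syzygy+Betti+poset} the syzygy points coincide with the Betti poset, and by Lemma~\ref{lem:syzygy+points+CP+points} together with Proposition~\ref{prop:CP+subposet+VIF} every syzygy point already belongs to the (bounded part of the) vertex-facet lattice.  Since the Betti poset contains every homologically contributing element of the LCM-lattice, any intermediate poset between the Betti poset and the LCM-lattice has the same interval homology; in particular the bounded vertex-facet lattice does.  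A single application of the crosscut theorem (Lemma~\ref{lem:crosscut+facet+complex}) then transfers this to $\cF_{fin}(V)$.  No truncated nerve is needed.

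The specific gap in your argument is the assertion that whenever $m_S$ is interior to $\monomial{V}$ with $m_S<u$, it still lies on the boundary of $\monomial{V}\cap\{x\le u\}$ ``through a face of the bounding box''.  This is false: one can have $m_S$ strictly below $u$ in every coordinate.  For instance take
\[
V=\{(1,1,1),(3,0,0),(0,3,0),(0,0,3),(4,-1,-1),(-1,4,-1),(-1,-1,4)\}\subset\TTmax^3,
\]
set $S=\{(3,0,0),(0,3,0),(0,0,3)\}$ and $u=(4,4,4)\in L_{I_V}$.  Then $m_S=(3,3,3)$ is interior to $\monomial{V}$ because $(1,1,1)<m_S$, yet $m_S$ is also interior to the box $\{x\le u\}$ since $(m_S)_i<u_i$ for all $i$.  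Hence $m_S$ lies in the interior of the truncated region and your cover misses it entirely; the nerve of the truncated $C_v$'s is \emph{not} guaranteed to be homotopy equivalent to $\Delta(V_{\leq u})$ by the mechanism you describe.  Your target identity may still hold, but establishing it requires the Betti-poset comparison the paper uses (or an equivalent homotopy-theoretic device), not the bounding-box heuristic.
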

\begin{proof}
  By Proposition~\ref{prop:syzygy+Betti+poset}, the syzygy poset equals the Betti poset of the LCM-lattice.
  Combining Lemma~\ref{lem:syzygy+points+CP+points} and Proposition~\ref{prop:CP+subposet+VIF}, we see that all syzygy points are in the image of the vertex-facet lattice in the LCM-lattice.
  Hence, the LCM-lattice and the bounded part of the vertex-facet lattice have the same lattice homology. 
  As a lattice has the same homology as its crosscut complex by Theorem~\ref{thm:crosscut+homology}, we can deduce from Lemma~\ref{lem:crosscut+facet+complex} that the finite facet complex $\cF_{fin}(V)$ has the same homology as the ideal $I_V$.
\end{proof}

\section{Representation of tropical polyhedra via monomial tropical polyhedra} \label{sec:representation+by+mono+trop+poly}

The following construction demonstrates that the study of monomial tropical polyhedra lays the foundations for face structures of more general tropical polyhedra.

\subsection{$i$th monomial tropical cones} 

In~\cite{JoswigLoho:2017}, Joswig and the first author introduced \emph{monomial tropical cones}.
Let $\homogmaxunit = \{\widehat{e}^{(0)},\widehat{e}^{(1)},\dots,\widehat{e}^{(d)}\}$ where
\[
\hat{e}^{(i)}_{k} \ = \
\begin{cases}
  0  & \mbox{ if }  i = k \\
  -\infty & \mbox{ otherwise }
\end{cases}
\qquad \text{for } 0 \leq i,k \leq d \enspace .
\]
be the set of tropical unit vectors in $\TTmax^{d+1}$.
\begin{definition} \label{def:monomial+tropical+cone}
The \emph{$i$th monomial tropical cone} of a finite set $U \subset \TTmax^{d+1}$ is the tropical cone
\begin{align*}
\monocone{i}{U} = \tcone{U \cup \homogmaxunit \setminus \widehat{e}^{(i)}} \enspace .
\end{align*}
\end{definition}
We remark that~\cite{JoswigLoho:2017} defined monomial tropical cones for generating sets $U$ where $u_i \neq -\infty$ for all $u \in U$, however Definition \ref{def:monomial+tropical+cone} does not require this assumption.
If $U$ does satisfy this assumption, we get their original characterisation of monomial tropical cones:
\begin{align*}
\monocone{i}{U} &= \bigcup_{u \in U} \SetOf{x \in \TTmax^{d+1}}{x_i - u_i \leq x_k - u_k} \mbox{ for all } k \in [d]_0 \mbox{ with } u_k \neq -\infty \enspace .
\end{align*}
We also remark that one can relax the finiteness condition on $U$ for the results in this section, but $\monocone{i}{U}$ may not be a tropical cone, rather a tropical conic set.
Monomial tropical cones already appear in~\cite{AllamigeonGaubertKatz:2011} under the name \emph{$i$th polar cones} as building blocks for a canonical exterior description of tropical cones.

Given $V \in \TTmax^d$, observe that $\monocone{0}{\widehat{V}}$, where $\widehat{V} = \SetOf{(0,v)}{v \in V}$, is the homogenisation of the monomial tropical polyhedron $\monomial{V}$.
This observation offers two directions for generalisation: we can consider the dehomogenisation of $i$th monomial cones for $i \neq 0$, and we can consider those with generators whose first coordinate is $-\infty$.
This leads to a more general definition of monomial tropical polyhedron.
\begin{definition} \label{def:i+monomial+polyhedron}
Fix $i \in [d]_0$ and let $P = \tconv{V} \oplus \tcone{W}$ for $V,W \subset \TTmax^d$.
The \emph{$i$th monomial tropical polyhedron} induced by $P$ is the tropical polyhedron
\begin{equation}
\begin{split}
\imonomial{i}{P} &= \tconv{V \cup e^{(0)}} \oplus \tcone{W \cup (\maxunit \setminus e^{(i)})} \quad \text{ for } i \neq 0 \enspace , \\
\imonomial{0}{P} &= \tconv{V} \oplus \tcone{W \cup \maxunit} \enspace ,
\end{split}
\end{equation}
where $e^{(0)} = (-\infty,\dots,-\infty)$.
\end{definition}

The $i$th monomial tropical polyhedron $\imonomial{i}{P}$ is precisely the dehomogenisation of the $i$th monomial cone $\monocone{i}{\widehat{V}\cup\widehat{W}}$ as defined in Section \ref{sec:tropical+hypercube}.
Note that Definition \ref{def:i+monomial+polyhedron} is not symmetric due to the fact that we have dehomogenised the monomial tropical cone with respect to $x_0$.

One can see that Definition~\ref{def:monomial+tropical+polyhedron} is a special case of Definition~\ref{def:i+monomial+polyhedron} by setting $i = 0$ and $W = \emptyset$. 
Note that outside of this section, the simplified definition suffices for our purposes.

\subsection{Intersection of monomial tropical cones}

The following Proposition implies that tropical convexity is encoded in the interplay of these $i$th monomial tropical polyhedra.

\begin{proposition} \label{prop:monomial+cone+decomp}
Let $P = \tconv{V} \oplus \tcone{W}$ be a tropical polyhedron in $\TTmax^d$.
$P$ is equal to the intersection
  \begin{equation} \label{eq:representation+intersection+mono+poly}
  \bigcap_{i \in [d]_0} \imonomial{i}{P} \enspace .
  \end{equation}
\end{proposition}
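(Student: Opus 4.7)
The plan is to reduce the proposition to the corresponding identity at the level of tropical cones via homogenisation. The inclusion $P \subseteq \bigcap_{i \in [d]_0} \imonomial{i}{P}$ is immediate from Definition~\ref{def:i+monomial+polyhedron}, since each $\imonomial{i}{P}$ is obtained from $P$ by enlarging its generating set. The content of the proposition is the reverse inclusion, for which I would pass to $\widehat{P} \subseteq \TTmax^{d+1}$ and appeal to the $i$-th monomial cones $\monocone{i}{\widehat{V} \cup \widehat{W}}$ from~\cite{AllamigeonGaubertKatz:2011}.

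The first step is a compatibility check: from Definitions~\ref{def:monomial+tropical+cone} and~\ref{def:i+monomial+polyhedron} one verifies that $\widehat{\imonomial{i}{P}} = \monocone{i}{\widehat{V} \cup \widehat{W}}$ for every $i \in [d]_0$. For $i \neq 0$, the homogenisation of the extra vertex $e^{(0)}$ produces $\widehat{e}^{(0)}$, while the homogenisation of $\maxunit \setminus e^{(i)}$ as rays produces $\homogmaxunit \setminus \{\widehat{e}^{(0)}, \widehat{e}^{(i)}\}$; together they assemble into $\homogmaxunit \setminus \widehat{e}^{(i)}$. For $i = 0$, the homogenisation of $\maxunit$ directly yields $\homogmaxunit \setminus \widehat{e}^{(0)}$.

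The key step is the cone identity $\widehat{P} = \bigcap_{i \in [d]_0} \monocone{i}{\widehat{V} \cup \widehat{W}}$, which I would prove directly. Take $x$ in the intersection. For each $i$ with $x_i \neq -\infty$, the rays $\widehat{e}^{(k)}$ with $k \neq i$ vanish in coordinate $i$, so $x_i$ must be attained by some $u^{(i)} \in \widehat{V} \cup \widehat{W}$ with $u^{(i)}_i \neq -\infty$ and scalar $\lambda_i := x_i - u^{(i)}_i$. Membership in $\monocone{i}{\widehat{V} \cup \widehat{W}}$ then forces $x_k \geq \lambda_i + u^{(i)}_k$ for every $k$. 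Setting $y = \bigoplus_i \lambda_i \odot u^{(i)}$, these inequalities yield $y \leq x$, while choosing $i = k$ in each coordinate with $x_k \neq -\infty$ yields $y_k \geq x_k$; coordinates with $x_k = -\infty$ are handled by observing that the inequalities force $\lambda_i + u^{(i)}_k = -\infty$ for every relevant $i$. Hence $x = y \in \widehat{P}$.

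Finally, I would dehomogenise by intersecting both sides of the cone identity with $\{0\} \times \TTmax^d$. Using $\widehat{Q} \cap (\{0\} \times \TTmax^d) = \{0\} \times Q$ from~\cite{AllamigeonGaubertGoubault:2013}, recalled in Section~\ref{sec:tropical+hypercube}, and the fact that slicing commutes with intersections, one obtains $P = \bigcap_{i \in [d]_0} \imonomial{i}{P}$. The main obstacle is the careful treatment of $-\infty$ coordinates in the cone argument: one must verify both that a generator with $u^{(i)}_i \neq -\infty$ exists whenever $x_i \neq -\infty$, and that the tropical combination correctly produces $-\infty$ in coordinates where $x$ does.
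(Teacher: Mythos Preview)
Your proposal is correct and follows essentially the same route as the paper: homogenise, establish the cone identity $\tcone{\widehat{V}\cup\widehat{W}} = \bigcap_{i \in [d]_0} \monocone{i}{\widehat{V}\cup\widehat{W}}$, then dehomogenise. The only difference is packaging: the paper isolates the extraction of your $u^{(i)}$ as the sector characterisation $\monocone{i}{U} = \bigcup_{u \in U} \widehat{S}_i(u)$ (Corollary~\ref{cor:monomial+cone+sector}) and the reconstruction of $x$ as the Tropical Farkas Lemma (Lemma~\ref{lem:cone+sector+containment}), whereas you carry out the same computation inline.
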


\begin{remark}
  By taking the unique minimal exterior description of each $i$th monomial tropical polyhedron in~\eqref{eq:representation+intersection+mono+poly}, we obtain a canonical exterior description of an arbitary tropical polyhedron.
  This representation already occurs in the proof of~\cite[Prop.~2]{AllamigeonGaubertKatz:2011}.

    Addressing again one of our main motivations for this work, this representation leads to an extension of our results to arbitrary tropical polyhedra.
    To capture all the combinatorial data, one can define a \emph{face stack}, which contains the information of the $i$th monomial tropical polyhedra for all $i \in [d]_0$.
    The properties of such a face stack are subject to further work. 
\end{remark}

We shall show this by proving its analogous homogeneous statement, for which we require some additional machinery.
Given a point $u \in \TTmax^{d+1} \setminus \{(-\infty,\dots,-\infty)\}$, we define its \emph{$i$th sector} as the set of points
\begin{align*}
\widehat{S}_i(u) = \bigcap_{k \in [d]_0} \SetOf{z \in \TTmax^{d+1}}{z_i + u_k \leq z_k + u_i} \enspace .
\end{align*}
Note that when restricted to $\RR^{d+1}$, this definition aligns with the usual definition \cite{JoswigLoho:2016}, but this change in formulation allows us to account for points with infinite coordinates.
In particular, if $u_i = -\infty$ then we have $\widehat{S}_i(u) = \SetOf{z \in \TTmax^{d+1}}{z_i = -\infty}$.
We remark that the usual definition of a sector breaks down for $u = (-\infty,\dots,-\infty)$, however this will not be an issue as $(-\infty,\dots,-\infty)$ is always contained in a tropical cone and never a minimal generator.

With this definition, the Tropical Farkas Lemma with infinity \cite[Lemma 28]{JoswigLoho:2016} extends to $\TTmax^{d+1}$.

\begin{lemma} \label{lem:cone+sector+containment}
Let $U \subset \TTmax^{d+1}$.
A point $z \in \TTmax^{d+1}$ is contained in $\tcone{U}$ if and only if for every $i \in [d]_0$, there exists a minimal generator $u$ such that $z \in \widehat{S}_i(u)$.
\end{lemma}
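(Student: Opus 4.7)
The plan is to adapt the proof of the Tropical Farkas Lemma from~\cite{JoswigLoho:2016} to handle the presence of $-\infty$ entries in both generators and test points, following the classical scheme in two directions. Throughout, I may assume without loss of generality that $U$ consists of minimal generators of $\tcone{U}$, since any conic combination can be rewritten using minimal generators.

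For the forward direction, I would start by writing $z = \bigoplus_{j} \lambda_j \odot u^{(j)}$ with $u^{(j)} \in U$ and $\lambda_j \in \TTmax$. For a fixed $i \in [d]_0$, let $j^{*}$ be an index attaining the componentwise max in coordinate $i$, so that $z_i = \lambda_{j^{*}} + u^{(j^{*})}_i$. For every $k \in [d]_0$ the formula $z_k = \max_j (\lambda_j + u^{(j)}_k)$ gives $z_k \geq \lambda_{j^{*}} + u^{(j^{*})}_k$, hence
\[
z_i + u^{(j^{*})}_k \;=\; \lambda_{j^{*}} + u^{(j^{*})}_i + u^{(j^{*})}_k \;\leq\; z_k + u^{(j^{*})}_i \enspace,
\]
which is exactly the condition $z \in \widehat{S}_i(u^{(j^{*})})$. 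The degenerate case is $u^{(j^{*})}_i = -\infty$, where the equation $z_i = \lambda_{j^{*}} + u^{(j^{*})}_i$ forces $z_i = -\infty$; this places $z$ in $\widehat{S}_i(u^{(j^{*})}) = \SetOf{w \in \TTmax^{d+1}}{w_i = -\infty}$ directly from the modified definition given just before the lemma.

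For the backward direction, for each $i \in [d]_0$ pick a minimal generator $u^{(i)} \in U$ with $z \in \widehat{S}_i(u^{(i)})$, and set $\lambda_i := z_i - u^{(i)}_i$ when both entries are finite, $\lambda_i := -\infty$ if $z_i = -\infty$ (using the convention that a term with $\lambda_i = -\infty$ contributes $-\infty$ in every coordinate and may be dropped). I would then verify that $z = \bigoplus_{i \in [d]_0} \lambda_i \odot u^{(i)}$ coordinatewise. For the upper bound, the sector inequality $z_i + u^{(i)}_k \leq z_k + u^{(i)}_i$ rearranges to $\lambda_i + u^{(i)}_k \leq z_k$, so $\max_i (\lambda_i + u^{(i)}_k) \leq z_k$. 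For the matching lower bound at coordinate $k$, take $i = k$: then $\lambda_k + u^{(k)}_k = z_k$ when $u^{(k)}_k \in \RR$, and $z_k = -\infty$ automatically when $u^{(k)}_k = -\infty$ by the sector definition. Thus the two bounds coincide.

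The main obstacle is bookkeeping for the $-\infty$ entries, especially ensuring that the assignment $\lambda_i = z_i - u^{(i)}_i$ still makes the identity $z = \bigoplus_i \lambda_i \odot u^{(i)}$ hold when $u^{(i)}_i = -\infty$ (which via the sector condition forces $z_i = -\infty$), and that the choice of $j^{*}$ in the forward direction remains meaningful when the max is attained at $-\infty$. Once the convention for terms involving $-\infty$ is fixed (dropping them, or equivalently treating $-\infty - (-\infty)$ as the neutral element in the relevant tropical sum), both directions reduce to the finite Farkas-type argument, and the equivalence with~\cite[Lemma~28]{JoswigLoho:2016} is clean.
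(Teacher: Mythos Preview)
Your argument is correct and self-contained: you directly verify the two implications with the standard ``pick the index attaining the maximum'' and ``set $\lambda_i = z_i - u^{(i)}_i$'' moves, tracking the degenerate $-\infty$ cases explicitly. The only slightly implicit point is that $z_i$ finite forces $u^{(i)}_i$ finite (since $\widehat{S}_i(u) = \{w : w_i = -\infty\}$ when $u_i = -\infty$), which you use when defining $\lambda_i$; this is immediate from the modified sector definition and you do allude to it.

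The paper takes a different, more structural route. Rather than redoing the Farkas argument with $-\infty$ bookkeeping, it stratifies $\TTmax^{d+1}$ by the support $I = \{i : z_i \neq -\infty\}$, restricts to the finite stratum $\RR^{|I|}$, and invokes the already-established finite version \cite[Lemma~28]{JoswigLoho:2016} there. The work then shifts to showing $\tcone{U} \cap \RR^{|I|} = \tcone{U^I}$, where $U^I$ is the subset of generators supported in $I$; this is done by observing that any generator with a finite entry outside $I$ must carry coefficient $-\infty$ in a representation of $z$. Your approach buys independence from the cited lemma and keeps everything in one place; the paper's approach is more modular and makes clear that the extension to $\TTmax^{d+1}$ is purely a matter of stratifying by boundary type.
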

\begin{proof}
Fix $I \subseteq [d]_0$ and define
\begin{align*}
\RR^{|I|} &= \SetOf{z \in \TTmax^{d+1}}{z_i \in \RR \ \forall i \in I \ , \ z_k = -\infty \ \forall k \notin I} \\
\TTmax^{|I|} &= \SetOf{z \in \TTmax^{d+1}}{z_k = -\infty \ \forall k \notin I} \enspace .
\end{align*}
Furthermore, we define $U^I = \SetOf{u \in U}{u_k = -\infty \ \forall k \notin I} \subset \TTmax^{|I|}$.
By \cite[Lemma 28]{JoswigLoho:2016}, a point $z \in \RR^{|I|}$ is contained in $\tcone{U^I}$ if and only if for each $i \in I$ there exists $u \in U^I$ such that $\widehat{S}_i(u)$.
Furthermore $z \in \widehat{S}_k(u)$ for all $k \notin I$, extending the result to each $i \in [d]_0$.
It remains to show $\tcone{U^I} = \tcone{U} \cap \RR^{|I|}$.

One containment is straightforward; for the other, consider $z \in \tcone{U} \cap \RR^{|I|}$.
There exists a representation $z = \bigoplus_{u \in U} \lambda_u \odot u$, in particular $z_k = \max\{\lambda_u + u_k\} = -\infty$ for all $k \notin I$.
If $u_k \neq -\infty$, we must have $\lambda_u = -\infty$ and so we can equivalently simply remove $v$ from the representation.
Repeating this, we get a representation of $z$ using just elements from $U_I$.
\end{proof}

\begin{corollary} \label{cor:monomial+cone+sector}
Let $U \subset \TTmax^{d+1}$.
The $i$th monomial tropical cone generated by $U$ is the union of the $i$th sectors of its generators, i.e.,
\[
\monocone{i}{U} = \bigcup_{u \in U} \widehat{S}_i(u)
\]
\end{corollary}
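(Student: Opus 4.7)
The plan is to derive the corollary directly from Lemma~\ref{lem:cone+sector+containment} by a careful case analysis on the sectors of the extra generators $\homogmaxunit \setminus \widehat{e}^{(i)}$.

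For the inclusion $\bigcup_{u \in U}\widehat{S}_i(u) \subseteq \monocone{i}{U}$, I would exhibit an explicit tropical representation. Given $u \in U$ with $u_i \neq -\infty$ and $z \in \widehat{S}_i(u)$, a direct computation shows that
\[
z = (z_i - u_i) \odot u \oplus \bigoplus_{\ell \in [d]_0 \setminus \{i\}} z_\ell \odot \widehat{e}^{(\ell)},
\]
where the defining inequalities $z_i + u_\ell \leq z_\ell + u_i$ of $\widehat{S}_i(u)$ are exactly what is needed to make each coordinate of the right-hand side match $z$. In the degenerate case $u_i = -\infty$, the sector $\widehat{S}_i(u)$ collapses to $\{z : z_i = -\infty\}$, and every such $z$ is trivially a tropical combination of $\homogmaxunit \setminus \widehat{e}^{(i)}$, hence lies in $\monocone{i}{U}$.

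For the reverse inclusion, fix $z \in \monocone{i}{U}$. Applying Lemma~\ref{lem:cone+sector+containment} with index $i$ to the cone $\tcone{U \cup \homogmaxunit \setminus \widehat{e}^{(i)}}$, I obtain a minimal generator $g$ with $z \in \widehat{S}_i(g)$. If $g \in U$ we are done; otherwise $g = \widehat{e}^{(k)}$ for some $k \neq i$. A direct inspection shows that the inequality in $\widehat{S}_i(\widehat{e}^{(k)})$ indexed by $\ell = k$ reads $z_i + 0 \leq z_k + (-\infty)$, forcing $z_i = -\infty$. But then, for any $u \in U$, the defining inequalities of $\widehat{S}_i(u)$ become $-\infty \leq z_\ell + u_i$ and hold trivially, so $z \in \widehat{S}_i(u)$.

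The main subtlety is the bookkeeping around $-\infty$ entries: one has to recognise that a sector degenerates to the boundary stratum $\{z_i = -\infty\}$ whenever the generator has $-\infty$ in the $i$th coordinate, and conversely that a point on this stratum lies in the $i$th sector of every generator. Once this is isolated, the proof reduces to the explicit combination formula above together with one invocation of Lemma~\ref{lem:cone+sector+containment}.
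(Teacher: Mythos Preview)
Your proof is correct. The paper's argument is shorter and uses Lemma~\ref{lem:cone+sector+containment} as a biconditional applied to the generating set $U \cup \homogmaxunit \setminus \widehat{e}^{(i)}$: since $\widehat{S}_k(\widehat{e}^{(k)}) = \TTmax^{d+1}$ for every $k$, the conditions at all indices $k \neq i$ are automatically witnessed by the extra unit vectors, and the containment criterion collapses to the single condition at index $i$. You instead prove the inclusion $\bigcup_{u \in U}\widehat{S}_i(u) \subseteq \monocone{i}{U}$ by an explicit tropical combination rather than via the lemma, and for the reverse inclusion you carry out the case analysis on whether the $i$th-sector witness lies in $U$ or is one of the unit vectors $\widehat{e}^{(k)}$. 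The paper leaves this last step implicit; your treatment makes explicit why a witness $\widehat{e}^{(k)}$ with $k \neq i$ forces $z_i = -\infty$ and hence $z \in \widehat{S}_i(u)$ for any $u \in U$ (which, incidentally, is where nonemptiness of $U$ is used). So the two arguments are close in spirit, but yours trades a second invocation of the lemma for a direct construction and fills in a boundary case the paper skips.
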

\begin{proof}
By definition, the $k$th sector of the $k$th tropical unit vector $\widehat{S}_k(\widehat{e}^{(k)})$ is the whole of $\TTmax^{d+1}$.
Therefore Lemma \ref{lem:cone+sector+containment} reduces to $z$ is contained in $\monocone{i}{U}$ if and only if there exists $u \in U$ such that $z \in \widehat{S}_i(u)$, giving the required equality.
\end{proof}

\begin{proof} (Proof of Proposition \ref{prop:monomial+cone+decomp})
Combining Lemma \ref{lem:cone+sector+containment} and Corollary \ref{cor:monomial+cone+sector} gives the projective version of this statement: for a finite generating set $U \subset \TTmax^{d+1}$ we have
\begin{equation} \label{eq:monomial+cone+intersection}
\tcone{U} = \bigcap_{i \in [d]_0} \monocone{i}{U} \enspace .
\end{equation}
By considering the tropical cone generated by $U = \widehat{V}\cup\widehat{W}$ and dehomogenising, the affine version follows immediately.
\end{proof}

\begin{remark}
Equations \eqref{eq:aff+point+sector} and \eqref{eq:aff+ray+sector} are the dehomogenisation of the projective sectors $\widehat{S}_i((0,v))$ and $\widehat{S}_i((-\infty,w))$ respectively.
This immediately gives affine versions of the statements Lemma \ref{lem:cone+sector+containment} and Corollary \ref{cor:monomial+cone+sector}.
\end{remark}

\section{Conclusion}

\added{We finish with some thoughts on the classification of the combinatorial types of monomial tropical polyhedra and potential applications.}

\subsection{Combinatorial types of monomial tropical polyhedra} \label{sec:combinatorial+types}

\added{
  As discussed in Section~\ref{sec:vertex+facet+lattice}, the vertex-facet incidence graph (Def.~\ref{def:vertex-facet-incidence}) captures the combinatorial type of a monomial tropical polyhedron, in an analogous way to how one captures the combinatorial type of a classical polyhedron.
  However, while the latter depends subtly on the values of the subdeterminants of the generator matrix, the situation is significantly simpler for monomial tropical polyhedra.  
This allows us to define `abstract' monomial tropical polyhedra that are encoded purely combinatorially in the following way.
}
  
The \emph{order pattern} of a matrix $V \in \TTmax^{d \times n}$ is the $d$-tuple of total preorders on $n$ elements represented as a matrix of indeterminates $X = (x_{ij})_{(i,j) \in [d] \times [n]}$ with
\[
x_{ij_1} \leq x_{ij_2} \qquad \Leftrightarrow \qquad v_{ij_1} \leq v_{ij_2} \enspace .
\]
\added{We are mainly interested in those order patterns where the columns actually represent non-redundant generators of a monomial tropical polyhedron. } 
The columns of an order pattern form a \emph{valid generator pattern} if the columns of $X$ form an antichain in the weak partial order defined as the Cartesian product of the preorders in the rows.

\added{
\begin{example}
  The matrix $V =
  \begin{pmatrix}
    2 & 4 & 8 & 4 \\
    6 & -\infty & 2 & 4 \\
    5 & 5 & -1 & 5 \\
  \end{pmatrix}
  $
  gives rise to the three total preorders
  \begin{align*}
    x_{11} < x_{12} = x_{14} < x_{13} \\
    x_{22} < x_{23} < x_{24} < x_{21} \\
    x_{33} < x_{31} = x_{32} = x_{34} \\
  \end{align*}
  This gives rise to a weak partial order on $\SetOf{(x_{1i},x_{2j},x_{3k})}{i,j,k \in [3]}$.
  Here, the columns are pairwise incomparable except for 
  \begin{align*}
    \begin{pmatrix} x_{12} \\ x_{22} \\ x_{32} \end{pmatrix} \leq \begin{pmatrix} x_{14} \\ x_{24} \\ x_{34} \end{pmatrix} \enspace .
  \end{align*}
  Hence, columns $1,3,4$ or $1,2,3$ would form an antichain but all three together, they do not form a valid generator pattern. 
  This is reflected when considering the monomial tropical polyhedron $\monomial{V}$: the point $(4,4,5)$ is not a minimal generator as it dominates $(4, -\infty, 5)$.
\end{example}
}

\added{
Proposition~\ref{prop:facetapex+characterisation} gives criteria for a point to be a facet-apex that depends purely on the order pattern of the generators.
This motivates the following combinatorial abstraction of a facet-apex.
The \emph{pattern type} of a facet-apex $a \in \TTclosed^d$ is the bipartite graph on $[d] \sqcup [n]$ with an edge $(i,j) \in [d] \times [n]$ if and only if $a_i = v_{ij}$ and $v_{kj} < a_k$ for all $k \neq i$. 
Isolated nodes on the side $[d]$ in $P$ correspond to components of $a$ which are $+\infty$. 

Hence, recalling again the definition of vertex-facet incidence graph (Def.~\ref{def:vertex-facet-incidence}), we obtain the following consequence of Proposition~\ref{prop:facetapex+characterisation}.

\begin{corollary}
 The vertex-facet incidence graph of a monomial tropical polyhedron only depends on the order pattern of the generator matrix. 
\end{corollary}

We also note that deformations become very natural when working purely with order patterns.
Explicitly, a deformation in the sense of Definition~\ref{def:deformation+monomial+polyhedron} becomes a \emph{refinement} of the order pattern in that more elements per row are \emph{strictly ordered}.

Recall that the \emph{braid fan} $\cB_d$ in $\RR^d$ is the complete polyhedral fan which is cut out by the hyperplanes $x_p = x_q$ for $p \neq q \in [d]$.
For an introduction on the combinatorics of the braid fan, see~\cite{PostnikovReinerWilliams:2008}.
The stratification of the space of real $(d \times n)$-matrices by their order pattern is the product of the braid fans $\cB_d \times \dots \times \cB_d = \cB_d^n$.
Note that $\TTmax^{d \times n}$ can be stratified by the same set of hyperplanes, resulting in the product of braid fans plus some extra stratification at the boundary.
This means one can consider the space of monomial tropical polyhedra as a subfan of $\cB_d^n$ whose associated order patterns form a valid generator pattern.
Furthermore, the face structure of this fan indicates how one can deform from one monomial tropical polyhedron to another.

}

\subsection{Further Questions}

We conclude with several problems which are motivated from tropical convexity and commutative algebra. 

\begin{question}
Can one characterise the face posets of ordinary polyhedra arising as some face poset of a (generic) monomial tropical polyhedron?
\end{question}
This question goes back to~\cite{BayerPeevaSturmfels:1998} for generic monomial ideals.
In the terminology of orthogonal surfaces, some necessary conditions have been established in~\cite{Kappes:2006,FelsnerKappes:2008} and also the condition on the facet-ridge graph in~\cite{DaechertKlamrothLacourVanderpooten:2017} could be applied. 
Our notion of \emph{pattern type} from Section~\ref{sec:combinatorial+types} could be used to enumerate the finite number of occurring vertex-facet lattices in fixed dimension.
While previously the combinatorial types of tropical polytopes were considered via the secondary fan of products of two simplices through the connection discussed in Remark~\ref{rem:connection+products+simplices}, we propose to restrict to the more tractable fan $\cB_n^d$, the $n$-fold product of the $d$-dimensional braid fan.

Focusing directly on monomial tropical polyhedra without their connection to classical polyhedra leads to the following. 
\begin{question}
  Which atomic and coatomic lattices arise as vertex-facet lattices of monomial tropical polyhedra?
\end{question}
A similar motivation lies at the heart of which atomic lattices arise as the LCM-lattice of a monomial ideal studied in~\cite{Mapes:2013,HeWang:2018,IchimKatthanMoyanoFernandez:2017}. 

\smallskip

Using the combinatorial framework of covector graphs, it is tempting to generalise the face poset constructions to tropical oriented matroids, see~\cite{ArdilaDevelin:2009} for an introduction (where `type' is used instead of covector graph) and~\cite{Loho:2020} for an application to tropical linear programming. 
The pseudovertex poset is defined in terms of the partial ordering on $\TTclosed^d$.
\begin{question}
  Can one derive this partial ordering directly from the graph structure of the covector graphs?
\end{question}
  This would allow one to extend the study of the face posets discussed in Section~\ref{sec:face+posets} and~\ref{sec:monomial+ideals} to general tropical oriented matroids that may not be realisable.

  \smallskip

We finish by returning to monomial ideals and their resolutions.
We saw the Betti poset contains all the essential homological data, but is hard to compute.
However, it is contained in far more computationally amenable posets, in particular the max-min poset.

\begin{question}
Can one derive a resolution of a monomial ideal $I$ from the max-min poset of the monomial tropical polyhedron $\monomial{V_I}$. 
\end{question}

\section{Acknowledgements}
We are grateful to Stefan Felsner and Ezra Miller for enlightening discussions and bringing relevant previous work to our attention.
We are also thankful to Michael Joswig, Kathrin Klamroth, Sara Lamboglia and Lewis Mead for insightful conversations and comments throughout the project.
Finally, we thank the reviewers for helpful suggestions. 
\bibliographystyle{amsplain}
\bibliography{main}

\end{document}